\definecolor{daRemarkgreen}{rgb}{0,0.45,0}
\newcommand{\cat}[1]{\mathbf{#1}}
\DeclareMathAlphabet      {\mathbf}{OT1}{cmr}{b}{n}
\newcommand{\cd}[2][]{\vcenter{\hbox{\xymatrix#1{#2}}}}
\def\matrixobject@{%
   \edef \next@{={\DirectionfromtheDirection@ }}%
   \expandafter \toks@ \next@ \plainxy@
   \let\xy@@ix@=\xyq@@toksix@
   \xyFN@ \OBJECT@}
\let\xy@entry@@norm=\entry@@norm
\def\entry@@norm@patched{%
   \let\object@=\matrixobject@
   \xy@entry@@norm }
\renewcommand{\phi}{\varphi}
\newcommand{\A}{{\mathcal A}}
\newcommand{\B}{{\mathcal B}}
\newcommand{\C}{{\mathcal C}}
\newcommand{\D}{{\mathcal D}}
\newcommand{\W}{{\mathcal W}}
\newcommand{\xtor}[1]{\cdl[@1]{{} \ar[r]|-{\object@{|}}^{#1} & {}}}
\newcommand{\setmanuallabel}[1]{\stepcounter{equation}{\edef\@currentlabel{\theequation}\label{#1}}}
\newcommand{\printmanuallabel}[1]{\stepcounter{equation}\text{(\theequation)}}
\def\hookleftarrowfill@{\arrowfill@\leftarrow\relbar{\relbar\joinrel\rhook}}
\def\twoheadleftarrowfill@{\arrowfill@\twoheadleftarrow\relbar\relbar}
\def\leftbararrowfill@{\arrowdoublefill@{\leftarrow\mkern-5mu}\relbar\mapstochar\relbar\relbar}
\def\Leftbararrowfill@{\arrowdoublefill@{\Leftarrow\mkern-2mu}\Relbar\Mapstochar\Relbar\Relbar}
\def\leftringarrowfill@{\arrowdoublefill@{\leftarrow\mkern-3mu}\relbar{\mkern-3mu\circ\mkern-2mu}\relbar\relbar}
\def\lefttriarrowfill@{\arrowfill@{\mathrel\triangleleft\mkern0.5mu\joinrel\relbar}\relbar\relbar}
\def\Lefttriarrowfill@{\arrowfill@{\mathrel\triangleleft\mkern1mu\joinrel\Relbar}\Relbar\Relbar}
\def\hookrightarrowfill@{\arrowfill@{\lhook\joinrel\relbar}\relbar\rightarrow}
\def\twoheadrightarrowfill@{\arrowfill@\relbar\relbar\twoheadrightarrow}
\def\rightbararrowfill@{\arrowdoublefill@{\relbar\mkern-0.5mu}\relbar\mapstochar\relbar\rightarrow}
\def\Rightbararrowfill@{\arrowdoublefill@{\Relbar\mkern-2mu}\Relbar\Mapstochar\Relbar\Rightarrow}
\def\rightringarrowfill@{\arrowdoublefill@\relbar\relbar{\mkern-2mu\circ\mkern-3mu}\relbar{\mkern-3mu\rightarrow}}
\def\righttriarrowfill@{\arrowfill@\relbar\relbar{\relbar\joinrel\mkern0.5mu\mathrel\triangleright}}
\def\Righttriarrowfill@{\arrowfill@\Relbar\Relbar{\Relbar\joinrel\mkern1mu\mathrel\triangleright}}
\def\leftrightarrowfill@{\arrowfill@\leftarrow\relbar\rightarrow}
\def\mapstofill@{\arrowfill@{\mapstochar\relbar}\relbar\rightarrow}
\renewcommand*\xleftarrow[2][]{\ext@arrow 20{20}0\leftarrowfill@{#1}{#2}}
\providecommand*\xLeftarrow[2][]{\ext@arrow 60{22}0{\Leftarrowfill@}{#1}{#2}}
\providecommand*\xhookleftarrow[2][]{\ext@arrow 10{20}0\hookleftarrowfill@{#1}{#2}}
\providecommand*\xtwoheadleftarrow[2][]{\ext@arrow 60{20}0\twoheadleftarrowfill@{#1}{#2}}
\providecommand*\xleftbararrow[2][]{\ext@arrow 10{22}0\leftbararrowfill@{#1}{#2}}
\providecommand*\xLeftbararrow[2][]{\ext@arrow 50{24}0\Leftbararrowfill@{#1}{#2}}
\providecommand*\xleftringarrow[2][]{\ext@arrow 10{26}0\leftringarrowfill@{#1}{#2}}
\providecommand*\xlefttriarrow[2][]{\ext@arrow 80{24}0\lefttriarrowfill@{#1}{#2}}
\providecommand*\xLefttriarrow[2][]{\ext@arrow 80{24}0\Lefttriarrowfill@{#1}{#2}}
\renewcommand*\xrightarrow[2][]{\ext@arrow 01{20}0\rightarrowfill@{#1}{#2}}
\providecommand*\xRightarrow[2][]{\ext@arrow 04{22}0{\Rightarrowfill@}{#1}{#2}}
\providecommand*\xhookrightarrow[2][]{\ext@arrow 00{20}0\hookrightarrowfill@{#1}{#2}}
\providecommand*\xtwoheadrightarrow[2][]{\ext@arrow 03{20}0\twoheadrightarrowfill@{#1}{#2}}
\providecommand*\xrightbararrow[2][]{\ext@arrow 01{22}0\rightbararrowfill@{#1}{#2}}
\providecommand*\xRightbararrow[2][]{\ext@arrow 04{24}0\Rightbararrowfill@{#1}{#2}}
\providecommand*\xrightringarrow[2][]{\ext@arrow 01{26}0\rightringarrowfill@{#1}{#2}}
\providecommand*\xrighttriarrow[2][]{\ext@arrow 07{24}0\righttriarrowfill@{#1}{#2}}
\providecommand*\xRighttriarrow[2][]{\ext@arrow 07{24}0\Righttriarrowfill@{#1}{#2}}
\providecommand*\xmapsto[2][]{\ext@arrow 01{20}0\mapstofill@{#1}{#2}}
\providecommand*\xleftrightarrow[2][]{\ext@arrow 10{22}0\leftrightarrowfill@{#1}{#2}}
\providecommand*\xLeftrightarrow[2][]{\ext@arrow 10{27}0{\Leftrightarrowfill@}{#1}{#2}}
\newcommand{\twocong}[2][0.5]{\ar@{}[#2] \save ?(#1)*{\cong}\restore}
\newcommand{\twoeq}[2][0.5]{\ar@{}[#2] \save ?(#1)*{=}\restore}
\newcommand{\rtwocell}[3][0.5]{\ar@{}[#2] \ar@{=>}?(#1)+/l 0.2cm/;?(#1)+/r 0.2cm/^{#3}}
\newcommand{\ltwocell}[3][0.5]{\ar@{}[#2] \ar@{=>}?(#1)+/r 0.2cm/;?(#1)+/l 0.2cm/^{#3}}
\newcommand{\ltwocello}[3][0.5]{\ar@{}[#2] \ar@{=>}?(#1)+/r 0.2cm/;?(#1)+/l 0.2cm/_{#3}}
\newcommand{\dtwocell}[3][0.5]{\ar@{}[#2] \ar@{=>}?(#1)+/u  0.2cm/;?(#1)+/d 0.2cm/^{#3}}
\newcommand{\dltwocell}[3][0.5]{\ar@{}[#2] \ar@{=>}?(#1)+/ur  0.2cm/;?(#1)+/dl 0.2cm/^{#3}}
\newcommand{\drtwocell}[3][0.5]{\ar@{}[#2] \ar@{=>}?(#1)+/ul  0.2cm/;?(#1)+/dr 0.2cm/^{#3}}
\newcommand{\dthreecell}[3][0.5]{\ar@{}[#2] \ar@3{->}?(#1)+/u  0.2cm/;?(#1)+/d 0.2cm/^{#3}}
\newcommand{\utwocell}[3][0.5]{\ar@{}[#2] \ar@{=>}?(#1)+/d 0.2cm/;?(#1)+/u 0.2cm/_{#3}}
\newcommand{\dtwocelltarg}[3][0.5]{\ar@{}#2 \ar@{=>}?(#1)+/u  0.2cm/;?(#1)+/d 0.2cm/^{#3}}
\newcommand{\utwocelltarg}[3][0.5]{\ar@{}#2 \ar@{=>}?(#1)+/d  0.2cm/;?(#1)+/u 0.2cm/_{#3}}
\newcommand{\h}[1]{#1}
\theoremstyle{definition}
\newtheorem*{AxiomM}{Axiom M}
\newtheorem*{AxiomC}{Axiom C}
\newtheorem*{AxiomMC}{Axiom MC}
\numberwithin{equation}{section}
\theoremstyle{plain}
\newtheorem{Theorem}{Theorem}[section]
\newtheorem*{thank}{Acknowledgments}
\newtheorem{Proposition}[Theorem]{Proposition}
\newtheorem{Lemma}[Theorem]{Lemma}
\theoremstyle{definition}
\newtheorem{Definition}[Theorem]{Definition}
\newtheorem{Not}[Theorem]{Notation}
\newtheorem{Var}[Theorem]{Variants}
\newtheorem{Remark}[Theorem]{Remark}
\newcommand{\f}[1]{\mathbf #1}
\newcommand{\atwo}{{\mathbf 2}}
\newcommand{\Perm}{\textnormal{Perm}\xspace}
\newcommand{\SMon}{\textnormal{SMon}\xspace}
\newcommand{\DLim}{\mathbf{D}\textnormal{-Lim}}
\newcommand{\DLIM}{\mathbf{D}\textnormal{-}\mathbf{Lim}}
\newcommand{\Cat}{{\cat{Cat}}}
\newcommand{\CatGph}{{\cat{Cat}\textnormal{-Gph}}}
\newcommand{\CAT}{{\cat{CAT}}}
\newcommand{\Set}{{\cat{Set}}}
\newcommand{\TALG}{\mathbf{T}\textnormal{-}\mathbf{Alg}}
\newcommand{\TAlg}{\textnormal{T-Alg}}
\newcommand{\TAlgs}{\textnormal{T-Alg}_{s}}
\newcommand{\TAlgsc}{\textnormal{(T-Alg}_{s})_{c}}
\newcommand{\twocat}{2\textnormal{-Cat}}
\newcommand{\TWOCAT}{\mathbf{2}\textnormal{-}\mathbf{Cat}}
\newcommand{\Bicat}{\textnormal{Bicat}}
\newcommand{\Icon}{\textnormal{Icon}}
\newcommand{\BICAT}{\mathbf{Bicat}}
\begin{document}
\leftmargini=2em \title{Skew structures in 2-category theory and homotopy theory}
\author{John Bourke}
\address{Department of Mathematics and Statistics, Masaryk University, Kotl\'a\v rsk\'a 2, Brno 60000, Czech Republic}
\email{bourkej@math.muni.cz}
\subjclass[2000]{Primary: 18D10, 55U35}
\date{\today}

\maketitle
\begin{abstract}
We study Quillen model categories equipped with a monoidal skew closed structure that descends to a genuine monoidal closed structure on the homotopy category.  Our examples are 2-categorical and include permutative categories and bicategories.  Using the skew framework, we adapt Eilenberg and Kelly's theorem relating monoidal and closed structure to the homotopical setting.  This is applied to the construction of monoidal bicategories arising from the pseudo-commutative 2-monads of Hyland and Power.
\end{abstract}

\section{Introduction}
The notion of a monoidal closed category captures the behaviour of the tensor product and internal hom on classical categories such as those of sets and vector spaces.  Some of the basic facts about monoidal closed categories have an intuitive meaning.  For instance, the isomorphism
\begin{equation}
\C(A,B) \cong \C(I,[A,B])
\end{equation}
says that elements of the internal hom $[A,B]$ are the same thing as morphisms $A \to B$.\\
Recently some new variants have come to light.  Firstly, the \emph{skew} monoidal categories of Szlach\'{a}nyi \cite{Szlachanyi2012Skew-monoidal} in which the structure maps such as $(A \otimes B) \otimes C \to A \otimes (B \otimes C)$ have a specified orientation and are not necessarily invertible. Shortly afterwards the dual notion of a skew closed category was introduced by Street \cite{Street2013Skew}.  Here one has a canonical map
\begin{equation}
\C(A,B) \to \C(I,[A,B])
\end{equation}
but this need not be invertible. Intuitively, we might view this relaxation as saying that $[A,B]$ should contain the morphisms $A \to B$ as elements, but possibly something else too.\\
In the present paper a connection is drawn between skew structures and homotopy theory.  We study examples of Quillen model categories $\C$ in which the correct internal homs $[A,B]$ have more general \emph{weak maps} $A \rightsquigarrow B$ as elements.  By the above reasoning these examples are necessarily skew. These skew closed categories form part of enveloping monoidal skew closed structures that descend to the homotopy category $Ho(\C)$ where, in fact, they yield \emph{genuine} monoidal closed structures.  The study of skew structures on a category that induce genuine structures on the homotopy category is our main theme.\\
Our examples are 2-categorical in nature -- most involve tweaking better known weak 2-categorical structures to yield not strict, but skew, structures.  For example, we describe a monoidal skew closed structure on the 2-category of permutative categories -- symmetric strict monoidal categories -- and strict maps.  This contains, on restricting to the cofibrant objects, a copy of the well known monoidal bicategory of permutative categories and strong maps.  More generally, we describe a skew structure for each pseudo-commutative 2-monad $T$ on $\Cat$ in the sense of \cite{Hyland2002Pseudo}.  Other examples concern 2-categories and bicategories.\\ 
The theory developed in the present paper has a future goal, concerning Gray-categories, in mind.  It was shown in \cite{Bourke2015A-cocategorical} that there exists no homotopically well behaved monoidal biclosed structure on the category of Gray-categories.  The plan is, in a future paper, to use the results developed here to understand the correct enriching structure on the category of Gray-categories.\\
Let us now give an overview of the paper.	Section 2 is mainly background on skew monoidal, skew closed and monoidal skew closed categories.  We recall Street's theorem describing the perfect correspondence between skew monoidal structures $(\C,\otimes,I)$ and skew closed structures $(\C,[-,-],I)$ in the presence of adjointness isomorphisms $\C(A \otimes B,C) \cong \C(A,[B,C])$.  In Theorem~\ref{Theorem:EK} we reformulate Eilenberg and Kelly's theorem \cite{Eilenberg1966Closed}, relating monoidal and closed structure, in the skew language.  Finally, we introduce symmetric skew closed categories.\\
It turns out that all the examples of skew closed structures that we meet in the present paper can be seen as arising from certain multicategories in a canonical way.  In Section 3 we describe the passage from such multicategories to skew closed categories.\\
Using the multicategory approach where convenient, Section 4 gives concrete examples of some of the skew closed structures that we are interested in.  We describe the examples of categories with limits, permutative categories, 2-categories and bicategories.\\
Section 5 concerns the interaction between skew structures and Quillen model structures that lies at the heart of the paper.  We begin by describing how a skew monoidal structure $(\C,\otimes,I)$ can be \emph{left derived} to the homotopy category.  This is the skew version of Hovey's construction \cite{Hovey1999Model}.  We call $(\C,\otimes,I)$ \emph{homotopy monoidal} if the left derived structure $(Ho(\C),\otimes_{l},I)$ is genuinely monoidal.  This is complemented by an analysis of how skew closed structure can be \emph{right derived} to the homotopy category, and we obtain a corresponding notion of \emph{homotopy closed} category.  Combining these cases Theorem~\ref{Theorem:TotalDerived} describes how monoidal skew closed structure can be \emph{derived} to the homotopy category.  This is used to prove Theorem~\ref{Theorem:hEK}, a homotopical analogue of Eilenberg and Kelly's theorem, which allows us to recognise homotopy monoidal structure in terms of homotopy closed structure.\\
Section 6 returns to the examples of categories with limits and permutative categories in the more general setting of pseudo-commutative 2-monads $T$ on $\Cat$.  We make minor modifications to Hyland and Power's construction \cite{Hyland2002Pseudo} of a pseudo-closed structure on $\TAlg$ to produce a skew closed structure on the 2-category $\TAlgs$ of algebras and strict morphisms.  For accessible $T$ this forms part of an enveloping monoidal skew closed structure which, using Theorem~\ref{Theorem:hEK}, we show to be homotopy monoidal.  Using this, we give a complete construction of the monoidal bicategory structure on $\TAlg$ of Hyland and Power -- thus solving a problem of \cite{Hyland2002Pseudo}.\\
Section 7 consists of an in-depth analysis of the skew structure on the category of bicategories and strict homomorphisms.  Though not particularly interesting in its own right, we regard this example as a preliminary to future work in higher dimensions.

\begin{thank}
The author thanks the organisers of the Cambridge Category Theory Seminar and of CT2015 in Aveiro for providing the opportunity to present this work, and thanks Sofie Royeaerd for useful feedback on an early draft.
\end{thank}

\section{Skew monoidal and skew closed categories}
\subsection{Skew monoidal categories}
Skew monoidal categories were introduced by Szlach\'{a}nyi \cite{Szlachanyi2012Skew-monoidal} in the study of bialgebroids over rings.  There are left and right versions (depending upon the orientation of the associativity and unit maps)
and it is the left handed case that is of interest to us.
\begin{Definition}
A (left) skew monoidal category $(\C,\otimes,I,\alpha,l,r)$ is a category $\C$ together with a functor $\otimes:\C \times \C \to \C$, a unit object $I \in \C$, and natural families $\alpha_{A,B,C}:(A \otimes B) \otimes C \to A \otimes (B \otimes C)$, $l_{A}:I \otimes A \to A$ and $r_{A}:A \to A \otimes I$ satisfying five axioms \cite{Szlachanyi2012Skew-monoidal}.
\end{Definition}
There is no need for us to reproduce these five axioms here as we will not use them.  We remark only that they are neatly labelled by the five words 
\begin{center}
$abcd$\\
$aib$ $aib$ $abi$ \\
$ii$
\end{center}
of which the first  refers to MacLane's pentagon axiom.\\
Henceforth the term skew monoidal is taken to mean left skew monoidal.  A monoidal category is precisely a skew monoidal category in which the constraints $\alpha, l$ and $r$ are invertible.
\subsection{Skew closed categories}
In the modern treatment of monoidal closed categories as a basis for enrichment \cite{Kelly1982Basic} it is the monoidal structure that is typically treated as primitive.  Nonetheless, the first major treatment \cite{Eilenberg1966Closed} emphasised the closed structure, presumably because internal homs are often more easily described than the corresponding tensor products.  In the examples of interest to us (see Section~\ref{section:examples}) this is certainly the case.  These examples will not be closed in the sense of \emph{ibid.}  but only skew closed.
\begin{Definition}[Street \cite{Street2013Skew}]
A (left) skew closed category $(\C,[-,-],I,L,i,j)$ consists of a category $\C$ equipped with a bifunctor $[-,-]:\C^{op} \times \C \to \C$ and unit object $I$ together with
\begin{enumerate}
\item components $L=L^{A}_{B,C}:[B,C] \to [[A,B],[A,C]]$ natural in $B,C$ and extranatural in $A$,
\item a natural transformation $i=i_{A}:[I,A] \to A$, 
\item components $j=j_{A}:I \to [A,A]$ extranatural in $A$,
\end{enumerate}
satisfying the following five axioms.
\begin{equation*}\tag{C1}
\begin{aligned}
\cd{& [[A,C],[A,D]] \ar[rd]^-{ L}  & \\
[C,D] \ar[ru]^-{L} \ar[d]_-{L} & & [[[A,B],[A,C]],[[A,B],[A,D]]] \ar[d]^-{[L,1]} \\
[[B,C],[B,D]] \ar[rr]_-{[1,L]} & & [[B,C],[[A,B],[A,D]]] }
\end{aligned}
\end{equation*}.

$$\xy
(0,0)*+{(C2)}="00"; (20,0)*+{ [[A,A],[A,C]]}="10"; (50,0)*+{[I,[A,C]]}="20";(70,0)*+{(C3)}="30"; (83,0)*+{[B,B]}="40"; (117,0)*+{[[A,B],[A,B]]}="50"; 
(20,-15)*+{[A,C]}="01"; (50,-15)*+{[A,C]}="11"; (100,-15)*+{I}="12";
{\ar^{1} "01"; "11"};{\ar^{L} "01"; "10"};{\ar^{i} "20"; "11"};
{\ar^{[j,1]} "10"; "20"};{\ar^{L} "40"; "50"};
{\ar^{j} "12"; "40"};{\ar_{j} "12"; "50"};
\endxy$$

$$\xy
(0,0)*+{(C4)}="00"; (15,0)*+{[B,C]}="10"; (50,0)*+{[[I,B],[I,C]]}="20";(69,0)*+{(C5)}="30"; (85,0)*+{I}="40"; (122,0)*+{[I,I]}="50"; 
(30,-15)*+{[[I,B],C]}="01"; (103,-15)*+{I}="11";
{\ar^{L} "10"; "20"};{\ar^{j} "40"; "50"};
{\ar_{[i,1]} "10"; "01"};{\ar^{[1,i]} "20"; "01"};
{\ar_{1} "40"; "11"};{\ar^{i} "50"; "11"};
\endxy$$
\end{Definition}

$(\C,[-,-],I)$ is said to be \emph{left normal} when the composite function 
\begin{equation}
\cd{\C{(A,B)} \ar[rr]^{[A,-]} & & \C({[A,A],[A,B])} \ar[rr]^{\C(j,1)} & & \C{(I,[A,B])}}
\end{equation}
is invertible, and \emph{right normal} if $i:[I,A] \to A$ is invertible.  A \emph{closed category} is, by definition, a skew closed category which is both left and right normal.\begin{footnote}{ The original definition of closed category \cite{Eilenberg1966Closed} involved an \emph{underlying functor} to $\Set$.  We are using the modified definition of \cite{Street1974Elementary} (see also \cite{Laplaza1977Embedding}) which eliminates the reference to $\Set$.}\end{footnote}
\begin{Var}\label{Theorem:Var1}
We will regularly mention a couple of variants on the above definition and we note them here.
\begin{enumerate}
\item We will sometimes consider skew closed 2-categories: the $\Cat$-enriched version of the above concept.  The difference is that $\C$ is now a \emph{2-category}, $[-,-]$ a \emph{2-functor} and each of the three transformations \emph{2-natural} in each variable.
\item We call a structure $(\C,[-,-],L)$ satisfying $C1$ but without unit a \emph{semi-closed} category.
\end{enumerate}
\end{Var}


\subsection{The correspondence between skew monoidal and skew closed categories}\label{section:correspondence}
A monoidal category $(\C,\otimes,I)$ in which each functor $- \otimes A:\C \to \C$ has a right adjoint $[A,-]$ naturally gives rise to the structure $(\C,[-,-],I)$ of a closed category.  Counterexamples to the converse statement are described in Section 3 of \cite{Day1978On}: no closed category axiom ensures the associativity of the corresponding tensor product.  An appealing feature of the skew setting is that there is a perfect correspondence between skew monoidal and skew closed structure.
\begin{Theorem}[Street \cite{Street2013Skew}]
Let $\C$ be a category equipped with an object $I$ and a pair of bifunctors $\otimes:\C \times \C \to \C$ and $[-,-]:\C^{op} \times \C \to \C$ related by isomorphisms $\phi:\C(A \otimes B,C) \cong \C(A,[B,C])$ natural in each variable.  There is a bijection between extensions of $(\C,\otimes,I)$ to a skew monoidal structure and of $(\C,[-,-],I)$ to a skew closed structure.
\end{Theorem}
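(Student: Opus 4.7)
The plan is to construct the correspondence by transporting structure across the given adjunction $\phi$, and then verifying that each of the five axioms on one side is equivalent to one of the five axioms on the other.

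Starting from a skew monoidal structure $(\C,\otimes,I,\alpha,l,r)$, write $\epsilon_{B,A}:[B,A]\otimes B \to A$ for the counit of $-\otimes B \dashv [B,-]$ corresponding to $\phi$. I would then define the candidate skew closed data by setting $j_A := \phi(l_A):I \to [A,A]$; defining $i_A:[I,A]\to A$ as the composite $[I,A] \xrightarrow{r_{[I,A]}} [I,A]\otimes I \xrightarrow{\epsilon_{I,A}} A$; and defining $L^A_{B,C}:[B,C] \to [[A,B],[A,C]]$ as the twofold $\phi$-transpose of the composite
\[
([B,C]\otimes[A,B])\otimes A \xrightarrow{\alpha} [B,C]\otimes([A,B]\otimes A) \xrightarrow{1\otimes\epsilon_{A,B}} [B,C]\otimes B \xrightarrow{\epsilon_{B,C}} C.
\]
Conversely, given a skew closed structure, I recover $l_A := \phi^{-1}(j_A)$, and obtain $r$ and $\alpha$ by transposing $i$ and $L$ across $\phi$. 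That the two passages are mutually inverse is a routine application of the Yoneda lemma and the triangle identities for $\phi$; the naturality and extranaturality conditions on $(L,i,j)$ correspond, via naturality of $\phi$, to the naturality conditions on $(\alpha,l,r)$, so this part of the bijection is essentially formal.

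The main content is to verify that each skew closed axiom (C1)--(C5) is equivalent to one of the five skew monoidal axioms. The recipe is the same in each case: apply $\phi^{-1}$ repeatedly to convert the relevant diagram in $\C$ into a diagram whose morphisms are built from $\alpha$, $l$, $r$, $\epsilon$ and identities, and then use the triangle identities and naturality of $\phi$ to simplify. Following the author's mnemonic, one expects (C1) -- which involves four instances of $L$ -- to reduce to the pentagon $abcd$; (C2), (C3), (C4), each mixing $L$ with $i$ or $j$, to yield the three mixed unit axioms $aib$, $aib$, $abi$; and (C5), involving only $i$ and $j$, to yield the degenerate $ii$ axiom. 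The main obstacle is purely bookkeeping: the diagrams that arise after transposition are large, with multiple instances of $\epsilon$ proliferating in the transposed composites for $L$, and some care is needed to organise the calculation so that each closed axiom matches its monoidal counterpart on the nose rather than up to some stray triangle identity.
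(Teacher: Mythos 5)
Your construction agrees with what the paper records: the paper does not itself prove this theorem (it is quoted from \cite{Street2013Skew}) but describes exactly the data correspondence you set up, and your $j_A=\phi(l_A)$, $i_A=\epsilon_{I,A}\circ r_{[I,A]}$ and $L$ as the double transpose of $\epsilon\circ(1\otimes\epsilon)\circ\alpha$ are precisely the inverses of the formulas \eqref{eq:leftunit}, \eqref{eq:rightunit} and \eqref{eq:ass}/\eqref{eq:t} by which Section~\ref{section:correspondence} passes from the closed to the monoidal side. The axiom-by-axiom matching you anticipate (C1 with the pentagon, C2--C4 with the three mixed unit axioms, C5 with $ii$) is exactly the content of Street's proof and is consistent with the paper's mnemonic labelling of the five skew monoidal axioms, so your outline is correct; like the paper, you defer the transposition bookkeeping to \cite{Street2013Skew} rather than carrying it out.
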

Our interest is primarily in the passage from the closed to the monoidal side and, breaking the symmetry slightly, we describe it now: for the full symmetric treatment see \cite{Street2013Skew}.\\
\begin{itemize}
\item $l:I \otimes A \to A$ is the unique map such that the diagram
\begin{equation}\label{eq:leftunit}
\cd{\C(A,B) \ar[drr]_{v} \ar[rr]^{\C(l,1)} && \C(I \otimes A,B) \ar[d]^{\phi}\\
&& \C(I,[A,B])}
\end{equation}
commutes for all $B$.  Here $v=\C(j,1) \circ [A,-]:\C(A,B) \to \C(I,[A,B])$ is the morphism defining left normality.  \emph{In particular $l$ is invertible for each $A$ just when $v$ is.}
\item
$r:A \to A \otimes I$ is the unique morphism such that the diagram
\begin{equation}\label{eq:rightunit}
 \cd{\C(A \otimes I,B) \ar[d]_{\phi} \ar[rr]^{\C(r,1)} && \C(A,B)\\
\C(A,[I,B]) \ar[urr]_{\C(1,i)}}
\end{equation}
commutes for all $B$.  \emph{In particular $r$ is invertible for each $A$ just when $i$ is.}
\item Transposing the identity through the isomorphism $\phi:\C(A \otimes B,A \otimes B) \cong \C(A,[B,A \otimes B])$ yields a morphism $u:A \to [B,A \otimes B]$ natural in each variable.  Write $t:[A \otimes B,C] \to [A,[B,C]]$ for the composite
\begin{equation}\label{eq:t}
\cd{[A \otimes B,C] \ar[rr]^{L} && [[B,A \otimes B],[B,C]] \ar[rr]^{[u,1]} && [A,[B,C]]}
\end{equation}
which, we note, is natural in each variable.  The constraint 
$\alpha:(A \otimes B) \otimes C \to A \otimes (B \otimes C)$ is the unique morphism rendering commutative the diagram
\begin{equation}\label{eq:ass}
\cd{
\C(A \otimes (B \otimes C),D) \ar[dd]_{\phi} \ar[rr]^{\C(\alpha,1)} && \C((A \otimes B) \otimes C,D) \ar[d]^{\phi}\\
&& \C(A \otimes B,[C,D]) \ar[d]^{\phi}\\
\C(A,[B \otimes C,D]) \ar[rr]_{\C(1,t)} && \C(A,[B,[C,D]])
}
\end{equation}
for all $D$.  \emph{In particular $\alpha$ is invertible just when $t$ is.}
\end{itemize}
\begin{Definition}
A monoidal skew closed category consists of a skew monoidal category $(\C,\otimes,I,\alpha,l,r)$ and skew closed category $(\C,[-,-],I,L,i,j)$, together with natural isomorphisms $\phi:\C(A \otimes B,C) \cong \C(A,[B,C])$ all related by the above equations. 
\end{Definition}
Of course in the presence of the isomorphisms either bifunctor determines the other.  Accordingly a monoidal skew closed category is determined by either the skew monoidal or closed structure together with the isomorphisms $\phi$.\\
We remark that monoidal skew closed structures on the category of left $R$-modules over a ring $R$ that have $R$ as unit correspond to left bialgebroids over $R$.  This was the reason for the introduction of skew monoidal categories in \cite{Szlachanyi2012Skew-monoidal}.\\
The following result -- immediate from the above -- is, minus the skew monoidal terminology, contained within Chapter 2 and in particular Theorem 5.3 of \cite{Eilenberg1966Closed}.
\begin{Theorem}[Eilenberg-Kelly]\label{Theorem:EK}
Let $(\C,\otimes,[-,-],I)$ be a monoidal skew closed category. Then $(\C,\otimes,I)$ is monoidal if and only if $(\C, [-,-],I)$ is closed and the transformation $t:[A \otimes B,C] \to [A,[B,C]]$ 
is an isomorphism for all $A,B$ and $C$.
\end{Theorem}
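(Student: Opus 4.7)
The plan is to observe that the theorem is essentially an immediate corollary of the three italicised invertibility equivalences stated in the bullet points of Section~\ref{section:correspondence}, which set up the correspondence between skew monoidal and skew closed structures. No new diagram chases are required: each of the three structural constraints on the monoidal side is invertible precisely when a specific data item on the closed side is.

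More precisely, first I would recall that a skew monoidal category $(\C,\otimes,I,\alpha,l,r)$ is monoidal exactly when all three families $\alpha$, $l$ and $r$ are invertible, and that $(\C,[-,-],I)$ is closed exactly when it is left and right normal, i.e.\ when the comparisons $v\ti\C(A,B)\to\C(I,[A,B])$ and $i\ti[I,A]\to A$ are invertible for all objects. Then I would invoke three facts already established by the equations \eqref{eq:leftunit}, \eqref{eq:rightunit} and \eqref{eq:ass} which define $l$, $r$ and $\alpha$ via the isomorphism $\phi$: (a) $l_A$ is invertible for all $A$ iff $v$ is invertible for all $A,B$, i.e.\ iff $(\C,[-,-],I)$ is left normal; (b) $r_A$ is invertible for all $A$ iff $i_A$ is invertible for all $A$, i.e.\ iff $(\C,[-,-],I)$ is right normal; and (c) $\alpha_{A,B,C}$ is invertible for all $A,B,C$ iff $t_{A,B,C}\ti[A\otimes B,C]\to[A,[B,C]]$ is invertible for all $A,B,C$. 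Each of these follows because the relevant diagram expresses the corresponding map as (an image under $\C(-,D)$ of) a map obtained by applying a natural isomorphism in one variable and then pre-composing with an isomorphism on the other side; by the Yoneda lemma applied to the functor $\C(-,D)$, the original morphism is invertible precisely when the comparison on the right hand side is.

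Combining the three equivalences yields the theorem: $(\C,\otimes,I)$ is monoidal, i.e.\ $\alpha$, $l$ and $r$ are all invertible, iff $v$, $i$ and $t$ are all invertible, iff $(\C,[-,-],I)$ is closed (left plus right normal) and $t$ is an isomorphism.

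The only conceptual step that is not a mere bookkeeping of the bullet list is the Yoneda-style deduction, from equations \eqref{eq:leftunit}, \eqref{eq:rightunit} and \eqref{eq:ass}, that invertibility of the defining constraint is equivalent to invertibility of its closed-side counterpart; this is the point where one must be a little careful, but since the diagrams in question are natural in the representing variable $B$ (respectively $D$), it is routine. No genuine obstacle arises, which is why the authors describe the result as immediate.
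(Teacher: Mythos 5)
Your proposal is correct and matches the paper's own treatment: the paper explicitly notes that the theorem is ``immediate from the above,'' namely from the three italicised equivalences in the bullet points of Section~\ref{section:correspondence} stating that $l$, $r$ and $\alpha$ are invertible precisely when $v$, $i$ and $t$ are, respectively. Your Yoneda-style justification of those equivalences is the right (and standard) way to make the word ``immediate'' precise.
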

Eilenberg and Kelly's theorem can be used to recognise monoidal structure in terms of closed structure.  However it can be difficult to determine whether $t:[A \otimes B,C] \to [A,[B,C]]$ is invertible.  This difficulty disappears in the presence of a suitable symmetry.

\subsection{Symmetry}
A symmetry on a skew closed category begins with a natural isomorphism $s:[A,[B,C]] \cong [B,[A,C]]$.  If $\C$ is left normal the vertical maps
\begin{equation*}
\cd{
\C(A,[B,C]) \ar[d]_{v} \ar[rr]^{s_{0}} && \C(B,[A,C]) \ar[d]^{v} \\
\C(I,[A,[B,C]]) \ar[rr]^{\C(I,s)} && \C(I,[B,[A,C]])}
\end{equation*}
are isomorphisms, so that we obtain an isomorphism $s_{0}$ by conjugating $\C(I,s)$.  If $\C$ underlies a monoidal skew closed category this in turn gives rise to a natural isomorphism $$\C(A \otimes B,C) \cong \C(B \otimes A,C)$$ and so, by Yoneda, a natural isomorphism 
\begin{equation}
c:B \otimes A \cong A \otimes B \hspace{0.3cm} .
\end{equation}
Our leading examples of skew closed categories do admit symmetries, but are \emph{not} left normal: accordingly, the symmetries are visible on the closed side but not on the monoidal side.  However they often reappear on the monoidal side upon passing to the \emph{homotopy category} -- see Theorem~\ref{Theorem:hEKs}.
\begin{Definition}
A symmetric skew closed category consists of a skew closed category $(\C,[-,-],I)$ together with a natural isomorphism $s:[A,[B,C]] \cong [B,[A,C]]$ satisfying the following four equations.
\begin{equation}\tag{S1}
\cd{[A,[B,C]] \ar[dr]_{s} \ar[rr]^{1} & & [A,[B,C]]\\
& [B,[A,C]] \ar[ur]_{s}}
\end{equation}
\begin{equation}\tag{S2}
\cd{[A,[B,[C,D]]] \ar[d]_{[1,s]} \ar[r]^{s} & [B,[A,[C,D]]] \ar[r]^{[1,s]} & [B,[C,[A,D]]] \ar[d]^{s}\\
[A,[C,[B,D]]] \ar[r]^{s} & [C,[A,[B,D]]] \ar[r]^{[1,s]} & [C,[B,[A,D]]]}
\end{equation}
\begin{equation}\tag{S3}
\cd{[A,[B,C]] \ar[d]_{s} \ar[r]^<<<{L} & [[D,A],[D,[B,C]]] \ar[r]^{[1,s]} & [[D,A],[B,[D,C]]]  \ar[d]^{s}\\
[B,[A,C]] \ar[rr]^{[1,L]} & & [B,[[D,A],[D,C]]]}
\end{equation}
\begin{equation}\tag{S4}
\cd{[A,B] \ar@/_2pc/[rrrrr]_{1} \ar[rr]^{L} & & [[A,A],[A,B]] \ar[r]^{[j,1]} & [I,[A,B]] \ar[r]^{s} & [A,[I,B]] \ar[r]^{[1,i]} & [A,B]}
\end{equation}
\end{Definition}
$\C$ is said to be symmetric closed if its underlying skew closed category is closed.  
\begin{Var}\label{Theorem:Var2}
As in Variants~\ref{Theorem:Var1} there are evident notions of symmetric skew closed 2-categories and symmetric semi-closed categories.
\end{Var}
\begin{Remark}
The notion of symmetric closed category described above coincides with that of \cite{Day1978On}, though this may not be immediately apparent.  Their invertible unit map $i:X \to [I,X]$ points in the opposite direction to ours.  Reversing it, their (CC4) is clearly equivalent to our (S4).  Their remaining axioms are a proper subset of those above, with (C1), (C3), (C4) and (C5) omitted.  But as they point out in Proposition 1.3 any symmetric closed category in their sense is a closed category and hence satisfies all four of these.
\end{Remark}
I first encountered a result close to the following one as Proposition 2.3 of \cite{Day1978On}, which shows that a symmetric closed category $\C$ gives rise to a symmetric promonoidal one by setting $P(A,B,C)=\C(A,[B,C])$.  This easily implies that a symmetric closed category gives rise to a symmetric monoidal one on taking adjoints.  In a discussion about that result, Ross Street pointed out that a skew monoidal category with an invertible natural isomorphism $A\otimes B \cong B \otimes A$ satisfying the braid equation $B5$ of \cite{Joyal1986Braided} is necessarily associative.  The first part of the following result essentially reformulates Street's associativity argument in terms of the closed structure.  Diagram 4.8 of Chapter IV of De Schipper's book \cite{deSchipper1975Symmetric} -- ~\eqref{eq:smc} below -- proved helpful in making that reformulation.

\begin{Theorem}[Day-LaPlaza, Street]\label{Theorem:ClassicalSymmetry}
Let $(\C,\otimes,[-,-],I)$ be monoidal skew closed.
\begin{enumerate}
\item The transformation $t:[A \otimes B,C] \to [A,[B,C]]$ is invertible if $(\C,[-,-],I)$ is left normal and admits a natural isomorphism $s:[A,[B,C]] \cong [B,[A,C]]$ satisfying S3.  In particular,  if $(\C,[-,-],I)$ is actually closed and admits such a symmetry then $(\C,\otimes ,I)$ is monoidal.
\item  If $(\C, [-,-],I,s)$ is symmetric closed then $(\C,\otimes,I,c)$ is symmetric monoidal.
\end{enumerate}
\end{Theorem}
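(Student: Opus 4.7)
My overall approach for both parts is to transport the closed-side data to the monoidal side through the bijection $\phi$ together with left normality, and then to verify standard coherence. The crucial preliminary observation is that by diagram \eqref{eq:ass} the map $\C(1,t)$ is naturally conjugate under $\phi$ to $\C(\alpha,1)$, so $t$ is invertible if and only if $\alpha$ is. Under left normality alone, the discussion preceding Theorem \ref{Theorem:ClassicalSymmetry} produces a natural isomorphism $c:B\otimes A \cong A\otimes B$ by conjugating $\C(I,s)$ with $v$ and applying the Yoneda lemma, so both parts reduce to analysing the coherence properties of $c$.

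For Part (1) the plan is to show that $c$ satisfies the braid equation $B5$ of \cite{Joyal1986Braided}, then to invoke the associativity observation of Street (cited in the paragraph preceding the theorem) to conclude that $\alpha$ is invertible. The braid equation for $c$ should translate, under $\phi$ and $v$, into axiom $S3$: the latter asserts the compatibility of $s$ with the internal composition map $L$, while $L$ corresponds under $\phi$ to the associator. This yields invertibility of $\alpha$ and hence of $t$. For the \emph{in particular} clause, if $\C$ is actually closed then $i$ — and so by \eqref{eq:rightunit} the right unit $r$ — is invertible; left normality together with \eqref{eq:leftunit} already makes $l$ invertible, so Theorem \ref{Theorem:EK} delivers a genuine monoidal structure.

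Given Part (1), Part (2) requires only that I verify that $c$ satisfies the remaining symmetric monoidal axioms, which I would do by translating the other closed-side axioms one at a time: $S1$ immediately gives the involutivity $c_{A,B}\circ c_{B,A} = 1$; $S2$ combined with $S3$ yields the hexagon identities; and $S4$, which binds $s$ to $i$ and $j$, translates through \eqref{eq:leftunit}--\eqref{eq:rightunit} into the compatibility of $c$ with the unit maps $l$ and $r$.

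The principal obstacle is combinatorial rather than conceptual: each translation requires a diagram chase pushing the closed-side equation through $\phi$ and $v$ with repeated use of naturality. The diagram shape needed for the braid translation is the one referenced in the remark preceding the theorem as Diagram 4.8 of De Schipper \cite{deSchipper1975Symmetric}; I would use that as the template for the central calculation in Part (1), after which the remaining axiom translations in Part (2) are routine.
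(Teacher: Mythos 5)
The central step of your Part (1) does not go through as written. You propose to construct $c$ from $s$, verify the braid equation $B5$, and then ``invoke the associativity observation of Street'' to conclude that $\alpha$ (equivalently $t$) is invertible. But that observation is not a citable result: it is reported in the paragraph preceding the theorem as a remark from a private discussion, and the paper says explicitly that Part (1) of the theorem \emph{is} a reformulation of Street's (unwritten) associativity argument in terms of the closed structure. Invoking the observation therefore assumes exactly the content you are asked to supply; a complete proof must actually establish the invertibility, either on the monoidal side (by writing out Street's argument for skew monoidal categories with an invertible braiding) or on the closed side. The paper does the latter and never touches $c$ or $B5$ in Part (1): it first proves the square \eqref{eq:smc} relating $t$ and $s$, using only $S3$, naturality of $s$, and the definition $t=[u,1]\circ L$ from \eqref{eq:t}; it then uses left normality to define a conjugation $(-)_{0}$ sending $f:[A,B]\to[C,D]$ to a function $\C(A,B)\to\C(C,D)$, computes via $C3$ that $L_{0}=[C,-]$ and hence that $t_{0}=\phi$, and finally applies $(-)_{0}$ to \eqref{eq:smc} to exhibit $\C(C,t)$ as a composite of isomorphisms for every $C$, whence $t$ is invertible and Theorem~\ref{Theorem:EK} finishes. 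Note also that your appeal to De Schipper's Diagram 4.8 as the template for the braid translation is misplaced: that diagram is precisely \eqref{eq:smc}, a closed-side compatibility of $t$ with $s$, not a shape for $B5$.

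Your treatment of the ``in particular'' clause via Theorem~\ref{Theorem:EK} is fine, and your plan for Part (2) --- translating $S1$, $S2$ and $S4$ into involutivity, the hexagon and unit compatibility --- is in substance De Schipper's Theorem 6.2, which the paper simply cites (alongside Proposition 2.3 of \cite{Day1978On}) rather than reproving; carried out in full this would be correct, but it is a substantial diagram chase and in any case presupposes a repaired Part (1).
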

\begin{proof}
For (1) we first prove that
\begin{equation}\label{eq:smc}
\cd{
[A \otimes B,[C,D]] \ar[d]_{t} \ar[rr]^{s} && [C,[A\otimes B,D]] \ar[d]^{[1,t]} \\ 
[A,[B,[C,D]]] \ar[r]_{[1,s]} & [A,[C,[B,D]]] \ar[r]_{s} & [C,[A,[B,D]]]}
\end{equation}
commutes.  From \eqref{eq:t} we have $t=[u,1] \circ L$.  So the upper path equals 
\begin{equation*}[
[1,[u,1]]  \circ [1,L] \circ s = [1,[u,1]] \circ s \circ [1,s] \circ L = s \circ [1,s] \circ [u,1] \circ L = s \circ [1,s] \circ t
\end{equation*}
by S3, naturality of $s$ twice applied, and the definition of $t$.\\
By assumption each component $v:\C(A,B) \to \C(I,[A,B])$ is invertible.  Accordingly a morphism $f:[A,B] \to [C,D]$ gives rise to a further morphism $f_{0}:\C(A,B) \to \C(C,D)$ by conjugating $\C(I,f)$.  At $[A,g]:[A,B] \to [A,C]$ we obtain $[A,g]_{0}=\C(A,g)$.  At $L:[A,B] \to [[C,A],[C,B]]$ an application of $C3$ establishes that $L_{0}=[C,-]:\C(A,B) \to \C([C,A],[C,B])$.  Now $(-)_{0}$, being defined by conjugating through natural isomorphisms, preserves composition.  Combining the two last cases we find that $t_{0} = [u,1]_{0} \circ L_{0}= \C(u,1) \circ [B,-] = \phi$, the adjointness isomorphism.  Applying $(-)_{0}$ to the above diagram, componentwise, then gives the commutative diagram below.
\begin{equation*}
\cd{
\C(A \otimes B,[C,D]) \ar[d]_{\phi} \ar[rr]^{s_{0}} && \C(C,[A\otimes B,D]) \ar[d]^{\C(C,t)} \\ 
\C(A,[B,[C,D]]) \ar[r]_{\C(A,s)} & \C(A,[C,[B,D]]) \ar[r]_{s_{0}} & \C(C,[A,[B,D]])}
\end{equation*}
Since the left vertical path and both horizontal paths are isomorphisms, so is $\C(C,t)$ for each $C$.  Therefore $t$ is itself an isomorphism.  The remainder of (1) now follows from Theorem~\ref{Theorem:EK}.\\
As mentioned, Part 2 follows from Proposition 2.3 of \cite{Day1978On}.  We note an alternative elementary argument.  Having established the commutativity of \eqref{eq:smc} and that $t$ is an isomorphism, we are essentially in the presence of what De Schipper calls a \emph{monoidal symmetric closed category}.\begin{footnote}{ This is not exactly the case as De Schipper, following Eilenberg-Kelly, includes a basic functor $V:\C \to \Set$ in his definition of symmetric closed category.  However this basic functor plays no role in the proof of the cited result.}\end{footnote}  Theorem 6.2 of \cite{deSchipper1975Symmetric} establishes that a monoidal symmetric closed category determines a symmetric monoidal one, as required.
\end{proof}

\section{From multicategories to skew closed categories}\label{section:multi}

Our examples of skew closed categories in Section 4 can be seen as arising from closed multicategories equipped with further structure.  In the present section we describe how to pass from such multicategories to skew closed categories.\\
Multicategories were introduced in \cite{Lambek1969Deductive2} and have objects $A,B,C \ldots $ together with \emph{multimaps} $(A_{1}, \ldots , A_{n}) \to B$ for each $n \in \mathbb N$.  These multimaps can be composed and satisfy natural associativity and unit laws.  We use boldface $\f C$ for a multicategory and $\C$ for its underlying category of unary maps.  A \emph{symmetric} multicategory $\f C$ comes equipped with actions of the symmetric group $S_{n}$ on the sets $\f C(A_{1}, \ldots A_{n};B)$ of $n$-ary multimaps.  These actions must be compatible with multimap composition.  For a readable reference on the basics of multicategories we refer to \cite{Leinster2004Higher}.

\subsection{Closed multicategories}
A multicategory $\f C$ is said to be \emph{closed} if for all $B, C \in \f C$ there exists an object $[B,C]$ and evaluation multimap $e:([B,C],B) \to C$
with the universal property that the induced function
\begin{equation}\label{eq:multi0}
{
\f C(A_{1}, \ldots, A_{n};[B,C]) \to \f C(A_{1}, \ldots, A_{n},B;C)}
\end{equation}
is a bijection for all $(A_{1}, \ldots , A_{n})$ and $n \in \mathbb N$.  We can depict the multimap $e:([B,C],B) \to C$ as below.
\begin{equation}\label{eq:multi0.5}
\xygraph{{e} *\xycircle<8pt>{-}="m" "m"(-[l(1)u(.4)] [l(.2)u(.2)] {[B,C]}, "m"(-[l(1)d(.4)] [l(.2)d(.2)] {B},-[r(1)] {C})} 
\end{equation}

\subsubsection{Semiclosed structure}
Using the above universal property one obtains a bifunctor $[-,-]:\C^{op} \times \C \to \C$.  Given $f:B \to C$ the map $[A,f]:[A,B] \to [A,C]$ is the unique one such that the two multimaps
\begin{equation}\label{eq:multi1}
\xygraph{{e} *\xycircle<8pt>{-}="m" [l(1.3)u(0.5)] {[A,f]}*\xycircle<15pt>{-}="n" 
"n"(-"m"^-*{[A,C]}, -[l(0.8)] [l(0.5)] {[A,B]}
"m"(-[l(2.4)d(.4)] {A},-[r(0.7)] {C}  [r(1)] {=}) )} 
\xygraph{{e} *\xycircle<8pt>{-}="m" [r(1)] {f}*\xycircle<8pt>{-}="n" 
"n"(-[r(0.7)] {C}
"m"(-"n"^-*{B},-[l(1.4)d(.4)] {A}, -[l(1.4)u(.4)] {[A,B]})} 
\end{equation}
coincide, whilst $[f,A]$ is defined in a similar manner.\\
The natural bijections $$\f C([B,C],[A,B],A;C) \cong \f C([B,C],[A,B];[A,C]) \cong \f C([B,C],[[A,B],[A,C]])$$ induce a unique morphism $L:[B,C] \to [[A,B],[A,C]]$ such that the multimaps
\begin{equation}\label{eq:multi2}
\xygraph{{e}*\xycircle<8pt>{-}="n" [u(0.6)l(1)] {e} *\xycircle<8pt>{-}="m" [u(0.8)l(1)] {L}*\xycircle<8pt>{-}="L" 
"L"(-"m"^-*{[[A,B],[A,C]]},-[l(1)] [u(.2)l(.2)] {[B,C]}
"m"(-"n"^-*{[A,C]},-[l(2)d(0.3)] [u(.2)l(.2)] {[A,B]}
"n"(-[l(3)d(0.6)] [l(.2)] {A}, -[r(0.5)] [r(.2)] {C}
 [r(1)] {=})
} 
\xygraph{{e}*\xycircle<8pt>{-}="n" [u(0.5)r(1)] {e} *\xycircle<8pt>{-}="m" 
"n"(-"m"_-*{B},-[l(1)u(.4)] [l(.2)u(.2)] {[A,B]}, -[l(1)d(.4)] [l(.2)d(.2)] {A}
"m"(-[l(2)u(.8)] [l(.2)u(.2)] {[B,C]}, -[r(0.5)] [r(.2)] {C}
)
}
\end{equation}
coincide.  
\subsubsection{Symmetry}
If $\f C$ is a symmetric multicategory the natural bijections 
$$\f C([A,[B,C]],B,A;C) \cong \f C([A,[B,C]],B;[A,C]) \cong \f C ([A,[B,C]],[B,[A,C]])$$ induce a unique map $s:[A,[B,C]] \to [B,[A,C]]$ such that the multimaps

\begin{equation}\label{eq:multi3}
\xygraph{{e}*\xycircle<8pt>{-}="n" [u(0.6)l(1)] {e} *\xycircle<8pt>{-}="m" [u(0.8)l(1)] {s}*\xycircle<8pt>{-}="L" 
"s"(-"m"^-*{[B,[A,C]]},-[l(1)] [u(.2)l(.2)] {[A,[B,C]]}
"m"(-"n"^-*{[A,C]},-[l(2)d(0.3)] [u(.2)l(.2)] {B}
"n"(-[l(3)d(0.6)] [l(.2)] {A}, -[r(0.5)] [r(.2)] {C}
 [r(1)] {=})
} 
\xygraph{{e}*\xycircle<8pt>{-}="n" [u(0.6)l(1)] {e} *\xycircle<8pt>{-}="m"
"m"( -"n"^-*{[B,C]}, -[l(2)d(1.3)] [l(.2)] {A}, -[l(2)u(0.5)] [l(.2)u(0.2)] {[A,[B,C]]}
"n" (-[l(3)][l(0.2)]{B}, -[r(0.5)] [r(.2)] {C}))
}
\end{equation}
coincide.  The multimap above right depicts the image of $e \circ (e,1):([A,[B,C]],A,B) \to ([B,C],B) \to C$ under the action $$\f C([A,[B,C]],A,B;C) \cong \f C([A,[B,C]],B,A;C) \hspace{0.2cm}$$
of the symmetric group.  
\subsubsection{Nullary map classifiers and units}
\begin{Definition}
A multicategory $\f C$ has a nullary map classifier if there exists an object $I$ and multimap $u:(-) \to I$ such that the induced morphism $$\f C(u;A):\f C(I,A) \to \f C(-;A)$$
is a bijection for each $A$.
\end{Definition}
Equivalently, if the functor $\f C(-;?):\C \to \Set$ sending an object $A$ to the set of nullary maps $(-) \to A$ is representable.  In a closed multicategory a nullary map classifier $I$ enables the construction of morphisms $i:[I,A] \to A$ and $j:I \to [A,A]$.  The former is given by
\begin{equation}\label{eq:multiUnit}
\xygraph{{}*\xycircle<1pt>{-}="a" [r(1)] {u}*\xycircle<8pt>{-}="u" [u(.4)r(1.2)] {e} *\xycircle<8pt>{-}="m" 
"a"(-"u",
"u"(-"m"_-*{I},
"m"(-[l(3)u(.5)] [l(.2)u(.2)] {[I,A]}, "m"-[r(1)] {A})} 
\end{equation}
For $j$ observe that the identity $1:A \to A$ corresponds under the isomorphism $ \f C(A,A) \cong \f C(-;[A,A])$ to a nullary map $\hat{1}:(-) \to [A,A]$.  Now $j:I \to [A,A]$ is defined to be the unique map such that 
\begin{equation}\label{multiUnit2}
{
j \circ u = \hat{1} \hspace{0.2cm} .
}
\end{equation}
In \cite{Manzyuk2012Closed} the term \emph{unit} for a closed multicategory means something stronger than a nullary map classifier: it consists of a multimap $u:(-) \to I$ for which ~\eqref{eq:multiUnit} is invertible.  By Remark 4.2 of \emph{ibid.} a unit is a nullary map classifier.

\subsection{The results}The following result is Proposition 4.3 of \cite{Manzyuk2012Closed}.
\begin{Theorem}[Manzyuk]
If $\f C$ is a closed multicategory with unit $I$ then \newline $(\C,[-,-],I,L,i,j)$ is a closed category.
\end{Theorem}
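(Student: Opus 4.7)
The plan is to check each of the five skew closed axioms (C1)--(C5) for $(\C,[-,-],I,L,i,j)$ and then establish left and right normality. The essential tool throughout is that, by the closedness of $\f C$, two morphisms $f,g:X \to [B,C]$ are equal iff the composite multimaps $e \circ (f,1), e \circ (g,1):(X,B) \to C$ are equal; iterating, equality of morphisms $X \to [B_1,[B_2, \ldots ,[B_n,C] \ldots ]]$ can be tested by fully uncurrying through $n$ evaluations. Thus every axiom reduces to an equality of multimaps in $\f C$, which we verify using the defining equations \eqref{eq:multi1}, \eqref{eq:multi2}, \eqref{eq:multiUnit}, \eqref{multiUnit2} together with the associativity and unit laws of multicomposition.

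Concretely, I would argue in the following order. First, the extranaturality and naturality statements for $L$, $i$, $j$ follow immediately by uncurrying and applying \eqref{eq:multi1} and \eqref{eq:multi2}. For (C3), uncurry both sides to the nullary level $\f C(-;[[A,B],[A,B]])$: the path through $j_B$ followed by $L$ unpacks, via the definition of $L$ and \eqref{multiUnit2}, to the nullary multimap corresponding to $1_{[A,B]}$, and likewise for the direct $j_{[A,B]}$; so both equal $\widehat{1}_{[A,B]}$. For (C2), uncurry to $\f C([A,C],A;C)$ and observe that $[j,1] \circ L$ followed by $i$ unpacks via \eqref{eq:multi2} and \eqref{eq:multiUnit} to two nested evaluations, which by \eqref{multiUnit2} collapse to $e$. (C4) and (C5) proceed analogously, using that $i$ is defined by \eqref{eq:multiUnit} and the naturality of the uncurrying bijection.

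The main labour is (C1), the pentagon for $L$. Here I would uncurry both composite morphisms all the way down to a multimap $([C,D],[A,B],[A,B]',A,A') \to D$ type expression — more precisely, to the set $\f C([C,D],[A,B],A;D)$ — by applying $e$ three times to the outer path and three times to the other. Each application of $L$ in the composite reduces, via its defining equation \eqref{eq:multi2}, to replacing a chunk of the form $e \circ (L, 1)$ by $e \circ (1, e)$. Applying this rewrite systematically on both sides of the pentagon, one finds both composites become the same iterated evaluation $e \circ (e \circ (e, 1), 1)$ (up to the associativity of multicomposition), so they agree and hence (C1) holds by the injectivity of the uncurrying bijection.

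It remains to verify normality. For left normality, the bijection \eqref{eq:multi0} at $n=0$ gives $\C(I,[A,B]) = \f C(I;[A,B]) \cong \f C(I,A;B)$, and the nullary classifier bijection $\f C(u;B):\f C(I,B) \cong \f C(-;B)$ combined with \eqref{eq:multi0} at $n=1$ gives $\f C(A;B) \cong \f C(-;[A,B]) \cong \f C(I;[A,B])$; tracing through the definitions of $L$, $j$ and $i$, the induced composite $\C(A,B) \to \C(I,[A,B])$ agrees with the left normality map $v$, so $v$ is invertible. For right normality, the definition of a unit in the sense of \cite{Manzyuk2012Closed} is precisely that \eqref{eq:multiUnit} is an isomorphism at every $A$; inspecting the definition of $i$, this is the statement that $i_A:[I,A] \to A$ is invertible. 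The expected difficulty is purely bookkeeping in (C1); everything else is a short diagram chase after the right universal property is invoked.
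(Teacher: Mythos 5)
Your proposal is correct and follows essentially the same strategy that the paper sketches for the neighbouring Theorem~\ref{thm:skewMulti} and attributes to Manzyuk's Proposition 4.3: uncurry each axiom through the evaluation multimaps until it becomes an identity of multimaps provable from the associativity and unit laws of $\f C$, then obtain left normality from the composite bijection $\C(A,B)\cong\f C(-;[A,B])\cong\C(I,[A,B])$ (which one checks is $v$) and right normality from the observation that Manzyuk's unit condition is precisely the invertibility of \eqref{eq:multiUnit}, i.e.\ of $i$. The only slip is bookkeeping: the fully uncurried form of (C1) lives in $\f C([C,D],[B,C],[A,B],A;D)$ rather than $\f C([C,D],[A,B],A;D)$, but this does not affect the argument.
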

Since we are interested in constructing mere skew closed categories a nullary map classifier suffices.  
\begin{Theorem}\label{thm:skewMulti}
If $\f C$ is a closed multicategory with a nullary map classifier $I$ then $(\C,[-,-],I,L,i,j)$ is a skew closed category.  Furthermore if $\f C$ is a symmetric multicategory then  $(\C,[-,-],I,L,i,j,s)$ is symmetric skew closed.
\end{Theorem}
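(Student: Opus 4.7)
The plan is to verify the skew closed category axioms by reducing each to an equality of multimaps, which can then be checked diagrammatically using the universal property~\eqref{eq:multi0} of the evaluation multimaps $e:([B,C],B) \to C$. The organising principle is: two unary maps into an internal hom $[X,Y]$ agree iff their composites (inside the multicategory) with $e$ agree, and this principle can be iterated.

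First I would establish the basic data. Functoriality of $[-,-]:\C^{\op} \times \C \to \C$ and naturality of $L$ in $B,C$ together with extranaturality in $A$ follow from the uniqueness clauses in~\eqref{eq:multi1} and~\eqref{eq:multi2}: in each case, composing the two candidate morphisms with the appropriate tower of evaluations produces the same underlying multimap. Naturality of $i:[I,A] \to A$ is immediate from~\eqref{eq:multiUnit} combined with the bijection $\f C(u;?)$, while extranaturality of $j$ follows from \eqref{multiUnit2} and the fact that $\hat{1}$ is natural in $A$.

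Next I would verify C1--C5. The strategy in each case is the same: the axiom asserts equality of two morphisms into an iterated internal hom $[X_1,[X_2,\ldots,[X_n,Y]]]$, and composing both sides with a chain of $n$ evaluations reduces the question to equality of multimaps $(Z,X_1,\ldots,X_n) \to Y$, which can be checked by unfolding the definitions~\eqref{eq:multi1},~\eqref{eq:multi2},~\eqref{eq:multiUnit},~\eqref{multiUnit2}. Explicitly: for C1, composing with $e \circ (e,1) \circ (e,1,1)$ reduces both paths, via three applications of~\eqref{eq:multi2}, to the canonical four-fold evaluation multimap $([C,D],[B,C],[A,B],A) \to D$. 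C2 is dealt with similarly, using~\eqref{eq:multi2} followed by the defining equation~\eqref{multiUnit2} for $j$; C3 reduces to checking that composing $L$ with $e$ and then with $\hat{1}$ gives back the identity, which unravels directly from~\eqref{eq:multi2}. C4 is handled by reducing both sides to a single two-fold evaluation via~\eqref{eq:multi2} and~\eqref{eq:multiUnit}. C5 is immediate from the uniqueness of $j_I$ in~\eqref{multiUnit2} applied at $A=I$, using that $i \circ j_I \circ u = i \circ \hat{1} = u$.

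For the symmetric case, the morphism $s:[A,[B,C]] \to [B,[A,C]]$ defined by~\eqref{eq:multi3} is inverted by the map defined with the opposite transposition in $S_2$, and S1 reduces to the fact that the transposition of $S_2$ is self-inverse. S2 is the braid relation: after composing with $e \circ (e,1) \circ (e,1,1)$ and invoking~\eqref{eq:multi3} three times along each path, both sides rewrite the underlying multimap $([A,[B,[C,D]]],A,B,C) \to D$ by the same permutation of $(A,B,C)$ (the one swapping $A$ and $C$), so the two composites coincide. S3 is checked by composing with $e \circ (e,1)$, using~\eqref{eq:multi2} on the $L$-factors and~\eqref{eq:multi3} on the $s$-factor to reduce both paths to a common multimap in $\f C([A,[B,C]],[D,A],B,D;C)$ determined by an appropriate $S_n$-action. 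Finally S4 combines $L$, $j$, $s$ and $i$: composing with $e$ and applying, in turn,~\eqref{eq:multi2},~\eqref{multiUnit2},~\eqref{eq:multi3} and~\eqref{eq:multiUnit} reduces the composite to $e$ itself, identifying the map with $1_{[A,B]}$.

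The step I expect to be most delicate is not any individual verification but the bookkeeping of the $S_n$-actions in S2, where one must confirm that the permutations introduced by successive applications of $s$ compose to the same element of $S_3$ along both paths around the hexagon. This is most cleanly presented by drawing the string-diagram multimaps as in~\eqref{eq:multi3} and reading off the resulting braid, rather than manipulating the underlying formulas symbolically.
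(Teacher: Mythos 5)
Your proposal is correct and follows essentially the same route as the paper's (itself only outlined) proof: axioms C1, C2, C4 and S1--S4 are reduced to equalities of multimaps by postcomposing with evaluation multimaps and unwinding the defining equations \eqref{eq:multi1}--\eqref{eq:multi3}, while C3 and C5, being equalities of maps out of $I$, are checked on the corresponding nullary maps via the classifier property of $u$. Your extra bookkeeping for S2 (both paths realise the transposition exchanging $A$ and $C$, via the braid identity in $S_3$) is a correct filling-in of a detail the paper leaves implicit.
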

\begin{proof}
We only outline the proof, which involves routine multicategorical diagram chases best accomplished using string diagrams as in \eqref{eq:multi0.5}--\eqref{eq:multiUnit}.  (We note that the deductions of C1 and C3 are given in the proof of Proposition 4.3 of \cite{Manzyuk2012Closed}.)  The axioms C1, C2 and C4 each assert the equality of two maps $$X \rightrightarrows [Y_{1}, \ldots ,[Y_{n-1},[Y_{n},Z]]..]$$ constructed using $[-,-]$, $L$, $i$ and $j$.  These correspond to the equality of the transposes $$(X, Y_{1}, \ldots, Y_{n}) \rightrightarrows Z$$ obtained by postcomposition with the evaluation multimaps.  Since $[-,-]$, $L$, $i$ and $j$ are defined in terms of their interaction with the evaluation multimaps only their definitions, together with the associativity and unit laws for a multicategory, are required to verify these axioms.  C3 and C5 each  concern the equality of two maps $I \rightrightarrows A$.  Here one shows that the corresponding nullary maps $(-) \rightrightarrows A$ coincide.  Again this is straightforward.  The axioms S1-S4 are verified in a similar fashion.
\end{proof}

Theorem~\ref{thm:skewMulti}, as stated, will not apply to the examples of interest, none of which quite has a nullary map classifier.  What we need is a generalisation that deals with combinations of strict and weak maps.
\begin{Definition}\label{Thm:strict}
Let $\f C$ be a multicategory equipped with a subcategory $\C_{s} \subseteq \C$ of \emph{strict} morphisms containing all of the identities.  We say that a multimap $f:(A_{1}, \ldots, A_{n}) \to B$ is \emph{strict in $i$} (or $A_{i}$ abusing notation) if for all families of multimaps $\{a_{j}:(-) \to A_{j}:j \in \{1,\ldots, i-1,i+1, \ldots, n\}\}$ the unary map $$f \circ (a_{1}, \ldots a_{i-1},1,a_{i+1}, \ldots a_{n}):A_{i} \to B$$ is strict.
\end{Definition}

\begin{Theorem}\label{Thm:multiToClosed}
Let $\f C$ be a closed multicategory equipped with a subcategory $\C_{s} \to \C$ of strict maps containing the identities.  Suppose further that
\begin{enumerate}
\item A multimap $(A_{1}, \ldots , A_{n},B) \to C$ is strict in $A_{i}$ if and only if its transpose $(A_{1}, \ldots, A_{n}) \to [B,C]$ is.
\item There is a multimap $u:(-) \to I$, precomposition with which induces a bijection $\f C(u,A):\C_{s}(I,A) \to \f C(-;A)$ for each $A$.
\end{enumerate}
Then $(\C,[-,-],L)$ is a semi-closed category.  Moreover $[-,-]$ and $L$ restrict to $\C_{s}$ where they form part of a skew closed structure $(\C_{s},[-,-],I,L,i,j)$.\\
Furthermore if $\f C$ is a symmetric multicategory then $(\C,[-,-],L,s)$ is symmetric semi-closed and $(\C_{s},[-,-],I,L,i,j,s)$ is symmetric skew closed.
\end{Theorem}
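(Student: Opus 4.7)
The plan is to follow the construction in the proof of Theorem~\ref{thm:skewMulti} essentially verbatim -- defining $L$, $i$, $j$ (and $s$ in the symmetric case) by the same universal properties inside the multicategory $\f C$ -- while carefully tracking strictness, so that the unital axioms can be verified on $\C_{s}$ using condition~(2)'s restricted bijection.

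The central strictness input is that the evaluation $e\ti([B,C],B)\to C$ is strict in its first slot: its transpose under closedness is $\id_{[B,C]}$, which is strict, and condition (1) transports this back to $e$. A straightforward check shows that composition of multimaps preserves the strict-in-$X$ property in any slot away from the composition site. From this I would deduce:
\begin{itemize}
\item $[-,-]$ restricts to a bifunctor $\C_{s}^{\op}\times\C_{s}\to\C_{s}$, since for $f\in\C_{s}$ the multimaps defining $[A,f]$ and $[f,A]$ (via postcomposition or slot-substitution with $f$ into $e$) inherit strictness in the first slot from $e$;
\item $L\ti[B,C]\to[[A,B],[A,C]]$ is strict, being the iterated transpose of $e\circ(e,1)\ti([B,C],[A,B],A)\to C$, which is strict in $[B,C]$;
\item $i\ti[I,A]\to A$ is strict, since by~\eqref{eq:multiUnit} it is $e\circ(1,u)$, obtained from $e$ by precomposition with the nullary $u$ in its non-strict slot;
\item $j\ti I\to[A,A]$ is strict by construction, being defined via condition~(2) as the unique strict lift of the nullary classifier of $1_{A}$;
\item in the symmetric case, the map $s$ of~\eqref{eq:multi3} is strict, since the symmetric action permutes only the $A$- and $B$-slots of the strict-in-$[A,[B,C]]$ multimap $e\circ(e,1)$.
\end{itemize}

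The axioms $C1$--$C5$ and $S1$--$S4$ are then verified by the same multicategorical string-diagram chases as in Theorem~\ref{thm:skewMulti}. Axiom $C1$ uses only the closed structure and holds on all of $\C$, yielding the semi-closed category $(\C,[-,-],L)$; the remaining $C$-axioms and all $S$-axioms hold for the restricted structure on $\C_{s}$. Axioms $C2$, $C4$, and $S1$--$S4$ compare two unary maps and are verified by postcomposition with the appropriate evaluation multimap; strictness plays no role here beyond ensuring that the constituent morphisms lie in $\C_{s}$. The essential new point concerns $C3$ and $C5$, which compare maps $I\to X$: I would reduce these to equalities of their nullary transposes using condition~(2)'s bijection, which applies precisely because both sides are compositions of strict maps.

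The main obstacle is simply the bookkeeping: at each step one must verify that the morphisms entering a given axiom are strict, so that condition~(2) can be applied when comparing maps out of $I$. Given the strictness of $L$, $i$, $j$, $s$ and the closure of $\C_{s}$ under composition, this is routine, and no new ideas beyond those in Theorem~\ref{thm:skewMulti} are required.
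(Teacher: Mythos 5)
Your proposal is correct and follows the paper's proof essentially step for step: the evaluation multimap is strict in its hom-slot because its transpose is the identity, strictness of $[-,-]$, $L$, $i$, $j$ and $s$ is deduced from closure of strict slots under multicategorical composition, and the axioms are then checked exactly as in Theorem~\ref{thm:skewMulti}, with $C3$ and $C5$ handled via the nullary bijection of condition (2) applied to strict maps out of $I$. The only imprecision is in your auxiliary lemma: composition does \emph{not} preserve strictness in a slot of the substituted (inner) multimap unless the outer multimap is itself strict at the substitution site (the paper states the closure-under-composition fact with exactly this hypothesis; without it, substituting the strict identity into a non-strict unary map already gives a counterexample), but every instance you invoke -- $f\circ e$ with $f$ strict, $e\circ(1,e)$, $e\circ(1,u)$, and $e\circ(e,1)$ -- satisfies that hypothesis, so your argument goes through unchanged.
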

\begin{proof}
We must show that these assumptions ensure that the bifunctor $[-,-]:\C^{op} \times \C$ restricts to $\C_{s}$ and that the transformations $L, i, j$ and $s$ have strict components.  Beyond this point the proof is identical to that of Theorem~\ref{thm:skewMulti}.\\
A consequence of Definition~\ref{Thm:strict} is that multimaps strict in a variable are \emph{closed under composition}: that is, given $f:( A_{1}, A_{2} \ldots  A_{n}) \to  B_{k}$ strict in $ A_{i}$ and $g:( B_{1}, B_{2} \ldots  B_{m}) \to  C$ strict in $ B_{k}$  the composite multimap $$( B_{1}\ldots  B_{k-1}, A_{1}\ldots  A_{i} \ldots  A_{n}, B_{k+1} \ldots  B_{m}) \to  C$$ is strict in $ A_{i}$.  We use this fact freely in what follows.\\
Observe that since $1:[A,B] \to [A,B]$ is strict its transpose, the evaluation multimap $e:([A,B],A) \to B$, is strict in $[A,B]$.  It follows that if $f:B \to C$ is strict then the composite multimap $f \circ e: ([A,B],A) \to B \to C$ of \eqref{eq:multi1} is strict in $[A,B]$.  Accordingly its transpose $[A,f]:[A,B] \to [A,C]$ is strict.  Likewise $[f,A]$ is strict if $f$ is.  Hence $[-,-]$ restricts to $\C_{s}$.\\
Since evaluation multimaps are strict in the first variable the composite $e \circ (1,e):([B,C],[A,B],A) \to C$ of \eqref{eq:multi2} is strict in $[B,C]$.  Since transposing this twice yields $L:[B,C] \to [[A,B],[A,C]]$ we conclude that $L$ is strict.  The composite $i = e \circ (1,u):[I,A] \to A$ of \eqref{eq:multiUnit} is strict as $e:([I,A],I) \to A$ is strict in the first variable.  Clearly $j$ is strict.\\
In a symmetric multicategory the actions of the symmetric group commute with composition.  It follows that if  $F:(A_{1}, \ldots A_{n}) \to B$ is strict in $A_{i}$ and $\phi \in Sym(n)$ then $\phi(F):(A_{\phi(1)}, \ldots, A_{\phi(n)}) \to B$ is strict in $A_{\phi(i)}$.  Therefore the composite $([A,[B,C]],B,A) \to C$ on the right hand side of \eqref{eq:multi3} is strict in $[A,[B,C]]$.  Since $s:[A,[B,C]] \cong [B,[A,C]]$ is obtained by transposing this twice, it follows that $s$ is strict.
\end{proof}

In the next section we will encounter several examples of $\Cat$-enriched multicategories, hence 2-multicategories \cite{Hyland2002Pseudo}.  A 2-multicategory $\f C$ has \emph{categories} $\f C(A_{1}, \ldots, A_{n};B)$ of multilinear maps and transformations between, and an extension of multicategorical composition dealing with these transformations.  There is an evident notion of \emph{closed 2-multicategory}, in which the bijection \eqref{eq:multi0} is replaced by an isomorphism, and of symmetric 2-multicategory.  
Theorem~\ref{Thm:multiToClosed} generalises straightforwardly to 2-multicategories as we now record. 

\begin{Theorem}\label{Thm:multiToClosed2}
Let $\f C$ be a closed 2-multicategory equipped with a locally full sub 2-category $\C_{s} \to \C$ of strict maps containing the identities.  Suppose further that
\begin{enumerate}
\item A multimap $(A_{1}, \ldots , A_{n},B) \to C$ is strict in $A_{i}$ if and only if its transpose $(A_{1}, \ldots, A_{n}) \to [B,C]$ is.
\item There is a multimap $u:(-) \to I$, precomposition with which induces an isomorphism $\f C(u,A):\C_{s}(I,A) \to \f C(-;A)$ for each $A$.
\end{enumerate}
Then $(\C,[-,-],L)$ is a semi-closed 2-category.  Moreover $[-,-]$ and $L$ restrict to $\C_{s}$ where they form part of a skew closed 2-category $(\C_{s},[-,-],I,L,i,j)$.\\
Furthermore if $\f C$ is a symmetric 2-multicategory then $(\C,[-,-],L,s)$ is symmetric semi-closed and $(\C_{s},[-,-],I,L,i,j,s)$ is a symmetric skew closed 2-category.
\end{Theorem}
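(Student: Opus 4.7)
The plan is to reduce to the 1-categorical case of Theorem~\ref{Thm:multiToClosed} and then use the enhanced universal property of a closed 2-multicategory -- an isomorphism of hom-categories rather than a bijection of hom-sets -- to extend the structure with the requisite 2-cell data.

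First, applying Theorem~\ref{Thm:multiToClosed} to the underlying ordinary multicategory of $\f C$ and the underlying ordinary subcategory $\C_s \subseteq \C$ already yields the 1-categorical semi-closed structure $(\C,[-,-],L)$, its restriction to $\C_s$, and the skew closed structure $(\C_s, [-,-], I, L, i, j)$, together with (in the symmetric case) the symmetry $s$. All five axioms C1--C5 (and S1--S4) are verified at the 1-categorical level, which is precisely the content of the 2-categorical axioms. Hence nothing remains at the level of 1-cells.

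Second, I would upgrade $[-,-]$ to a 2-bifunctor. Given a 2-cell $\alpha: f \Rightarrow g: B \to C$ in $\C$, horizontally composing with the evaluation multimap yields a 2-cell $\alpha \ast e: f \circ e \Rightarrow g \circ e: ([A,B], A) \to C$ in $\f C$. The inverse of the transpose isomorphism $\f C([A,B]; [A,C]) \cong \f C([A,B], A; C)$, which is now an isomorphism of categories by hypothesis, then produces the required 2-cell $[A, \alpha]: [A,f] \Rightarrow [A,g]$. The 2-functor axioms follow from the corresponding interchange and unit laws for 2-cells in $\f C$, and the contravariant variable $[-,A]$ is handled dually.

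Third, 2-naturality of $L$, $i$, $j$ (and $s$) would follow by the same transpose-and-universal-property technique: the naturality square for any of these structure maps with respect to a 2-cell corresponds, under the transpose isomorphism, to an equation of 2-cells between composite multimaps in $\f C$. These equations hold immediately from the defining multimap equalities \eqref{eq:multi2}, \eqref{eq:multi3}, \eqref{eq:multiUnit} and the interchange laws of $\f C$. Finally, restriction of all this structure to $\C_s$ proceeds exactly as in the proof of Theorem~\ref{Thm:multiToClosed}: since $\C_s \hookrightarrow \C$ is locally full, strictness is determined by the 1-cell data, so hypothesis (1) at the 2-categorical level suffices to recycle the earlier strictness arguments verbatim.

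The main obstacle -- such as it is -- is purely bookkeeping: checking that the inverse transpose correspondence intertwines horizontal and vertical pasting of 2-cells in the expected way. But as these transposes form an isomorphism of categories, such checks reduce to formal diagram chases, and I do not expect any genuinely new phenomena compared with the 1-categorical case.
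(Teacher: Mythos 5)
Your proposal is correct and follows exactly the route the paper intends: the paper offers no proof of Theorem~\ref{Thm:multiToClosed2} at all, merely asserting that Theorem~\ref{Thm:multiToClosed} ``generalises straightforwardly'' to 2-multicategories, and your outline --- reusing the 1-categorical argument for the 1-cell data and axioms C1--C5/S1--S4, then using the isomorphism of hom-categories to define the 2-cell actions and verify 2-naturality by transposition, with local fullness of $\C_{s}$ handling the restriction --- supplies precisely those routine details. No gap.
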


\begin{Remark}
Theorem 5.1 of \cite{Manzyuk2012Closed} shows that the notions of closed multicategory with unit and closed category are, in a precise sense, equivalent.  We do not know whether skew closed categories are equivalent to some kind of multicategorical structure.
\end{Remark}

\section{Examples of skew closed structures}\label{section:examples}
The goal of this section is to describe a few concrete examples of the kind of skew closed structures that we are interested in.  All can be seen to arise from multicategories although sometimes it will be easier to describe the skew closed structure directly.  \\
In each case we meet a category, or 2-category, $\C$ of weak maps equipped with a subcategory $\C_{s}$ of strict maps.  The subcategory of strict maps is well behaved -- locally presentable, for instance -- whereas $\C$ is not.  The objects of the internal hom $[A,B]$ are the \emph{weak maps} but these only form part of a skew closed structure on the subcategory $\C_{s}$ of \emph{strict maps}.

\subsection{Categories with structure}\label{section:structuredCats}
The following examples can be understood as arising from pseudo-commutative 2-monads in the sense of \cite{Hyland2002Pseudo} -- this more abstract approach is described in Section~\ref{section:pseudoCommutative}.
\subsubsection{Categories with specified limits}\label{section:limits}
Let $\mathbf{D}$ be a set of small categories, thought of as diagram types.  There is a symmetric 2-multicategory $\DLIM$ whose objects $\f A$ are categories $A$ equipped with a choice of $\f D$-limits.  The objects of the category $\DLIM(\f A_{1}, \ldots, \f A_{n};\f B)$ are functors $F:A_{1} \times \ldots \times A_{n} \to B$ preserving $\mathbf{D}$-limits in each variable, and the morphisms are just natural transformations.  For the case $n=0$ we have $\DLIM(-;\f B)=B$.\\
The morphisms in the 2-category of unary maps $\DLim$ are the $\mathbf{D}$-limit preserving functors, amongst which we have the 2-category $\DLim_{s}$ of strict $\mathbf{D}$-limit preserving functors and the inclusion $j:\DLim_{s} \to \DLim$.  Accordingly a multimap $\f{F}:(\f A_{1},\ldots, \f A_{n}) \to \f{B}$ is strict in $\f{A_{i}}$ just when each functor $F(a_{1}, \ldots ,a_{i-1},-,a_{i+1}, \ldots, a_{n}):A_{i} \to B$ preserves $\mathbf{D}$-limits strictly.\\
The functor category $[A,B]$ has a canonical choice of $\mathbf{D}$-limits inherited pointwise from $\f B$.  Since \emph{$\mathbf{D}$-limits commute with $\mathbf{D}$-limits} the full subcategory $j:\DLim(\f A,\f B) \to [A,B]$ is closed under their formation (although $\DLim_{s}(\f A,\f B)$ is not!) and we write $[\f A,\f B]$ for $\DLim(A,B)$ equipped with this choice of $\f D$-limits.\\
It is routine to verify that the objects $[\f A,\f B]$ exhibit $\DLim$ as a closed 2-multicategory and moreover that a multimap $\f{F}:(\f{A}_{1},\ldots, \f{A}_{n},\f{B}) \to \f{C}$ is strict in $\f A_{i}$ just when its transpose  $(\f{A}_{1},\ldots, \f{A}_{n}) \to [\f{B},\f{C}]$ is.  With regards units, the key point is that the forgetful 2-functor $U:\DLim_{s} \to \Cat$ has a left 2-adjoint $F$.  This follows from \cite{Blackwell1989Two-dimensional} but see also Section~\ref{section:2-monads}.  Accordingly we have a natural isomorphism
$\DLIM(-;\f A) \cong \Cat(1,A) \cong \DLim_{s}(F1,\f A)$.  By Theorem~\ref{Thm:multiToClosed2} we obtain the structure of a symmetric semi-closed 2-category $(\DLim,[-,-],L,s)$ restricting to a symmetric skew closed 2-category $(\DLim,[-,-],F1,L,i,j,s)$.\\
The skew closed structure on $\DLim_{s}$ fails to extend to $\DLim$ because the unit map $j:F1 \to [\f A,\f A]$ is only \emph{pseudo-natural} in morphisms of $\DLim$. (It does, however, extend to a \emph{pseudo-closed} structure on $\DLim$ in the sense of \cite{Hyland2002Pseudo}).  The skew closed $\DLim_{s}$ is neither left nor right normal: for example, the canonical functor $\DLim_{s}(\f A,\f B) \to \DLim_{s}(F1,[\f A,\f B])$ is isomorphic to the inclusion $\DLim_{s}(\f A,\f B) \to \DLim(\f A,\f B)$ and this is not in general invertible.

\subsubsection{Permutative categories and so on}\label{section:permutative}
An example amenable to calculation concerns symmetric strict monoidal -- or permutative -- categories.  The symmetric skew closed structure can be seen as arising from a symmetric 2-multicategory, described in \cite{Elmendorf2006Rings}.  Because the relevant definition of multilinear map is rather long, we treat the skew closed structure directly.\\
Let $\Perm_{s}$ and $\Perm$ denote the 2-categories of permutative categories with the strict symmetric monoidal and strong symmetric monoidal functors between.  Using the symmetry of $\f B$ the category $\Perm(\f A,\f B)$ inherits a pointwise structure $[\f A,\f B] \in \Perm$.  Namely we set $(F \otimes G)-= F- \otimes G-$.  The structure isomorphism $(F \otimes G)(a \otimes b) \cong (F\otimes G)a \otimes (F\otimes G)b$ combines the structure isomorphisms $F(a \otimes b) \cong Fa \otimes Fb$ and $G(a \otimes b) \cong Ga \otimes Gb$ with the symmetry as below:
$$F(a \otimes b) \otimes G(a \otimes b) \cong Fa \otimes Fb \otimes Ga \otimes Gb \cong Fa \otimes Ga \otimes Fb \otimes Gb \hspace{0.5cm} .$$
The structural isomorphism concerning monoidal units is obvious.  The hom objects $[A,B]$ extend in the obvious way to a 2-functor $[-,-]:\Perm^{op} \times \Perm \to \Perm$.  Moreover, the functor $[\f C,-]:\Perm(\f A, \f B) \to \Perm([\f C,\f A], [\f C,\f B])$  lifts to a strict map $L=[\f C,-]:[\f A,\f B] \to [[\f C,\f A],[\f C,\f B]]$ because both domain and codomain have structure inherited \emph{pointwise from $\f B$}.  
We omits details of the symmetry isomorphism $s:[\f A,[\f B,\f C]] \cong [\f B,[\f A,\f C]]$.\\
The unit $F1$ is the free permutative category on $1$: the category of finite ordinals and bijections.  The unit map $i:[F1,\f A] \to \f A$ is given by evaluation at $1$ whilst $j:F1 \to [\f A,\f A]$ is the unique symmetric strict monoidal functor with $j(1)=1_{\f A}$.  Again the skew closed structure is neither left nor right normal.\\
This example can be generalised to deal with general symmetric monoidal categories.  A careful analysis of both tensor products and internal homs on the 2-category $\SMon$ of symmetric monoidal categories and strong symmetric monoidal functors was given by Schmitt \cite{Schmitt2007Tensor}.

\subsection{2-categories and bicategories}\label{section:bicategories}
Examples of skew closed structures not arising from pseudo-commutative 2-monads, even in the extended sense of \cite{Lopez-Franco2011Pseudo}, include 2-categories and bicategories.  We focus upon the more complex case of bicategories.
Let $\Bicat$ denote the category of bicategories and homomorphisms (also called pseudofunctors) and $\Bicat_{s}$ the subcategory of bicategories and strict homomorphisms.  We describe a symmetric skew closed structure on $\Bicat_{s}$ with internal hom $Hom(A,B)$ the bicategory of pseudofunctors, pseudonatural transformations and modifications from $A$ to $B$.\\
This skew closed structure arises from a closed symmetric multicategory.  We begin by briefly recalling the multicategory structure, which was introduced and studied in depth in Section 1.3 of \cite{Verity1992Enriched} by Verity, and to which we refer for further details.  The multicategory $\BICAT$ -- denoted by $\underline{Hom_{s}}$ in \emph{ibid.} -- has bicategories as objects.  The multimaps are a variant of the cubical functors of \cite{Gray1974Formal}.  More precisely, a multimap $F:(A_{1}, \ldots ,A_{n}) \to B$ consists of
\begin{itemize}
\item for each $n$-tuple $(a_{1}, \ldots , a_{n})$ an object $F(a_{1}, \ldots ,a_{n})$ of $B$;
\item for each $1 \leq i \leq n$ a homomorphism $F(a_{1}, \ldots , a_{i-1},-,a_{i+1} \ldots, a_{n})$ extending the above function on objects;
\item for each pair $1 \leq i < j \leq n$, $n$-tuple of objects $(a_{1}, \ldots , a_{n})$ and morphisms $f:a_{i} \to a^{\prime}_{i} \in A_{i}$ and  $f_{j}:a_{j} \to a^{\prime}_{j} \in A_{j}$, an invertible 2-cell:
\begin{equation*} 
\xy
(0,0)*+{F( a_{i}, a_{j} )}="00"; (45,0)*+{F( a^{\prime}_{i}, a_{j} )}="10"; (0,-15)*+{F( a_{i}, a^{\prime}_{j} )}="01"; (45,-15)*+{F( a^{\prime}_{i}, a^{\prime}_{j} )}="11"; 
{\ar^{F( f_{i}, a_{j} )} "00"; "10"}; {\ar_{F( f_{i}, a^{\prime}_{j} )} "01"; "11"}; 
{\ar_{F( a_{i}, f_{j} )} "00"; "01"}; {\ar^{F( a^{\prime}_{i}, f_{j} )} "10"; "11"}; 
{\ar@{=>}^{F( f_{i}, f_{j} )}(22,-6)*+{};(16,-10)*+{}};
\endxy
\end{equation*}
where we have omitted to label the inactive parts of the $n$-tuple $(a_{1}, \ldots , a_{n})$ under the action of $F$.  These invertible 2-cells are required to form the components of pseudonatural transformations both vertically -- $F(-_{i},f_{j}):F(-_{i},a_{j}) \to F(-_{i},a^{\prime}_{j})$ -- and horizontally -- $F(f_{i},-_{j}):F(a_{i},-_{j}) \to F(a^{\prime}_{i},-_{j})$ -- and satisfy a further cubical identity involving trios of morphisms.
\end{itemize}
A nullary morphism $(-) \to B$ is simply defined to be an object of $B$.  Observe that the category of unary maps of $\BICAT$ is simply $\Bicat$.
It is established in Section 1.3 of \emph{ibid.} -- see Lemma 1.3.4 and the discussion that follows -- that the symmetric multicategory $\BICAT$ is closed, with hom-object given by the bicategory $Hom(A,B)$ of homomorphisms, pseudonatural transformations and modifications from $A$ to $B$.\\
A multimap $F:(A_{1}, \ldots ,A_{n}) \to B$ is strict in $A_{i}$ just when each homomorphism $F(a_{1}, \ldots , a_{i-1},-,a_{i+1} \ldots, a_{n})$ is strict.  An inspection of the bijection of Lemma 1.3.4 of \emph{ibid.} makes it clear that the natural bijection $$\BICAT(A_{1}, \ldots, A_{n},B;C) \cong \BICAT(A_{1}, \ldots, A_{n};Hom(B,C))$$ respects strictness in $A_{i}$.\\
Turning to the unit, recall that $\BICAT(-;A)=A_{0}$.   The forgetful functor $(-)_{0}:\Bicat_{s} \to \Set$ has a left adjoint $F$ for general reasons -- see Section~\ref{section:homotopicalBicats} for more on this. It follows that we have a bijection $\Bicat_{s}(F1,A) \cong \BICAT(-;A)$ where $F1$ is the free bicategory on $1$.  Concretely, $F1$ has a single object $\bullet$ and a single generating 1-cell $e:\bullet \to \bullet$.  General morphisms are (non-empty) bracketed copies of $e$ such as $((ee)e)$, and two such morphisms are connected by a unique 2-cell, necessarily invertible.\\
By Theorem~\ref{Thm:multiToClosed} we obtain a symmetric semi-closed category $(\Bicat_{s},Hom,L)$ which restricts to a symmetric skew closed structure $(\Bicat_{s},Hom,F1,L,i,j)$.  As in the preceding examples the skew closed structure on $\Bicat_{s}$ is not closed and fails to extend to $\Bicat$.\\
In Section~\ref{section:homotopicalBicats} we further analyse this symmetric skew closed structure.  Accordingly we describe a few aspects of it in more detail.  
Firstly, let us describe the action of the functor $Hom(-,-):\Bicat^{op} \times \Bicat \to \Bicat$.  From a homomorphism $f:A \rightsquigarrow B$ the homomorphism $Hom(f,1):Hom(B,C) \to Hom(A,C)$ obtained by precomposition is always strict, and straightforward to describe.  The postcomposition map $Hom(1,f)=Hom(B,C) \to Hom(B,D)$ induced by a strict homomorphism $f:C \to D$ is equally straightforward.\\
Though not strictly required in what follows, for completness we mention the slightly more complex case where $f$ is non-strict. At $\eta:g \to h \in Hom(B,C)$ the pseudonatural transformation $f\eta:fg \to fh$ has components $f\eta_{a}:fga \to fha$ at $a \in B$; at $\alpha:a \to b$ the invertible 2-cell $(f\eta)_{\alpha}$:
 \begin{equation*}
\cd{
 fh\alpha \circ f\eta_{a} \ar@{=>}[r]^{\lambda_{f}} & f(h{\alpha} \circ \eta_{a}) \ar@{=>}[r]^{f\eta_{\alpha}} & f(\eta_{b} \circ g\alpha) \ar@{=>}[r]^{{\lambda_{f}}^{-1}} & f\eta_{b} \circ fg\alpha}
 \end{equation*}
 conjugates $f\eta_{\alpha}$ by the coherence constraints for $f$.  The action of $f_{*}$ on 2-cells is straightforward.  The coherence constraints $f(\eta \circ \mu) \cong f(\eta) \circ f(\mu)$ and $f(id_{g}) \cong id(fg)$ for $f^{*}$ are pointwise those for $f$.\\
The only knowledge required of $L:Hom(B,C) \to Hom(Hom(A,B),Hom(A,C))$ is that it has underlying function $$Hom(A,-):\Bicat(B,C) \to \Bicat(Hom(A,B),Hom(A,C)) \hspace{0.5cm}.$$  The unit map $i:Hom(F1,A) \to A$ evaluates at the single object $\bullet$ of $F1$ whilst $j:F1 \to Hom(A,A)$ is the unique strict homomorphism sending $\bullet$ to the identity on $A$.\\
This example can be modified to deal with 2-categories.  Let $\TWOCAT \subset \BICAT$ and $\twocat_{s} \subset \Bicat_{s}$ be the symmetric multicategory and category obtained by restricting the objects from bicategories to 2-categories.  Since $Hom(A,B)$ is a 2-category if $B$ is, we obtain a closed multicategory $\TWOCAT$ by restriction.  In this case we have a natural bijection $\twocat_{s}(1,A)\cong A_{0} = \TWOCAT(-;A)$. It follows that we obtain a symmetric skew closed structure $(\twocat_{s},Hom,L,1,i,j)$ with the same semi-closed structure as before, but with the simpler unit $1$.

\subsection{Lax morphisms}
Each of the above examples describes a symmetric skew closed structure arising from a symmetric closed multicategory.  In each case there are non-symmetric variants dealing with lax structures, of which we mention a few now.  These have the same units but different internal homs. In $\DLim_{s}$ the hom $[\f A,\f B]$ is the functor category $[A,B]$ equipped with $\f D$-limits pointwise in $B$.  In $\Perm_{s}$ the internal hom $[\f A,\f B]$ consists of lax monoidal functors and monoidal transformations.  For $\Bicat_{s}$ one can take $[A,B]$ to be the bicategory of homomorphisms and lax natural transformations from $A$ to $B$.

\section{Skew structures descending to the homotopy category}
In the present section we consider categories $\C$ equipped with a Quillen model structure as well as a skew monoidal or skew closed structure. 
We describe conditions under which the skew structures descend to the homotopy category $Ho(\C)$ and call the skew monoidal/closed structures on $\C$ \emph{homotopy monoidal/closed} if the induced structures on $Ho(\C)$ are \emph{genuinely monoidal/closed}.   Theorem~\ref{Theorem:TotalDerived} gives a complete description of how monoidal skew closed structure descends to the homotopy category.  Our analogue of Eilenberg and Kelly's theorem is Theorem~\ref{Theorem:hEK}: it allows us to recognise homotopy monoidal structure in terms of homotopy closed structure.\\
We assume some familiarity with the basics of Quillen model categories, as introduced in \cite{Quillen1967Homotopical}, and covered in Chapter 1 of \cite{Hovey1999Model}.  Let us fix some terminology and starting assumptions.  We assume that all model categories $\C$ have functorial factorisations.  It follows that $\C$ is equipped with cofibrant and fibrant replacement functors $Q$ and $R$ together with natural transformations $p:Q \to 1$ and $q:1 \to R$  whose components are respectively trivial fibrations and trivial cofibrations.  Let $j:\C_{c} \to \C$ and $j:\C_{f} \to \C$ denote the full subcategories of cofibrant and fibrant objects, through which $Q$ and $R$ respectively factor.  The four functors preserve weak equivalences and hence extend to the homotopy category.  At that level we obtain adjoint equivalences
\begin{equation*}
\cd{Ho(\C_{c}) \ar@/^1ex/[r]^{Ho(j)} & Ho(\C) \ar@/^{1ex}/[l]^{Ho(Q)}  && Ho(\C_{f}) \ar@/^1ex/[r]^{Ho(j)} & Ho(\C) \ar@/^{1ex}/[l]^{Ho(R)}}
\end{equation*}
with counit and unit given by $Ho(p)$ and its inverse, and $Ho(q)$ and its inverse respectively.  If a functor between model categories $F:\C \to \D$ preserves weak equivalences between cofibrant objects we can form its \emph{left derived functor} $F_{l}=Ho(FQ):Ho(\C) \to Ho(\D)$, equally $Ho(Fj)Ho(Q):Ho(\C) \to Ho(\C_{c}) \to Ho(\D)$.  If $G$ preserves weak equivalences between fibrant objects then $G_{r}=Ho(GR)=Ho(Gj)Ho(R)$ is its \emph{right derived functor.}
\subsection{Skew monoidal structure on the homotopy category}\label{section:hmonoidal}
Let $\C$ be a model category equipped with a skew monoidal structure $(\C,\otimes,I,\alpha,l,r)$.  Our interest is in left deriving this to a skew monoidal structure on $Ho(\C)$.  In the monoidal setting this was done in \cite{Hovey1999Model} and the construction in the skew setting, described below, is essentially identical.
\begin{AxiomM}
$\otimes:\C \times \C \to \C$ preserves cofibrant objects and weak equivalences between them and the unit $I$ is cofibrant.
\end{AxiomM}
The above assumption ensures that the skew monoidal structure on $\C$ restricts to one on $\C_{c}$ and that the restricted functor $\otimes:\C_{c} \times \C_{c} \to \C_{c}$ preserves weak equivalences.  Accordingly we obtain a skew monoidal structure $(Ho(\C_{c}),Ho(\otimes),I)$ with the same components as before. Transporting this along the adjoint equivalence $Ho(j):Ho(\C_{c}) \leftrightarrows Ho(\C):Ho(Q)$ yields a skew monoidal structure $$(Ho(\C),\otimes_{l},I,\alpha_{l},l_{l},r_{l})$$ on $Ho(\C)$.  We will often refer to $(Ho(\C),\otimes_{l},I)$ as the \emph{left-derived} skew monoidal structure since $\otimes_{l}$ is the left derived functor of $\otimes$.  On objects we have $A \otimes_{l} B=QA \otimes QB$ and an easy calculation shows that the constraints for the skew monoidal structure are given by the following maps in $Ho(\C)$.
\begin{equation}\label{eq:LD1}
\cd{Q(QA \otimes QB) \otimes QC \ar[r]^{\h{p \otimes 1}} & (QA \otimes QB) \otimes QC \ar[d]^{\h{\alpha}} \\ & QA \otimes (QB \otimes QC) \ar[r]^{\h{(1 \otimes p})^{-1}} & QA \otimes Q(QB \otimes QC)}
\end{equation}
\begin{equation}\label{eq:LD2}
\cd{QI \otimes QA \ar[rr]^{\h{p \otimes 1}}&&  I \otimes QA \ar[rr]^{\h{l}} && QA \ar[rr]^{\h{p}} && A}
\end{equation}
\begin{equation}\label{eq:LD3}
\cd{A \ar[rr]^{\h{p}^{-1}} && QA \ar[rr]^{r} & & {QA \otimes I} \ar[rr]^{(\h{1 \otimes p})^{-1}}  && QA \otimes QI}
\end{equation}

\begin{Definition}
Let $(\C,\otimes,I)$ be a skew monoidal structure on a model category $\C$ satisfying Axiom M.  We say that $\C$ is \emph{homotopy monoidal} if $(Ho(\C),\otimes_{l},I)$ is genuinely monoidal.
\end{Definition}
\begin{Proposition}\label{prop:hmonoidal}
Let $(\C,\otimes,I)$ be a skew monoidal category with a model structure satisfying Axiom M. The following are equivalent.
\begin{enumerate}
\item
$(\C,\otimes,I)$ is homotopy monoidal.
\item For all cofibrant $X,Y,Z$ the map $\alpha:(X \otimes Y) \otimes Z \to X \otimes (Y \otimes Z)$ is a weak equivalence, and for all cofibrant $X$ both maps $r:X \to X \otimes I$ and $l:I \otimes X \to X$ are weak equivalences.
\end{enumerate}
\end{Proposition}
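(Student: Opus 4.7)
The plan is to read off the monoidal axioms for $(Ho(\C),\otimes_{l},I)$ directly from the explicit formulas \eqref{eq:LD1}, \eqref{eq:LD2} and \eqref{eq:LD3} for the left derived constraints. Being genuinely monoidal means precisely that the natural transformations $\alpha_{l}$, $l_{l}$ and $r_{l}$ are pointwise invertible in $Ho(\C)$, so the whole proposition should reduce to matching invertibility of these derived constraints against the weak equivalence conditions in part (2).

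First I would invoke Axiom M to observe that every map of the form $p \otimes 1$ or $1 \otimes p$ evaluated at cofibrant arguments is a weak equivalence between cofibrant objects, and hence an isomorphism in $Ho(\C)$. Inspecting \eqref{eq:LD1}--\eqref{eq:LD3}, each of $\alpha_{l}$, $l_{l}$ and $r_{l}$ is a composite in $Ho(\C)$ of such $p$-maps with a single instance of the original constraint $\alpha$, $l$ or $r$ evaluated at the cofibrantly replaced objects $(QA,QB,QC)$, $QA$ and $QA$ respectively. It follows that $\alpha_{l}, l_{l}, r_{l}$ are invertible for all $A,B,C$ if and only if those distinguished instances of $\alpha$, $l$ and $r$ are weak equivalences for all $A,B,C$.

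For (2) $\Rightarrow$ (1) it then suffices to note that $QA, QB, QC$ are always cofibrant, so (2) applied to these supplies the required weak equivalences. For (1) $\Rightarrow$ (2), given cofibrant $X,Y,Z$, I would compare $\alpha$ at $(QX,QY,QZ)$ with $\alpha$ at $(X,Y,Z)$ using the naturality square of $\alpha$ whose horizontal maps are iterated tensors of the trivial fibrations $p:QX \to X$ between cofibrant objects, hence weak equivalences by Axiom M; the left vertical is a weak equivalence by the reduction above, so the right one is by two-out-of-three. Analogous naturality-plus-two-out-of-three arguments, using the naturality squares of $l:I \otimes - \to \id$ and $r:\id \to - \otimes I$, will handle the unit constraints.

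The only real subtlety is the routine bookkeeping needed to verify that, under Axiom M, all the $p$-tensored maps in sight really are weak equivalences between cofibrant objects. Since every argument under consideration is cofibrant and Axiom M was tailored precisely so that $\otimes$ is homotopically well behaved on $\C_{c}$, I anticipate no serious obstacle here.
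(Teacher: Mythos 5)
Your proposal is correct and follows essentially the same route as the paper: observe that the derived constraints \eqref{eq:LD1}--\eqref{eq:LD3} are $\alpha_{QA,QB,QC}$, $l_{QA}$, $r_{QA}$ conjugated by isomorphisms in $Ho(\C)$, then use Axiom M and naturality (the paper phrases your two-out-of-three step as an isomorphism in $Ho(\C)^{\atwo}$) to transfer between the $Q$-instances and arbitrary cofibrant instances, together with the standard fact that a map of $\C$ is a weak equivalence iff it becomes invertible in $Ho(\C)$.
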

\begin{proof}
Observe that the constraints~\eqref{eq:LD1},\eqref{eq:LD2} and \eqref{eq:LD3} are $\alpha_{QA,QB,QC}$, $l_{QA}$ and $r_{QA}$ conjugated by isomorphisms in $Ho(\C)$.  It follows that $(Ho(\C),\otimes_{l},I)$ is genuine monoidal just when for $\alpha_{QA,QB,QC}$, $l_{QA}$ and $r_{QA}$ are isomorphisms in $Ho(\C)$ for all $A,B$ and $C$.  Axiom M ensures for that cofibrant $A,B,C$ that we have isomorphisms $\alpha_{A,B,C} \cong \alpha_{QA,QB,QC}$, $l_{A} \cong l_{QA}$ and $r_{A} \cong r_{QA}$ in $Ho(\C)^{\atwo}$ so that the former maps are isomorphisms just when the latter ones are.  This proves the claim.
\end{proof}
\begin{Not}
We call $(\C,\otimes,I)$ \emph{homotopy symmetric monoidal} if $(Ho(\C),\otimes_{l},I)$ admits the further structure of a symmetric monoidal category, but emphasise that this refers to a symmetry on $Ho(\C)$ not necessarily arising from a symmetry on $\C$ itself.
\end{Not}
\subsection{Skew closed structure on the homotopy category}\label{section:hclosed}
Let $\C$ be a model category equipped with a skew closed structure  $(\C,[-,-],I,L,i,j)$. Our intention is to \emph{right derive} the skew closed structure to the homotopy category.  This construction, more complex than its monoidal counterpart, is closely related to the construction of a skew closed category $(\C,[Q-,-],I)$ from a closed comonad $Q$ \cite{Street2013Skew}.

\begin{AxiomC}
For cofibrant $X$ the functor $[X,-]$ preserves fibrant objects and trivial fibrations. For fibrant $Y$ the functor $[-,Y]$ preserves weak equivalences between cofibrant objects.  The unit $I$ is cofibrant.\begin{footnote}{ We could weaken Axiom C by requiring that $[X,-]$ preserves only weak equivalences between fibrant objects, rather than all trivial fibrations.  This is still enough to construct the skew closed structure of Theorem~\ref{Theorem:hclosed1} though the proof becomes slightly longer.  Because we need the stronger Axiom MC in the crucial monoidal skew closed case anyway, we emphasise the convenient Axiom C.}\end{footnote}
\end{AxiomC}
It follows from Axiom C that if $A$ is cofibrant then $[1,p_{B}]:[A,QB] \to [A,B]$ is a trival fibration.  Accordingly we obtain a lifting $k_{A,B}$ as below.
\begin{equation}\label{eq:k0}
\cd{Q[A,B] \ar[dr]_{p_{[A,B]}} \ar[rr]^{k_{A,B}} && [A,QB] \ar[dl]^{[1,p_{B}]} \\
& [A,B]}
\end{equation}
Because $I$ is cofibrant we also have a lifting $e$ as below.
\begin{equation}\label{eq:e}
\cd{I \ar[dr]_{1} \ar[rr]^{e} && QI \ar[dl]^{p_{I}} \\
& I}
\end{equation}
\begin{Lemma}\label{Theorem:LemmaK}
Let $(\C,[-,-],I)$ satisfy Axiom C.  Then each of the following four diagrams
\begin{equation}\label{eq:k1}
\def\objectstyle{\scriptstyle}
\def\labelstyle{\scriptstyle}
\cd{Q[QB,C] \ar[d]_{k} \ar[r]^<<<<{QL} & Q[[QA,QB],[QA,C]] \ar[r]^{Q[k,1]} & Q[Q[QA,B],[QA,C]] \ar[r]^{k} & [Q[QA,B],Q[QA,C]] \ar[d] ^{[1,k]} \\
[QB,QC] \ar[r]_<<<<{L} & [[QA,QB],[QA,QC]] \ar[rr]_{[k,1]} && [Q[QA,B],[QA,QC]]} 
\end{equation}
\begin{equation}\label{eq:k2}
\cd{Q[I,B] \ar[dr]_{Qi} \ar[rr]^{k} && [I,QB] \ar[dl]^{i} \\
& QB}
\end{equation}
\begin{equation}\label{eq:k3}
\cd{I \ar@/_2pc/[rrrr]_{j} \ar[r]^{e} & QI \ar[r]^{Qj} & Q[A,A] \ar[r]^{Q[p,1]} & Q[QA,A] \ar[r]^{k} & [QA,QA]}
\end{equation}
\begin{equation}\label{eq:k4}
\cd{Q[B,C] \ar[d]_{Q[f,g]} \ar[r]^{k_{B,C}} & [B,QC] \ar[d]^{[f,Qg]} \\
Q[A,D] \ar[r]_{k_{A.D}} & [A,QD]}
\end{equation}
commutes up to left homotopy.   Moreover, if $X$ is fibrant then the image of each diagram under $[-,X]$ commutes in $Ho(\C)$.
\end{Lemma}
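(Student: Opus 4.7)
Since $k_{A,B}$ and $e$ are defined as lifts against the trivial fibrations $[1,p_B]\colon[A,QB]\to[A,B]$ and $p_I\colon QI\to I$, my plan rests on the standard observation that two morphisms $X\rightrightarrows Y$ with $X$ cofibrant that become equal after post-composition with a trivial fibration $Y\to Z$ are left homotopic. Since the common sources of the maps in \eqref{eq:k1}--\eqref{eq:k4} are all cofibrant (being of the form $Q(-)$ or equal to the cofibrant object $I$), my strategy in each case is to produce, using Axiom~C, a trivial fibration into which both composites map and to verify that they land on the same morphism there.

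The three easier diagrams go as follows. For the naturality square \eqref{eq:k4} I post-compose with the trivial fibration $[1,p_D]$: the defining equation $[1,p]\circ k=p$ and naturality of $p$ collapse both composites to $[f,g]\circ p_{[B,C]}$. For \eqref{eq:k2} post-composition with $p_B$ reduces both sides to $i_B\circ p_{[I,B]}$ using naturality of $p$, the defining property of $k$, and the naturality of $i$ applied to $p_B$. For \eqref{eq:k3} post-composition with $[1,p_A]$ yields $[p_A,1]\circ j_A$ on the upper composite (using $p_I\circ e=1$, the defining property of $k$, and naturality of $p$) and $[1,p_A]\circ j_{QA}$ on the lower; these agree by the extranaturality of $j$ applied to $p_A$.

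The real bookkeeping lies in \eqref{eq:k1}. Here I post-compose with $[1,[1,p_C]]$, which is a trivial fibration by two applications of Axiom~C (using cofibrancy of $QA$ and of $Q[QA,B]$). Three uses of the identity $[1,p]\circ k=p$ together with naturality of $p$ collapse the upper path to $[k,1]\circ L_{QB,C}\circ p_{[QB,C]}$. The same expression arises from the lower path via the defining property of $k$ together with the naturality of $L$ in its second variable applied to $p_C\colon QC\to C$. This is where I expect the main combinatorial work: both paths involve $k$ in several positions and $L$ at two different sets of indices, so the naturalities and the unit identity $[1,p]\circ k=p$ have to be threaded in the correct order.

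For the moreover statement, suppose $X$ is fibrant and $f\simeq_l g\colon A\to B$ is a left homotopy with $A$ cofibrant; this cofibrancy hypothesis applies throughout since each relevant source is of the form $Q(-)$ or is $I$. Choose a cylinder $A\amalg A\to\mathrm{Cyl}(A)\xrightarrow{\sim}A$ with $\mathrm{Cyl}(A)$ cofibrant. Applying $[-,X]$ to the weak equivalence $\mathrm{Cyl}(A)\to A$ yields, by Axiom~C for fibrant $X$, a weak equivalence $[A,X]\to[\mathrm{Cyl}(A),X]$ split by both $[\iota_0,X]$ and $[\iota_1,X]$; these two morphisms therefore agree in $Ho(\C)$, and composing with $[h,X]$ for the homotopy $h\colon\mathrm{Cyl}(A)\to B$ produces $[f,X]=[g,X]$ in $Ho(\C)$. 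Applying this to each of \eqref{eq:k1}--\eqref{eq:k4} finishes the argument.
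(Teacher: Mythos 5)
Your proposal is correct and follows essentially the same route as the paper: left homotopy is detected by postcomposition with a trivial fibration out of a cofibrant source (Hovey, Proposition 1.2.5(iv)), using the same trivial fibrations $p_B$, $[1,p_A]$, $[1,p_D]$ for \eqref{eq:k2}--\eqref{eq:k4} and the evident one into $[Q[QA,B],[QA,C]]$ for \eqref{eq:k1}, after which the identities $[1,p]\circ k=p$ and the (extra)naturalities collapse both paths. Your cylinder-object argument for the ``moreover'' clause is just an explicit unwinding of the paper's appeal to the fact that a functor inverting weak equivalences between cofibrant objects identifies left homotopic maps.
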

Note that in ~\eqref{eq:k4} $A$ and $B$ are cofibrant and the morphisms $f:B \to A$ and $g:C \to D$ are arbitrary.
\begin{proof}
In each case we are presented with a pair of maps $f,g:U \rightrightarrows V$ with $U$ cofibrant.  To prove that $f$ and $g$ are left homotopic it suffices, by Proposition 1.2.5(iv) of \cite{Hovey1999Model}, to show that there exists a trivial fibration $h:V \to W$ with $h\circ f = h \circ g$. We take the trivial fibrations $[1,p_{[QA,QC]}]$, $p_{B}$, $[1,p_{A}]$ and $[1,p_{D}]$ respectively.  Each diagram, postcomposed with the relevant trivial fibration, is easily seen to commute.\\
For the second point observe that any functor $\C \to \D$ sending weak equivalences between cofibrant objects to isomorphisms identifies left homotopic maps - this follows the proof of Corollary 1.2.9 of \emph{ibid}.  Applying this to the composite of $[-,X]:\C \to \C^{op}$ and $\C^{op} \to Ho(\C)^{op}$ gives the result.
\end{proof}
Axiom C ensures that the right derived functor $$[-,-]_{r}:Ho(\C)^{op} \times Ho(\C) \to Ho(\C)$$ exists with value $[A,B]_{r} = [QA,RB]$.  The unit for the skew closed structure will be $I$.  Using Axiom C we form transformations $L_{r}$, $i_{r}$ and $j_{r}$ on $Ho(\C)$ as below.
\begin{equation}\label{eq:RD1}
\cd{[QA,RB]  \ar[d]_{[Qq,1]^{-1}} & & [Q[QC,RA],R[QC,RB]]\\
 [QRA,RB] \ar[r]^<<<<{L} &  [[QC,QRA],[QC,RB]] \ar[r]^{[k,1]} & [Q[QC,RA],[QC,RB]] \ar[u]_{[1,q]}}
\end{equation}
\begin{equation}\label{eq:RD2}
\cd{[QI,RA] \ar[r]^{[e,1]} & [I,RA] \ar[r]^{i} & RA \ar[r]^{q^{-1}} & A}
\end{equation}
\begin{equation}\label{eq:RD3}
\cd{I \ar[r]^{j} & [A,A] \ar[r]^{[p,1]} &[QA,A] \ar[r]^{[1,q]} & [QA,RA]}
\end{equation}
\begin{Theorem}\label{Theorem:hclosed1}
Let $\C$ be a model category equipped with a skew closed structure $(\C,[-,-],I)$ satisfying Axiom C. Then $Ho(\C)$ admits a skew closed structure $(Ho(\C),[-,-]_{r},I)$ with constraints as above.
\end{Theorem}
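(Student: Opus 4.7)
The plan is to verify in turn that $[-,-]_r$ is a well-defined bifunctor on $Ho(\C)$, that the transformations $L_r$, $i_r$, $j_r$ are (extra)natural, and finally that the five axioms C1--C5 hold. For bifunctoriality, Axiom C tells us that $[QA,R-]$ preserves weak equivalences in the second variable (the two-of-three property promotes preservation of trivial fibrations between fibrants to preservation of all weak equivalences between fibrants), while $[Q-,RB]$ preserves weak equivalences in the first by the fibrant-variable clause. Hence the point-set formula $[A,B]_r = [QA,RB]$ descends to a functor $Ho(\C)^{op}\times Ho(\C)\to Ho(\C)$.

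Naturality of $L_r$ in $B,C$ and extranaturality in $A$ reduce to the corresponding statements for $L$ together with the naturality of $p$, $q$, and the lift $k$. The naturality of $k$ up to left homotopy is \eqref{eq:k4}; since the codomain of $L_r$ is of the form $[\cdots,R(\cdots)]$, which is fibrant, the second half of Lemma~\ref{Theorem:LemmaK} ensures that this left homotopy becomes an equality in $Ho(\C)$. Extranaturality of $j_r$ and naturality of $i_r$ are handled analogously.

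The heart of the proof is verifying C1--C5 in $Ho(\C)$. In each case the strategy is the same: unfold the derived constraints using their definitions \eqref{eq:RD1}--\eqref{eq:RD3}, and then invoke Lemma~\ref{Theorem:LemmaK} to push every instance of $k$ past the structure maps. The crucial observation, which is what makes the strategy work, is that in each axiom the composite of interest takes values in an object of the form $[\cdots, RX]$; thus the ``$[-,X]$ for $X$ fibrant'' clause of Lemma~\ref{Theorem:LemmaK} applies, and each relation there gives an equality in $Ho(\C)$, not merely a left homotopy. For C2 and the unit condition in C4, we invoke \eqref{eq:k2} to rewrite the $i$--$k$ interaction; for C3 and C5 we use \eqref{eq:k3} together with the lift $e$ of \eqref{eq:e} to handle the unit maps via $j$; and for any axiom containing two or more iterates of $L_r$, \eqref{eq:k1} lets us commute the iterated lift past $L$.

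The main obstacle is C1, whose diagram contains three instances of $L$ and therefore three copies of the composite $[1,q]\circ[k,1]\circ L\circ[Qq,1]^{-1}$ of \eqref{eq:RD1}. Writing out both legs produces a large diagram in $\C$ whose commutativity in $Ho(\C)$ is not immediate; the correct strategy is to apply \eqref{eq:k1} repeatedly to push the various $k$'s past the copies of $L$, at which point the original axiom C1 in $\C$, together with the naturality of $p$ and $q$, closes the diagram. Once C1 has been dealt with in this way, the remaining axioms follow the same template with considerably less bookkeeping.
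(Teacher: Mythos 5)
Your proposal is correct and matches the paper's argument in all essentials: the same reduction of (extra)naturality in $Ho(\C)$ to naturality in $\C$, the same reliance on Lemma~\ref{Theorem:LemmaK} (with fibrancy of the relevant targets converting the left homotopies into identities in $Ho(\C)$), and the same axiom-by-axiom verification anchored on the corresponding axiom for $(\C,[-,-],I)$ itself. The only difference is organizational: the paper first builds a simpler skew closed structure on $Ho(\C_{f})$, where the $L$-constraint needs no conjugation by $[Qq,1]^{-1}$ and $[1,q]$, and then transports it along the adjoint equivalence $Ho(j)\dashv Ho(R)$ (adjusting by $[Qq,1]$ and $q^{-1}$ at the end), which keeps the five diagrams appreciably smaller than the ones you would have to draw working directly on $Ho(\C)$.
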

\begin{proof}
In order to keep the calculations relatively short we will first describe a slightly simpler skew closed structure on $Ho(\C_{f})$.  We then obtain the skew closed structure on $Ho(\C)$ by transport of structure.\\
So our main task is to construct a suitable skew closed structure on $Ho(\C_{f})$. Now Axiom C ensures that $[QA,B]$ is fibrant whenever $B$ is.  The restricted bifunctor $[Q-,-]:\C_{f}^{op} \times \C_{f} \to \C_{f}$ then preserves weak equivalences in each variable and so extends to a bifunctor $Ho([Q-,-])$ on $Ho(\C_{f})$.  For the unit on $Ho(\C_{f})$ we take $RI$.\\
The constraints are given by the following three maps.
\begin{equation}\label{eq:RD1}
\cd{[QA,B] \ar[r]^<<<<{L} & [[QC,QA],[QC,B]] \ar[r]^{[k,1]} & [Q[QC,A],[QC,B]]}
\end{equation}
\begin{equation}\label{eq:RD2}
\cd{[QRI,A] \ar[r]^{[Qq,1]} & [QI,A] \ar[r]^{[e,1]} & [I,A] \ar[r]^{i} & A}
\end{equation}
\begin{equation}\label{eq:RD3}
\cd{RI \ar[r]^{q^{-1}} & I \ar[r]^{j} & [A,A] \ar[r]^{[p,1]} & [QA,A]}
\end{equation}
We should explain why the above components are natural on $Ho(\C_{f})$ -- in the appropriate variance -- since consideration of extraordinary naturality is perhaps non-standard.\\
Given $F,G:\A \rightrightarrows \B$ and a family of maps $\{\eta_{A}:FA \to GA: A \in \A \}$ we can consider the class of morphisms $Nat(\eta) \subseteq Mor(\A)$ with respect to which $\eta$ is natural.  $Nat(\eta)$ is closed under composition and inverses in $\A$.   If $(\A,\W)$ is a category equipped with a collection of weak equivalences $\W$ then each arrow of $Ho(\A)$ is composed of morphisms in $\A$ together with formal inverses $w^{-1}$ where $w \in \W$.  It follows that the family $\{\eta_{A}:FA \to GA: A \in \A \}$ is natural in $Ho(\A)$ just when it is natural where restricted to $\A$.  Similarly given $S:\A^{op} \times \A \to \B$ and a family of morphisms $\{\theta_{A}:X \to S(A,A):A \in \A \}$ we can consider the class $Ex(\theta) \subseteq Mor(\A)$ with respect to which $\theta$ is extranatural.  This has the same closure properties as before.  It follows that the family $\theta_{A}:X \to S(A,A)$ is extranatural in $Ho(\A)$ just when it is extranatural when restricted to maps in $\A$.\\
Using this reasoning we deduce that ~\eqref{eq:RD2} and ~\eqref{eq:RD3} are natural.  We likewise obtain the naturality of the $L$-component  of ~\eqref{eq:RD1} in each variable.  So it suffices to show that $(k_{C,A},1):[[QC,QA],[QC,B]] \to [Q[QC,A],[QC,B]]$ is natural in each variable.  This follows from Diagram~\eqref{eq:k4} of Lemma~\ref{Theorem:LemmaK}.\\
We verify the diagrams (C1-C5) below.  Each involves an instance of the corresponding diagram (C1-C5) for the skew closed structure on $\C$ itself, an application of Lemma~\ref{Theorem:LemmaK} and straightforward applications of naturality.
\begin{itemize}
\item[(C2)]
\begin{equation*}
\def\objectstyle{\scriptstyle}
\def\labelstyle{\scriptstyle}
\cd{[Q[QA,A],[QA,C]] \ar[r]^{[Q[p,1],1]} & [Q[A,A],[QA,C]] \ar[r]^{[Qj,1]} & [QI,[QA,C]] \ar[dr]_{1} \ar[r]^{[Qq,1]^{-1}} & [QRI,[QA,C]] \ar[d]^{[Qq,1]} \\
& & & [QI,[QA,C]] \ar[d]^{[e,1]} \\
[[QA,QA],[QA,C]]\ar[uu]^{[k,1]} \ar[rrr]_{[j,1]} &&& [I,[QA,C]] \ar[d]^{i}\\
[QA,C]\ar[u]^{L} \ar[rrr]_{1} &&& [QA,C]}
\end{equation*}
\item[(C3)]
\begin{equation*}
\def\objectstyle{\scriptstyle}
\def\labelstyle{\scriptstyle}
\cd{RI \ar[r]^{q^{-1}} & I \ar[dr]_{j} \ar[r]^{j} & [B,B] \ar[d]^{L} \ar[r]^{[p,1]} & [QB,B] \ar[d]^{L} \\
 & & [[QA,B],[QA,B]] \ar[dr]_{[p,1]} \ar[r]^{[[1,p],1]} & [[QA,QB],[QA,B]] \ar[d]^{[k,1]}\\
&&& [Q[QA,B],[QA,B]]} 
\end{equation*}
\vspace{1cm}
\item[(C4)]
\begin{equation*}
\def\objectstyle{\scriptstyle}
\def\labelstyle{\scriptstyle}
\cd{
&& [[QRI,QB],[QRI,C]] \ar[r]^{[k,1]} \ar[dr]_{[1,[Qq,1]]} & [Q[QRI,B],[QRI,C]]\ar[dr]^{[1,[Qq,1]]} \\\
&&& [[QRI,QB],[QI,C]] \ar[r]^{[k,1]} & [Q[QRI,B],[QI,C]] \ar[dd]^{[1,[e,1]]}\\
[QB,C] \ar[uurr]^{L} \ar[ddd]_{[Qi,1]}\ar@/_0.8pc/[ddr]_{[i,1]} \ar[dr]^{L} \ar[rr]^{L} && [[QI,QB],[QI,C]] \ar[ur]^{[[Qq,1],1]} \ar[d]^{[1,[e,1]]} \ar[r]^{[k,1]} & [Q[QI,B],[QI,C]]\ar[d]^{[1,[e,1]]} \ar[ur]_{[Q[Qq,1],1]}\\
& [[I,QB],[I,C]]\ar[d]^{[1,i]} \ar[r]^{[[e,1],1]} & [[QI,QB],[I,C]] \ar[d]^{[1,i]} \ar[r]^{[k,1]} & [Q[QI,B],[I,C]] \ar[d]^{[1,i]} \ar[r]^{[Q[Qq,1],1]} & [Q[QRI,B],[I,C]] \ar[d]^{[1,i]} \\
& [[I,QB],C] \ar[dl]_{[k,1]} \ar[r]^{[[e,1],1]} & [[QI,QB],C] \ar[r]^{[k,1]} & [Q[QI,B],C] \ar[r]^{[Q[Qq,1],1]} & [Q[QRI,B],C] \\
[Q[I,B],C] \ar@/_1pc/[urrr]_{[Q[e,1],1]}
}
\end{equation*}
\vspace{1cm}
\item[(C5)]
\begin{equation*}
\def\objectstyle{\scriptstyle}
\def\labelstyle{\scriptstyle}
\cd{RI \ar[r]^{q^{-1}} & I \ar@/_{1.8pc}/[dd]_{1} \ar[d]_{j} \ar[r]^{j} & [RI,RI] \ar[d]^{[q,1]} \ar[r]^{[p,1]} & [QRI,RI] \ar[d]^{[Qq,1]}\\
& [I,I] \ar[d]_{i} \ar[r]^{[1,q]} & [I,RI] \ar[d]_{i} \ar[dr]_{1} \ar[r]^{[p,1]} & [QI,RI] \ar[d]^{[e,1]} \\
& I \ar[r]_{q} & RI & [I,RI]\ar[l]^{i}}
\end{equation*}
\end{itemize}
\thispagestyle{empty}
\begin{landscape}
\begin{itemize}
\item[(C1)]
\begin{equation*}
\def\objectstyle{\scriptscriptstyle}
\def\labelstyle{\scriptscriptstyle}
\cd{
& & [Q[QA,C],[QA,D]] \ar[rr]^{L} & & [[Q[QA,B],Q[QA,C]],[Q[QA,B],[QA,D]]] \ar[dd]^{[k,1]} \\\\
[QC,D] \ar[dddd]^{L} \ar[r]^{L}  & [[QA,QC],[QA,D]] \ar[dd]^{L} \ar[r]^{L} \ar[uur]^{[k,1]} & [[[Q[QA,B],[QA,QC]],[Q[QA,B],[QA,D]]] \ar[uurr]^{[[1,k],1]} \ar[dd]^{[[k,1],1]} & & [Q[Q[QA,B],[QA,C]],[Q[QA,B],[QA,D]]] \ar[dddd]^{[Q[k,1],1]} \\\\
& [[[QA,QB],[QA,QC]],[[QA,QB],[QA,D]]] \ar[r]^{[1,[k,1]]} \ar[dd]^{[L,1]} & [[[QA,QB],[QA,QC]],[Q[QA,B],[QA,D]]] \ar[dd]^{[L,1]} \\\\
[[QB,QC],[QB,D]] \ar[dd]^{[k,1]} \ar[r]^{[1,L]} & [[QB,QC],[[QA,QB],[QA,D]]] \ar[ddr]^{[k,1]} \ar[r]^{[1,[k,1]]} & [[QB,QC],[Q[QA,B],[QA,D]]] \ar[ddrr]_{[k,1]} &&  [Q[[QA,QB],[QA,C]],[Q[QA,B],[QA,D]]] \ar[dd]^{[QL,1]}\\\\
[Q[QB,C],[QB,D]] \ar[rr]^{[1,L]} && [Q[QB,C],[[QA,QB],[QA,D]]] \ar[rr]^{[1,[k,1]]} && [Q[QB,C],[Q[QA,B],[QA,D]]]}
\end{equation*}
\end{itemize}
\end{landscape}
We now transport the skew closed structure along the adjoint equivalence $Ho(j):Ho(\C_{f}) \leftrightarrows Ho(\C):Ho(R)$.  The skew closed structure obtained in this way has bifunctor
\begin{equation*}
\def\objectstyle{\scriptstyle}
\def\labelstyle{\scriptstyle}
\cd{Ho(\C)^{op} \times Ho(\C) \ar[rr]^<<<<<<<<<{Ho(R)^{op} \times Ho(R)}  & & Ho(\C_{f})^{op} \times Ho(\C_{f}) \ar[rr]^<<<<<<<{Ho([Q-,-])} && Ho(\C_{f}) \ar[r]^{Ho(j)} & Ho(\C)}
\end{equation*}
and unit given by
\begin{equation*}
\def\objectstyle{\scriptstyle}
\def\labelstyle{\scriptstyle}
\cd{I \ar[r]^<<<<{RI} & {Ho(\C_{f})}\ar[r]^{Ho(j)} & Ho(\C) \hspace{0.5cm} .}
\end{equation*}
So $Ho([QR-,R-])$ and $RI$ respectively.  Neither is quite as claimed.  We finally obtain the skew closed structure stated in the theorem by transferring this last skew closed structure along the isomorphisms of bifunctors $[Qq,1]:Ho([QR-,R-]) \to Ho([Q-,R-])$ and of units $q^{-1}:RI \to I$.
\end{proof}
We often refer to $(Ho(\C),[-,-]_{r},I)$ as the \emph{right derived} skew closed structure since $[-,-]_{r}$ is the right derived functor of $[-,-]$.
\begin{Definition}
Let $\C$ be a model category with a skew closed structure $(\C,[-,-],I)$ satisfying Axiom C.  We say that $(\C,[-,-],I)$ is \emph{homotopy closed} if the right derived skew closed structure $(Ho(\C),[-,-]_{r},I)$ is genuinely closed.
\end{Definition}
\begin{Proposition}\label{prop:hclosed2}
Let $(\C,[-,-],I)$ be a skew closed category satisfying Axiom C.  Then 
$(\C,[-,-],I)$ is homotopy closed if and only if the following two conditions 
are met.
\begin{enumerate}
\item For all cofibrant $A$ and fibrant $B$ the map $$v=\C(j,1) \circ [A,-] :\C(A,B) \to \C(I,[A,B])$$ is a bijection on homotopy classes of maps.  
\item For all fibrant $A$ the map $i:[I,A] \to A$ is a weak equivalence.
\end{enumerate}
\end{Proposition}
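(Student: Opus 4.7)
The plan is to show separately that the right-derived skew closed structure $(Ho(\C),[-,-]_{r},I)$ is right normal if and only if condition (2) holds, and left normal if and only if condition (1) holds. Since closedness means exactly left plus right normality, the result will follow.

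For right normality I would exploit the factorisation of $i_{r}$ given in its very definition as the composite $[QI,RA] \xrightarrow{[e,1]} [I,RA] \xrightarrow{i_{RA}} RA \xrightarrow{q^{-1}} A$ in $Ho(\C)$. Since $I$ is cofibrant, $e:I \to QI$ is a weak equivalence between cofibrant objects, so Axiom C applied to $[-,RA]$ makes $[e,1]$ a weak equivalence; and $q^{-1}$ is always invertible in $Ho(\C)$. Hence $i_{r}$ is invertible at $A$ if and only if $i_{RA}$ is a weak equivalence. Axiom C also tells us that $[I,-]$ preserves trivial fibrations, hence by Ken Brown's lemma preserves weak equivalences between fibrant objects; inspecting the naturality square of $i$ at $q_{B}:B \to RB$ for $B$ fibrant then yields that $i_{B}$ is a weak equivalence if and only if $i_{RB}$ is, so the condition on $i_{r}$ reduces to precisely (2).

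For left normality I would first work with the simpler skew closed structure on $Ho(\C_{f})$ constructed during the proof of Theorem~\ref{Theorem:hclosed1}, which has bifunctor $[Q-,-]$ and unit $RI$, and then transport along the adjoint equivalence $Ho(\C_{f}) \simeq Ho(\C)$. The crucial calculation is to rewrite $j'_{r} = [p,1] \circ j_{A} \circ q^{-1}:RI \to [QA,A]$ using the extranaturality of $j$ at $p_{A}:QA \to A$, which gives $[p,1] \circ j_{A} = [1,p] \circ j_{QA}$. Post-composing with $[1,f]$ for a representative $f:A \to B$ in $\C_{f}$ then identifies $v'_{r}:Ho(\C_{f})(A,B) \to Ho(\C_{f})(RI,[QA,B])$ with the original $v_{QA,B}: \C(QA,B) \to \C(I,[QA,B])$ passed to homotopy classes, modulo the canonical bijections coming from composition with $q^{-1}$ and from $Ho(\C_{f})(A,B) \cong \C(QA,B)/{\sim}$.

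Consequently, left normality of the derived structure amounts to $v_{QA,B}$ being a bijection on homotopy classes for all fibrant $A,B$, which is certainly implied by (1). For the converse, given an arbitrary cofibrant $A'$, I would lift the trivial cofibration $q_{A'}:A' \to RA'$ against the trivial fibration $p_{RA'}:QRA' \to RA'$ to obtain a weak equivalence $h:A' \to QRA'$ between cofibrant objects; by Axiom C, $[h,B]$ is then a weak equivalence between fibrant objects for every fibrant $B$, and the naturality square of $v$ at $h$ reduces the bijectivity of $v_{A',B}$ on homotopy classes to that of $v_{QRA',B}$. I expect the main obstacle to be this final replacement argument, which bridges the gap between arbitrary cofibrant objects and those of the specific form $QA$; once it is in place the equivalence of homotopy closedness with (1) and (2) falls out.
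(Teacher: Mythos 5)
Your decomposition into right and left normality of $(Ho(\C),[-,-]_{r},I)$ is exactly the paper's, and your treatment of right normality -- factorise $i_{r}$, note that $[e,1]$ and $q^{-1}$ are already invertible in $Ho(\C)$, then pass between $i_{RA}$ and $i_{A}$ for fibrant $A$ via Ken Brown's lemma and the naturality square at $q$ -- is the paper's argument. For left normality you take a genuinely different route. The paper stays in $Ho(\C)$: the normality map is natural in both variables, so it suffices to check it for cofibrant $A$ and fibrant $B$; there the vertical maps $\C(A,B) \to Ho(\C)(A,B)$ and $\C(I,[QA,RB]) \to Ho(\C)(I,[QA,RB])$ are surjections identifying precisely homotopic maps, and the induced map on hom-sets of $\C$ is, by definition of $[-,-]_{r}$ and $j_{r}$, the composite $\C(1,[p,q]) \circ v_{A,B}$, whose first factor is a bijection on homotopy classes because $[p,q]:[A,B] \to [QA,RB]$ is a weak equivalence of fibrant objects and $I$ is cofibrant. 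You instead compute inside the auxiliary structure on $Ho(\C_{f})$, which lands you on $v_{QA,B}$ for $A$ merely fibrant and therefore forces the extra replacement step $A' \rightsquigarrow QRA'$; that step is executed correctly (the lift of $q_{A'}$ through $p_{RA'}$, Axiom C applied to $[-,B]$, and the naturality square of $v$ all work), but it is avoidable. One caution: your identification of $v'_{r}$ with $v_{QA,B}$ is verified only on ``representatives $f:A \to B$ in $\C_{f}$'', and when $A$ is fibrant but not cofibrant the image of $\C_{f}(A,B)$ in $Ho(\C_{f})(A,B)$ need not be everything; you still need a naturality reduction (to $A$ both cofibrant and fibrant, or a transport to $Ho(\C)$ first, as the paper does) before reading the normality condition off on homotopy classes. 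With that repair your argument goes through; the paper's is shorter because the reduction to cofibrant $A$ happens before any computation rather than after.
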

\begin{proof}
We will show that (1) and (2) amount to left and right normality of $(Ho(\C),[-,-]_{r},I)$ respectively.  Now $Ho(\C)$ is left normal just when
\begin{equation*}\label{eq:local}
\cd{
Ho(\C)(A,B) \ar[rrrr]^{Ho(\C)(j_{r},1) \circ [A,-]_{r}}  && & & Ho(\C)(I,[A,B]_{r})
}
\end{equation*}
is a bijection for all $A$ and $B$.  As in any skew closed category this map is natural in both variables.  Since we have isomorphisms $QA \to A$ and $B \to RB$ in $Ho(\C)$ the above map will be an isomorphism for all $A,B$ just when it is so for all cofibrant $A$ and fibrant $B$.  For such $A$ and $B$ we consider the diagram
\begin{equation*}
\cd{
\C(A,B) \ar[rrrr]^{\C(j,1) \circ \C([p,q],1) \circ [QA,R-]} \ar[d] &&  & & \C(I,[QA,RB]) \ar[d]\\
Ho(\C)(A,B) \ar[rrrr]^{[Ho(\C)(j_{r},1) \circ [A,-]_{r}} & & &&  Ho(\C)(I,[QA,RB])}
\end{equation*}
which is commutative by definition of $[A,-]_{r}$ and $j_{r}$.  The left and right vertical morphisms are surjective and identify precisely the homotopic maps.  It follows that the bottom row is invertible just when the top row induces a bijection on homotopy classes.  By naturality of $p$ and $q$ we can rewrite the top row as
\begin{equation*}
\cd{
\C(A,B) \ar[rrr]^{\C(j,1) \circ [A,-]} & & & \C(I,[A,B]) \ar[rr]^{\C(1,[p,q])} & & \C(I,[QA,RB]) \hspace{0.5cm} .}
\end{equation*}
Axiom C ensures that $[p,q]:[A,B] \to [QA,RB]$ is a weak equivalence between fibrant objects.  Since $I$ is cofibrant it follows that $\C(1,[p,q])$ is a bijection on homotopy classes.  Therefore the above composite is a bijection on homotopy classes just when its left component is.\\
Now $Ho(\C)$ is right normal just when $i_{r}=q^{-1} \circ i_{RA} \circ [e,1]:[QI,RA] \to [I,RA] \to RA \to A$ is invertible; equally just when $i_{RA}:[I,RA] \to RA$ is invertible for each $A$.  For fibrant $A$ we have $i_{A} \cong i_{RA}$ in $Ho(\C)^{\atwo}$ and the result follows.
\end{proof}

\subsubsection{The symmetric case}
Consider a skew closed structure $(\C,[-,-],I)$ satisfying Axiom C.  By Theorem~\ref{Theorem:hclosed1} we may form the right derived skew closed structure $(Ho(\C),[-,-]_{r},I)$.  Then from a symmetry isomorphism $s:[A,[B,C]] \cong [B,[A,C]]$ we can define a symmetry isomorphism $s_{r}:[A,[B,C]_{r}]_{r} \cong [B,[A,C]_{r}]_{r}$ on the right derived internal hom as below.

\begin{equation}\label{eq:DSym}
\def\objectstyle{\scriptstyle}
\def\labelstyle{\scriptstyle}
\cd{[QA,R[QB,RC]] \ar[r]^<<<<{[1,q]^{-1}} & [QA,[QB,RC]] \ar[r]^{s} & [QB,[QA,RC]] \ar[r]^{[1,q]} & [QB,R[QA,RC]]}
\end{equation}
\begin{Proposition}\label{prop:derivedSymmetry}
Consider $(\C,[-,-],I)$ satisfying Axiom C, so we have the skew closed structure $(Ho(\C),[-,-]_{r},I)$ of Theorem~\ref{Theorem:hclosed1}.\\
If a natural isomorphism $s:[A,[B,C]] \cong [B,[A,C]]$ satisfies any of $S1-S4$ then so does $s_{r}:[A,[B,C]_{r}]_{r} \cong [B,[A,C]_{r}]_{r}$.  In particular, if $(\C,[-,-],I,s)$ is symmetric skew closed then so is $(Ho(\C),[-,-]_{r},I,s_{r})$.
\end{Proposition}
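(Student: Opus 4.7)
My plan is to proceed in two stages, mirroring the proof of Theorem~\ref{Theorem:hclosed1}. First I would establish the analogous result for the simpler skew closed structure on $Ho(\C_f)$ with internal hom $[Q-,-]$ and unit $RI$ constructed in that proof, and then transport the symmetry along the adjoint equivalence $Ho(j): Ho(\C_f) \leftrightarrows Ho(\C): Ho(R)$ to obtain $s_r$ on $Ho(\C)$.

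For the first stage, I would observe that the natural isomorphism $s_{A,B,C}:[A,[B,C]] \cong [B,[A,C]]$ restricts, at components of the form $s_{QA,QB,C}:[QA,[QB,C]] \cong [QB,[QA,C]]$ with $C$ fibrant, to a candidate symmetry $\bar{s}$ for the skew closed structure on $Ho(\C_f)$, whose iterated internal hom is exactly $[QA,[QB,C]]$. Each of $S1$--$S4$ for $\bar{s}$ would be reduced to the corresponding axiom for $s$. Axioms $S1$ and $S2$ depend only on $s$ and should transfer directly. Axioms $S3$ and $S4$ involve the other structure maps on $Ho(\C_f)$, which are composites of the original $L$, $i$, $j$ with the correction maps $k$, $e$, $q$ introduced in the proof of Theorem~\ref{Theorem:hclosed1}. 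The required diagrams would follow by combining the original axiom for $s$ with naturality of $s$, together with the homotopy-commutative squares of Lemma~\ref{Theorem:LemmaK} -- which, after postcomposition with $[-,X]$ for fibrant $X$, actually commute in $Ho(\C)$.

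I expect $S3$ to be the main obstacle. Its diagram interleaves $s$ with $L$, and since the derived $L$ on $Ho(\C_f)$ is $[k,1] \circ L$, the correction factor $[k,1]$ appears at different positions on the two sides of the diagram after applying $s$. To reconcile this I would split the required diagram into the original $S3$ instance for $s$, a naturality square for $s$ in its contravariant variable applied to $k$, and an instance of ~\eqref{eq:k4} from Lemma~\ref{Theorem:LemmaK}, all chased in $Ho(\C)$. The treatment of $S4$ is analogous but simpler, using ~\eqref{eq:k2} and ~\eqref{eq:k3} to handle $i$ and $j$.

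For the second stage, transport of structure along an equivalence of categories preserves symmetric skew closed structure: $\bar{s}$ on $(Ho(\C_f), [Q-,-], RI)$ transports to a symmetry on $(Ho(\C), [QR-, R-], RI)$, and the further conjugations by $[Qq, 1]$ and $q^{-1}$ used at the end of the proof of Theorem~\ref{Theorem:hclosed1} to arrive at the stated skew closed structure $(Ho(\C), [-,-]_r, I)$ convert this into precisely the symmetry $s_r$ of equation~\eqref{eq:DSym}. Chaining the two stages yields the desired implication: whichever of $S1$--$S4$ hold for $s$ hold for $\bar{s}$, and hence for $s_r$, so that $(Ho(\C),[-,-]_r,I,s_r)$ is symmetric skew closed whenever $(\C,[-,-],I,s)$ is.
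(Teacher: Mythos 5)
Your proposal is correct and follows essentially the same route as the paper: define the symmetry on $Ho(\C_{f})$ by the components $s_{QA,QB,C}$, note that $S1$ and $S2$ transfer immediately, verify $S3$ and $S4$ by pasting the original axiom with naturality squares for $s$ and the homotopy-commutative diagrams of Lemma~\ref{Theorem:LemmaK}, and then transport along the equivalence and the conjugating isomorphisms exactly as in Theorem~\ref{Theorem:hclosed1}.
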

\begin{proof}
As in the proof of Theorem~\ref{Theorem:hclosed1} we transport the structure from $Ho(\C_{f})$.  We can extend the skew closed structure $(Ho(\C_{f}), Ho[Q-,-],RI)$ described therein by a symmetry transformation $s_{r}$ whose component at $(A,B,C)$ is
\begin{equation*}
\cd{[QA,[QB,C]] \ar[rr]^{s_{QA,QB,C}} && [QB,[QA,C]]}
\end{equation*}
Since the components of $s_{r}$ are just those of $s$ it follows that $S1$ and $S2$ hold in $Ho(\C_{f})$ if they do so in $\C$.  The diagrams for $S3$ and $S4$ are below.
\begin{itemize}
\item[(S3)]
\begin{equation*}
\def\objectstyle{\scriptstyle}
\def\labelstyle{\scriptstyle}
\cd{
[QA,[QB,C]] \ar[dd]_{s} \ar[rr]^{L} && [[QD,QA],[QD,[QB,C]]] \ar[d]^{[1,s]} \ar[rr]^{[k,1]} && [Q[QD,A],[QD,[QB,C]]] \ar[d]^{[1,s]}\\
&& [[QD,QA],[QB,[QD,C]]] \ar[d]^{s} \ar[rr]^{[k,1]} && [Q[QD,A],[QB,[QD,C]]] \ar[d]^{s} \\
[QB,[QA,C]] \ar[rr]^{[1,L]} && [QB,[[QD,QA],[QD,C]]] \ar[rr]^{[1,[k,1]]} && [QB,[Q[QD,A],[QD,C]]]
}
\end{equation*}
\item[(S4)]
\begin{equation*}
\def\objectstyle{\scriptstyle}
\def\labelstyle{\scriptstyle}
\cd{
[QA,B] \ar[ddd]_{1} \ar[r]^<<<<{L} & [[QA,QA],[QA,B]] \ar[dd]^{[j,1]} \ar[r]^{[k,1]} & [Q[QA,A],[QA,B]] \ar[r]^{[Q[p,1],1]} &[Q[A,A],[QA,B]] \ar[d]^{[Qj,1]}\\
&&& [QI,[QA,B]]\ar[dl]_{1} \ar[d]^{[Qq,1]^{-1}}\\
& [I,[QA,B]] \ar[d]^{s} &[QI,[QA,B]] \ar[d]^{s} \ar[l]^{[e,1]} & [QRI,[QA,B]] \ar[l]^{[Qq,1]} \ar[d]^{s} \\
[QA,B] & [QA,[I,B]] \ar[l]^{[1,i]} & [QA,[QI,B]] \ar[l]^{[1,[e,1]]} & [QA,[QRI,B]] \ar[l]^{[1,[Qq,1]]}}
\end{equation*}
\end{itemize}
Therefore  $(Ho(\C_{f}), Ho[Q-,-],RI,s_{r})$ satisfies any of $S1-S4$ when $(\C,[-,-],I,s)$ does so.  The desired structure on $Ho(\C)$  is obtained by transporting from the equivalent $Ho(\C_{f})$ as in Theorem~\ref{Theorem:hclosed1}.
\end{proof}

\subsection{Monoidal skew closed structure on the homotopy category and the homotopical version of Eilenberg and Kelly's theorem}
Let $\C$ be a model category equipped with a monoidal skew closed structure $(\C,\otimes,[-,-],I)$.  In order to derive the tensor-hom adjunctions $- \otimes QA \dashv [QA,-]$ to the homotopy category, we will make use of the concept of a Quillen adjunction.\\
An adjunction $F:\C \leftrightarrows D:U$ of model categories is said to be a \emph{Quillen adjunction} if the right adjoint $U:\D \to \C$ preserves fibrations and trivial fibrations.  One says that $U$ is \emph{right Quillen}.  This is equivalent to asking that $F$ preserves cofibrations and trivial cofibrations, in which case $F$ is said to be \emph{left Quillen}.  The derived functors $F_{l}$ and $U_{r}$ then exist and form an adjoint pair $F_{l}:Ho(\C) \leftrightarrows Ho(\D):U_{r}$.  This works as follows.  At cofibrant $A$ and fibrant $B$ the isomorphisms $\phi_{A,B}:\D(FA,B) \cong \C(A,UB)$ are well defined on homotopy classes and so give natural bijections $$\phi_{A,B}:Ho(\D)(FA,B) \cong Ho(\C)(A,UB)$$ -- we use the same labelling.  At $A,B \in Ho(\C)$ the hom-set bijections $$\phi^{d}_{A,B}:Ho(\D)(F_{l}A,B) \cong Ho(\C)(A,U_{r}B)$$ are then given by conjugating the $\phi_{A,B}$ as below:
\begin{equation*}
\def\objectstyle{\scriptstyle}
\def\labelstyle{\scriptstyle}
\cd{Ho(\D)(FQA,B) \ar[r]^{(1,q)} & Ho(\D)(FQA,RB) \ar[r]^{\phi_{A,B}} & Ho(\C)(QA,URB) \ar[r]^{(p,1)^{-1}} & Ho(\C)(A,URB)}
\end{equation*}
It follows that the unit of the derived adjunction -- the \emph{derived unit} -- is given by 
\begin{equation*}
\cd{A \ar[rr]^{p_{A}^{-1}} && QA \ar[rr]^{\eta_{QA}} && UFQA \ar[rr]^{Uq_{FQA}} && URFQA}
\end{equation*}
whilst the derived counit admits a dual description.
\begin{Proposition}\label{prop:AxiomGood}
For $\C$ a model category and $(\C,\otimes,[-,-],I)$ monoidal skew closed the following are equivalent.
\begin{enumerate}
 \item For cofibrant $X$ the functor $- \otimes X$ is left Quillen and for each cofibrant $Y$ the functor $Y \otimes -$ preserves weak equivalences between cofibrant objects.
\item For cofibrant $X$ the functor $[X,-]$ is right Quillen and for fibrant $Y$ the functor $[-,Y]$ preserves weak equivalences between cofibrant objects.
\end{enumerate}
\end{Proposition}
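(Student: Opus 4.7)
The plan is to reduce the proposition to an interplay between the tensor-hom adjunction $-\otimes X \dashv [X,-]$ and the standard hom-detection of weak equivalences, separating the Quillen conditions from the weak-equivalence-preservation conditions.

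First I would observe that the first halves of (1) and (2) are immediately equivalent: since $-\otimes X \dashv [X,-]$, a standard fact about Quillen adjunctions says that $-\otimes X$ is left Quillen if and only if $[X,-]$ is right Quillen. Under this common assumption I would record two consequences for later use. On the one hand $-\otimes X$, being a left adjoint, preserves the initial object, and so by also preserving cofibrations it preserves cofibrant objects; in particular if $X$ and $A$ are cofibrant then so is $X \otimes A$. Dually $[X,-]$ preserves fibrant objects whenever $X$ is cofibrant.

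The tool for the second halves is the standard detection criterion: a map $g$ between cofibrant objects is a weak equivalence if and only if $\C(g,W)/{\sim}$ is a bijection on homotopy classes for every fibrant $W$, together with the dual statement for maps between fibrant objects. For $(1)\Rightarrow(2)$, take a weak equivalence $f\colon A \to A'$ between cofibrant objects and a fibrant $Y$. By the preceding observations $[A,Y]$ and $[A',Y]$ are fibrant, so to show that $[f,Y]\colon [A',Y] \to [A,Y]$ is a weak equivalence it suffices to check that $\C(X,[f,Y])/{\sim}$ is bijective for every cofibrant $X$. The adjunction $\phi$ transports this to $\C(X \otimes f,Y)/{\sim}$. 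Now $X \otimes f$ is a map between cofibrant objects by the cofibrancy preservation above, and it is a weak equivalence by hypothesis (1) applied with the cofibrant object $X$ on the left; the detection criterion therefore delivers the required bijection.

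The reverse implication $(2)\Rightarrow(1)$ is completely dual: given cofibrant $Y$ and a weak equivalence $f\colon A \to A'$ between cofibrants, one tests the map $Y \otimes f$ (between cofibrants) by mapping into an arbitrary fibrant $W$, uses $\phi$ to transport the question to $\C(Y,[f,W])/{\sim}$, and invokes the fact that $[f,W]$ is a weak equivalence between fibrants to conclude. The only obstacle is the careful bookkeeping of which objects are cofibrant and which are fibrant at each stage; beyond the adjunction $\phi$ and the two directions of the hom-detection criterion, no new machinery is needed.
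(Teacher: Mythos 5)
Your proof is correct, and it agrees with the paper's on the first step: both observe that the two ``Quillen'' halves of (1) and (2) are equivalent by the standard fact that $-\otimes X$ is left Quillen iff its right adjoint $[X,-]$ is right Quillen. Where you diverge is in handling the two ``preserves weak equivalences'' halves. The paper disposes of these in one line by noting that, for a weak equivalence $f\colon A\to B$ between cofibrant objects, the natural transformation of left Quillen functors $-\otimes f$ and the natural transformation of right Quillen functors $[f,-]$ are mates, and then citing Corollary~1.4.4(b) of Hovey, which says precisely that such a mate pair consists of weak equivalences on cofibrant objects on one side iff it does on fibrant objects on the other. You instead reprove that corollary in this instance from first principles: you detect weak equivalences between cofibrant (resp.\ fibrant) objects via bijectivity of $\C(-,W)/{\sim}$ for fibrant $W$ (resp.\ $\C(V,-)/{\sim}$ for cofibrant $V$), and transport across the adjunction $\phi$, which descends to homotopy classes because $-\otimes A \dashv [A,-]$ is already known to be a Quillen adjunction. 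Your bookkeeping of cofibrancy ($X\otimes A$ cofibrant when $X$ and $A$ are) and fibrancy ($[A,Y]$ fibrant when $A$ is cofibrant and $Y$ fibrant) is exactly what is needed for the detection criterion and for $\phi$ to be homotopy-invariant, so the argument goes through. What the paper's route buys is brevity and a clean identification of the conceptual mechanism (the mate correspondence); what yours buys is self-containedness, at the cost of invoking the saturation of weak equivalences and the identification $Ho(\C)(P,W)\cong\C(P,W)/{\sim}$ for cofibrant $P$ and fibrant $W$, which are themselves nontrivial inputs from Hovey's Chapter~1.
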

\begin{proof}
Certainly $- \otimes X$ is left Quillen just when $[X,-]$ is right Quillen. Let $f:A \to B$ be a weak equivalence between cofibrant objects.  Then the natural transformation of left Quillen functors $- \otimes f:- \otimes A \to - \otimes B$ and the natural transformation of right Quillen functors $[f,-]:[B,-] \to [A,-]$ are \emph{mates}.  By Corollary 1.4.4(b) of \cite{Hovey1999Model}  it follows that $Y \otimes f$ is a weak equivalence for all cofibrant $Y$ if and only if $[f,Y]$ is a weak equivalence for all fibrant $Y$.
\end{proof}
\begin{AxiomMC}
$(\C,\otimes,[-,-],I)$ satisfies either of the equivalent conditions of Proposition~\ref{prop:AxiomGood} and the unit $I$ is cofibrant.
\end{AxiomMC}
\begin{Proposition}\label{prop:axiomImplication}
Axiom MC implies Axioms M and C.
\end{Proposition}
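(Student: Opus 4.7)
The plan is to verify each of the three clauses of Axiom M and each of the three clauses of Axiom C directly from Axiom MC, making essential use of the fact that the tensor--hom adjunctions $-\otimes X \dashv [X,-]$ give, for cofibrant $X$, a Quillen adjunction.

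For Axiom M, I would use condition (1) of Proposition~\ref{prop:AxiomGood}. First, for any cofibrant $X$ the functor $-\otimes X$ is a left adjoint, hence preserves the initial object $0$; applying it to the cofibration $0\to A$ (for $A$ cofibrant) yields a cofibration $0\to A\otimes X$, so $A\otimes X$ is cofibrant. This shows $\otimes$ preserves cofibrant objects. Next, given a weak equivalence $(f,g)\ti (A,B)\to(A',B')$ in $\C_c\times\C_c$, I factor $f\otimes g = (f\otimes B')\circ(A\otimes g)$; the first factor is a weak equivalence because the left Quillen functor $-\otimes B'$ preserves weak equivalences between cofibrant objects by Ken Brown's lemma (Lemma~1.1.12 of \cite{Hovey1999Model}), and the second factor is a weak equivalence by the second half of condition~(1). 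Cofibrancy of $I$ is part of Axiom MC.

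For Axiom C, I would use condition (2) of Proposition~\ref{prop:AxiomGood}. Being right Quillen, $[X,-]$ preserves trivial fibrations on the nose, giving the second half of the first clause. For fibrant preservation, I observe that $[X,-]$ is a right adjoint and so preserves the terminal object $1$; applied to the fibration $Y\to 1$ (for $Y$ fibrant), this yields a fibration $[X,Y]\to [X,1]=1$, so $[X,Y]$ is fibrant. The remaining clauses of Axiom C --- preservation of weak equivalences between cofibrant objects by $[-,Y]$ for $Y$ fibrant, and cofibrancy of $I$ --- are respectively the second half of condition~(2) and part of Axiom MC.

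There is no real obstacle here: the argument is essentially a packaging of two standard facts about Quillen adjunctions, namely that left (resp.\ right) adjoints preserve initial (resp.\ terminal) objects, so that left (resp.\ right) Quillen functors preserve cofibrant (resp.\ fibrant) objects, together with Ken Brown's lemma. The only point worth flagging is that one must be careful to factor $f\otimes g$ into two single-variable maps in order to apply the two different halves of condition~(1) separately.
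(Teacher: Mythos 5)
Your proof is correct and follows essentially the same route as the paper's (much terser) argument: Ken Brown's lemma plus the standard facts that left/right Quillen functors preserve cofibrant/fibrant objects, applied to the two halves of Proposition~\ref{prop:AxiomGood}. You merely spell out the two-variable factorisation $f\otimes g=(f\otimes B')\circ(A\otimes g)$ and the initial/terminal object arguments that the paper leaves implicit.
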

\begin{proof}
When Axiom MC holds Ken Brown's lemma (1.1.12 of \cite{Hovey1999Model}) ensures that for cofibrant $X$ the functor $X \otimes -$ preserves weak equivalences between cofibrant objects.  So Axiom MC is a strengthening of Axiom M.  That it implies Axiom C is clear: if the right adjoint $[X,-]$ preserves fibrations it preserves fibrant objects.
\end{proof}
For $(\C,\otimes,[-,-],I,\phi)$ satisfying Axiom MC it follows that we can form the left derived skew monoidal and right derived skew closed structures on $Ho(\C)$.  The following result establishes, as expected, that these form part of a monoidal skew closed structure on $Ho(\C)$. 
\begin{Theorem}\label{Theorem:TotalDerived}
Let $(\C,\otimes,[-,-],I,\phi)$ be a monoidal skew closed category satisfying Axiom MC.
Then the left derived skew monoidal structure $(Ho(\C),\otimes_{l},I)$ and the right derived skew closed structure $(\C,[-,-]_{r},I)$ together with the isomorphisms $$\phi^{d}:Ho(\C)(QA \otimes QB,C) \cong Ho(\C)(A,[QB,RC])$$
form a monoidal skew closed structure on $Ho(\C)$.
\end{Theorem}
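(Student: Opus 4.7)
The plan is to use the fact that the derived skew monoidal structure $(Ho(\C), \otimes_l, I)$ (constructed in Section~\ref{section:hmonoidal}) and the derived skew closed structure $(Ho(\C), [-,-]_r, I)$ (given by Theorem~\ref{Theorem:hclosed1}) are already in hand, so that only two further things need to be done: exhibit $\phi^d$ as a natural isomorphism in all three variables, and then verify the three compatibility equations~\eqref{eq:leftunit},~\eqref{eq:rightunit} and~\eqref{eq:ass} relating the two derived sides through $\phi^d$.

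For the construction and naturality of $\phi^d$, I would exploit that Axiom MC makes $- \otimes B \dashv [B,-]$ a Quillen adjunction whenever $B$ is cofibrant. Taking the derived adjunction and specialising to $B = QB'$ yields the map $\phi^d$, which is bijective and natural in $A$ and $C$ by the general theory of derived Quillen adjunctions recalled at the start of the section. Naturality in the middle variable amounts to the assertion that for $f : B \to B'$ the derived versions of $-\otimes f$ and $[f,-]$ remain mates; this follows from Corollary~1.4.4(a) of~\cite{Hovey1999Model} applied to the Quillen adjunctions $- \otimes B \dashv [B,-]$ and $- \otimes B' \dashv [B',-]$.

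With $\phi^d$ established, the strategy for the three equations is a direct unpacking. Each derived constraint ($l_l, r_l, \alpha_l$ from~\eqref{eq:LD1}--\eqref{eq:LD3} and $i_r, j_r, L_r$ from~\eqref{eq:RD1}--\eqref{eq:RD3}) as well as $\phi^d$ itself is built from the original structure maps pre- or post-composed with the weak equivalences $p, q, e$, their formal inverses in $Ho(\C)$, and the lifting $k$ of~\eqref{eq:k0}. Substituting these expressions into each compatibility equation and migrating the weak equivalences via naturality reduces each derived equation to its original version in $\C$ (which holds by hypothesis), together with coherence instances supplied by Lemma~\ref{Theorem:LemmaK}.

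The main obstacle, I expect, will be the associator compatibility~\eqref{eq:ass}. This equation involves the derived analogue of the morphism $t:[A \otimes B, C] \to [A, [B, C]]$ defined by~\eqref{eq:t} and hence invokes both $L_r$ and the derived unit of the tensor--hom adjunction. Relating the derived $t$ to the original $t$ is where the coherence bookkeeping is densest: it requires assembling diagram~\eqref{eq:k1} (compatibility of $L$ with $k$) together with diagram~\eqref{eq:k4} (naturality of $k$) and carefully tracking the placement of the replacement morphisms. The technique needed is essentially the one used for the long (C1) diagram in the proof of Theorem~\ref{Theorem:hclosed1}, so I would adapt that method, adding the further bookkeeping that the tensor side now demands.
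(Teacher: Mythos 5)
Your proposal is correct and follows essentially the same route as the paper's appendix proof: $\phi^d$ is the derived adjunction isomorphism (natural because each of its components $q$, $\phi$, $p^{-1}$ is natural in $\C$), and each of the three compatibility equations is verified by unpacking the derived constraints, migrating the replacement maps $p,q,e,k$ by naturality, and reducing to the corresponding equation in $\C$ together with the coherence facts of Lemma~\ref{Theorem:LemmaK}. You also correctly identify the crux, namely computing the derived $t^d$ in terms of $t$ (the paper's diagram~\eqref{eq:CalculateT}, which uses $t=[u,1]\circ L$, the equation $[1,p]\circ k=p$ from~\eqref{eq:k0} and naturality of $k$ as in~\eqref{eq:k4}), after which the associator equation reduces to a single application of~\eqref{eq:ass} in $\C$.
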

The straightforward but long proof is deferred until the appendix.  The following result is the homotopical version of Eilenberg and Kelly's theorem.  Note that by Proposition~\ref{prop:hclosed2} conditions (1) and (2) amount to $(\C,[-,-],I)$ being homotopy closed.

\begin{Theorem}\label{Theorem:hEK}
Let $(\C,\otimes,[-,-],I)$ be a monoidal skew closed category satisfying Axiom MC.  Then $(\C,\otimes,I)$ is homotopy monoidal if and only if the following three conditions are satisfied.
\begin{enumerate}
\item For all cofibrant $A$ and fibrant $B$ the function $v: \C(A,B) \to \C(I,[A,B])$ is a bijection on homotopy classes of maps,
\item For all fibrant $A$ the map $i:[I,A] \to A$ is a weak equivalence,
\item The transformation $t:[A \otimes B,C] \to [A,[B,C]]$ 
is a weak equivalence whenever $A$ and $B$ are cofibrant and $C$ is fibrant.  
\end{enumerate}
\end{Theorem}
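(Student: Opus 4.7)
The plan is to derive the result from the classical Eilenberg--Kelly Theorem~\ref{Theorem:EK}, applied not to $\C$ itself but to the derived monoidal skew closed structure on $Ho(\C)$ produced by Theorem~\ref{Theorem:TotalDerived}, together with the characterisation of homotopy closedness given in Proposition~\ref{prop:hclosed2}. By Theorem~\ref{Theorem:TotalDerived}, Axiom MC makes $(Ho(\C),\otimes_{l},[-,-]_{r},I)$ a monoidal skew closed category. Theorem~\ref{Theorem:EK} then tells us that $(Ho(\C),\otimes_{l},I)$ is genuinely monoidal --- equivalently, $(\C,\otimes,I)$ is homotopy monoidal --- if and only if $(Ho(\C),[-,-]_{r},I)$ is closed and the associated transformation $t_{r}:[A\otimes_{l} B,C]_{r}\to [A,[B,C]_{r}]_{r}$ is invertible for every $A,B,C\in Ho(\C)$. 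By Proposition~\ref{prop:hclosed2}, closedness of $(Ho(\C),[-,-]_{r},I)$ is precisely conditions (1) and (2), so the task reduces to showing that invertibility of $t_{r}$ is equivalent to condition (3).

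To this end I identify $t_{r}$ with the original $t$ at cofibrant--fibrant arguments. The transformation $t:[A\otimes B,C]\to[A,[B,C]]$ is natural between two bifunctors $\C^{op}\times\C^{op}\times\C \to \C$, and by Axiom MC (via Proposition~\ref{prop:AxiomGood} and Ken Brown's lemma) both bifunctors preserve weak equivalences between cofibrant objects in the first two variables and between fibrant objects in the third. Restricting to $\C_{c}\times\C_{c}\times\C_{f}$ and transporting along the adjoint equivalences $Ho(Q)$ and $Ho(R)$ therefore yields a derived natural transformation $\tilde t$ on $Ho(\C)$ between the two derived bifunctors. Both $\tilde t$ and $t_{r}$ satisfy the characterising diagram~\eqref{eq:ass} in $Ho(\C)$ for the adjointness isomorphism $\phi^{d}$ of Theorem~\ref{Theorem:TotalDerived}, which pins them down uniquely, so $\tilde t=t_{r}$. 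Unwinding the construction, the component $t_{r,A,B,C}$ agrees in $Ho(\C)$ with $t_{QA,QB,RC}:[QA\otimes QB,RC]\to [QA,[QB,RC]]$ up to the evident weak equivalences $p:Q(QA\otimes QB)\to QA\otimes QB$ and $q:[QB,RC]\to R[QB,RC]$. Consequently $t_{r}$ is invertible at every triple iff $t_{QA,QB,RC}$ is a weak equivalence for all $A,B,C\in\C$; and naturality of $t$ combined with the preservation properties above shows that this in turn is equivalent to $t_{A,B,C}$ being a weak equivalence for every cofibrant $A,B$ and fibrant $C$ --- that is, to condition (3).

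The principal obstacle is this middle identification: matching the two candidates for $t_{r}$ on $Ho(\C)$, one coming from the derived structure via~\eqref{eq:t}, the other from directly deriving $t$. Both satisfy the adjoint characterisation~\eqref{eq:ass} for $\phi^{d}$, which determines them uniquely in $Ho(\C)$, so they must agree; the remaining work is routine but slightly bulky bookkeeping with $Q$, $R$, $p$, $q$ and the unit/associativity coherences recorded in Theorem~\ref{Theorem:TotalDerived}. Once this identification is in place, the equivalence with (3) is immediate, and combining with Proposition~\ref{prop:hclosed2} completes the proof.
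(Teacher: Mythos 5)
Your proposal is correct and follows essentially the same route as the paper: combine Theorem~\ref{Theorem:TotalDerived} with Theorem~\ref{Theorem:EK}, use Proposition~\ref{prop:hclosed2} to translate closedness of $(Ho(\C),[-,-]_{r},I)$ into conditions (1) and (2), and identify the derived transformation with $t_{QA,QB,RC}$ up to the weak equivalences $p$ and $q$ so that its invertibility in $Ho(\C)$ becomes condition (3). The only cosmetic difference is that the paper pins down $t^{d}$ as the composite $[1,q]\circ t\circ [p,1]^{-1}$ by direct computation (diagram~\eqref{eq:CalculateT} in the appendix, already part of the proof of Theorem~\ref{Theorem:TotalDerived}) rather than by your uniqueness argument via~\eqref{eq:ass}; both justifications are sound and lead to the same conclusion.
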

\begin{proof}
Combining Theorems'~\ref{Theorem:TotalDerived} and~\ref{Theorem:EK} we have that $(Ho(\C),\otimes_{l},I)$ is monoidal just when $(Ho(\C),[-,-]_{r},I)$ is closed and the induced transformation $t^{d}:[QA \otimes QB,RC] \to [QA,R[QB,RC]]$ is an isomorphism in $Ho(\C)$.  By Proposition~\ref{prop:hclosed2} closedness amounts to (1) and (2) above.  From the proof of Theorem~\ref{Theorem:TotalDerived} the transformation $t^{d}$ is given by the composite
\begin{equation*}
\def\objectstyle{\scriptstyle}
\def\labelstyle{\scriptstyle}
\cd{
[Q(QA \otimes QB),RC] \ar[r]^<<<<<{[p,1]^{-1}} & [QA \otimes QB,RC] \ar[r]^{t} & [QA,[QB,RC]] \ar[r]^{[1,q]} & [QA,R[QB,RC]]}
\end{equation*}
and therefore is invertible just when the central component 
\begin{equation*}
t_{QA,QB,RC}:[QA \otimes QB,RC] \to [QA,[QB,RC]]
\end{equation*}
is so for all $A,B$ and $C$.  For $A,B$ cofibrant and $C$ fibrant $t_{QA,QB,RC}$ is isomorphic to $t_{A,B,C}:[A \otimes B,C] \to [A,[B,C]]$.  Since $QA$ and $QB$ are cofibrant and $RC$ is fibrant it follows that $t_{QA,QB,RC}$ is invertible for all $A,B,C$ just when $t_{A,B,C}$ is invertible for all cofibrant $A$,$B$ and fibrant $C$.
\end{proof}
Again we have a symmetric variant.
\begin{Theorem}\label{Theorem:hEKs}
Let $(\C,\otimes,[-,-],I)$ be a monoidal skew closed category satisfying Axiom MC.
\begin{enumerate}
\item If $(\C,[-,-],I)$ is homotopy closed and admits a natural symmetry isomorphism $s:[A,[B,C]] \cong [B,[A,C]]$ satisfying $S3$ then $(\C,\otimes,I)$ is homotopy monoidal.  
\item If, in addition to (1), $(\C, [-,-],I,s)$ is symmetric skew closed then $(\C,\otimes,I)$ is homotopy symmetric monoidal.
\end{enumerate}
\end{Theorem}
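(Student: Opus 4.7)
The strategy is to work at the level of the homotopy category and reduce the two parts to the classical (non-derived) symmetric Eilenberg--Kelly theorem, namely Theorem~\ref{Theorem:ClassicalSymmetry}. Since Axiom MC holds, Proposition~\ref{prop:axiomImplication} ensures that both Axioms M and C hold, so by Theorem~\ref{Theorem:TotalDerived} we have a monoidal skew closed structure $(Ho(\C),\otimes_l,[-,-]_r,I)$ on the homotopy category. Our job is to transfer the symmetry hypothesis into this derived setting and then invoke Theorem~\ref{Theorem:ClassicalSymmetry} there.

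For part (1), the plan is as follows. By assumption $s$ satisfies S3, so Proposition~\ref{prop:derivedSymmetry} yields a natural isomorphism $s_r:[A,[B,C]_r]_r \cong [B,[A,C]_r]_r$ on $Ho(\C)$ that satisfies S3. The assumption that $(\C,[-,-],I)$ is homotopy closed says, by Proposition~\ref{prop:hclosed2}, that the derived skew closed structure $(Ho(\C),[-,-]_r,I)$ is genuinely closed, that is both left and right normal. Thus the hypotheses of Theorem~\ref{Theorem:ClassicalSymmetry}(1) are met at the level of $Ho(\C)$: we have a monoidal skew closed category there, its closed part is genuinely closed, and it admits a natural isomorphism $s_r$ satisfying S3. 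Applying that theorem yields that $(Ho(\C),\otimes_l,I)$ is monoidal, which is by definition exactly the statement that $(\C,\otimes,I)$ is homotopy monoidal.

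Part (2) follows along the same lines with one extra ingredient. If $(\C,[-,-],I,s)$ is symmetric skew closed, then the full strength of Proposition~\ref{prop:derivedSymmetry} tells us that $(Ho(\C),[-,-]_r,I,s_r)$ is symmetric skew closed. Combined with homotopy closedness via Proposition~\ref{prop:hclosed2}, the derived structure is in fact symmetric closed. Theorem~\ref{Theorem:ClassicalSymmetry}(2) then produces a symmetry $c_r$ making $(Ho(\C),\otimes_l,I,c_r)$ symmetric monoidal, which is the definition of $(\C,\otimes,I)$ being homotopy symmetric monoidal.

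There is essentially no novel obstacle in the argument: all the hard homotopical work has been done in Theorem~\ref{Theorem:TotalDerived} (establishing the monoidal skew closed structure on $Ho(\C)$ with the correct adjointness), in Proposition~\ref{prop:derivedSymmetry} (transferring the symmetry axioms along the right derived functor), and in Proposition~\ref{prop:hclosed2} (identifying homotopy closedness with genuine closedness on the homotopy category). The only subtle point to double-check is that the particular axiom S3 is among those transferred by Proposition~\ref{prop:derivedSymmetry}, which is guaranteed by the phrasing ``if $s$ satisfies any of S1--S4 then so does $s_r$''. With those three results in hand the theorem reduces to a one-line appeal to the classical Theorem~\ref{Theorem:ClassicalSymmetry} inside $Ho(\C)$.
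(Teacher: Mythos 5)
Your proposal is correct and follows essentially the same route as the paper: the paper's own proof is a two-line appeal to Proposition~\ref{prop:derivedSymmetry} (to transfer $s$ to $s_r$ satisfying S3, respectively the full symmetric structure) followed by Theorem~\ref{Theorem:ClassicalSymmetry} applied to the derived monoidal skew closed structure on $Ho(\C)$. You merely make explicit the supporting roles of Theorem~\ref{Theorem:TotalDerived} and Proposition~\ref{prop:hclosed2}, which the paper leaves implicit.
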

\begin{proof}
By Proposition~\ref{prop:derivedSymmetry} if $s$ satisfies S3 then so does $s_{r}$ with respect to $(Ho(\C),[-,-]_{r},I)$.  From Theorem~\ref{Theorem:ClassicalSymmetry} it follows that $(Ho(\C),\otimes_{l},I)$ is monoidal.  The second part follows again by application of Proposition~\ref{prop:derivedSymmetry} and ~\ref{Theorem:ClassicalSymmetry}.
\end{proof}

\section{Pseudo-commutative 2-monads and monoidal bicategories}\label{section:pseudoCommutative}
In the category $CMon$ of commutative monoids the set $CMon(A,B)$ forms a commutative monoid $[A,B]$ with respect to the pointwise structure of $B$.  This is the internal hom of a symmetric monoidal closed structure on $CMon$ whose tensor product represents functions $A \times B \to C$ that are homomorphisms in each variable.  From the monad-theoretic viewpoint the enabling property is that the commutative monoid monad on $\Set$ is a \emph{commutative monad}.\\
Extending this to dimension 2, Hyland and Power \cite{Hyland2002Pseudo} introduced the notion of a \emph{pseudo-commutative 2-monad} $T$ on $\Cat$.  Examples include the 2-monads for categories with a class of limits, permutative categories, symmetric monoidal categories and so on.  For such $T$ they showed that the 2-category of strict algebras and pseudomorphisms admits the structure of a \emph{pseudo-closed 2-category} -- a slight weakening of the notion of a closed category with a 2-categorical element.  Theorem 2 of \emph{ibid.} described a bicategorical version of Eilenberg and Kelly's theorem, designed to produce a monoidal bicategory structure on $\TAlg$.  However they did not give the details of the proof, which involved lengthy calculations of a bicategorical nature, and expressed their dissatisfaction with the argument.\begin{footnote}{ From \cite{Hyland2002Pseudo}:``Naturally, we are unhappy with the proof we have just outlined. Since the data we start from is in no way symmetric we expect some messy difficulties: but the calculations we do not give are very tiresome, and it would be only too easy to
have made a slip.  Hence we would like a more conceptual proof."}\end{footnote}\\
In this section we take a slightly different route to the monoidal bicategory structure on $\TAlg$.  We begin by making minor modifications to Hyland and Power's construction to produce a skew closed structure on the 2-category $\TAlgs$ of algebras and \emph{strict morphisms.}  This is simply the restriction of the pseudo-closed 2-category structure on $\TAlg$.  We then obtain a monoidal skew closed structure on $\TAlgs$  and, using Theorem~\ref{Theorem:hEK}, establish that it is homotopy monoidal. The monoidal bicategory structure on $\TAlg$ is obtained by transport of structure from the full sub 2-category of $\TAlgs$ containing the cofibrant objects.  

\subsection{Background on commutative monads}
If $ V$ is a symmetric monoidal closed category and $T$ an endofunctor of $V$ then enrichments of $T$ to a $V$-functor correspond to giving a \emph{strength}: that is, a natural transformation $t_{A,B}:A \otimes TB \to T(A \otimes B)$ subject to associativity and identity conditions.  One obtains a \emph{costrength} $t^{*}_{A,B}:TA \otimes B \to T(A \otimes B)$ related to the strength by means of the symmetry isomorphism $c_{A,B}:A \otimes B \to B \otimes A$.\\
If $(T,\eta,\mu)$ is a $V$-enriched monad then we can consider the following diagram
\begin{equation}\label{eq:commutative}
\xy
(0,0)*+{TA \otimes TB}="00";(30,0)*+{T(TA \otimes B)}="10";(60,0)*+{T^{2}(A \otimes B)}="20";
(0,-20)*+{T(A \otimes TB)}="01";(30,-20)*+{T^{2}(A \otimes B)}="11";(60,-20)*+{T(A \otimes B)}="21";
{\ar^{t} "00";"10"};
{\ar^{Tt^{\star}} "10";"20"};
{\ar^{\mu} "20";"21"};
{\ar_{t^{\star}} "00";"01"};
{\ar^{Tt} "01";"11"};
{\ar^{\mu} "11";"21"};
\endxy
\end{equation}
and if this commutes for all $A$ and $B$ then $T$ is said to be a \emph{commutative monad} \cite{Kock1970Monads}.\\
Now if $T$ is commutative and $V$ sufficiently complete and cocomplete then the category of algebras $V^{T}$ is itself symmetric monoidal closed \cite{Kock1971Closed, Keigher1978Symmetric}.  Both tensor product and internal hom represent \emph{$T$-bilinear maps} -- this perspective was explored in \cite{Kock1971Bilinearity} and more recently in \cite{Seal2013Tensors}.  More generally, a $T$-multilinear map consists of a morphism $f:A_{1}\otimes \ldots \otimes A_{n} \to B$ which is a \emph{$T$-algebra map in each variable.}  This means that the diagram
$$
\xy
(0,0)*+{A_{1} \otimes \ldots \otimes TA_{i}\otimes  \ldots \otimes  A_{n}}="00";(50,0)*+{T(A_{1} \otimes \ldots \otimes A_{n})}="10";(90,0)*+{TB}="20";
(0,-20)*+{A_{1} \otimes \ldots \otimes A_{i}\otimes  \ldots \otimes  A_{n}}="01";(90,-20)*+{B}="21";
{\ar^<<<<<<{t} "00";"10"};
{\ar^<<<<<<{Tf} "10";"20"};
{\ar^{b} "20";"21"};
{\ar_{1 \otimes \ldots \otimes a_{i} \otimes \ldots \otimes 1} "00";"01"};
{\ar^{f} "01";"21"};
\endxy$$
is commutative for each $i$ where the top row $t:A_{1} \otimes \ldots \otimes TA_{i}\otimes  \ldots \otimes  A_{n} \to T(A_{1} \otimes \ldots \otimes A_{n})$ is the unique map constructible from the strengths and costrengths.  $T$-multilinear maps form the morphisms of a \emph{multicategory} of $T$-algebras.  Surprisingly, the multicategory perspective appears to have first been explored in the more general 2-categorical setting of \cite{Hyland2002Pseudo}.

\subsection{Background on 2-monads}\label{section:2-monads}
The category of small categories $\Cat$ is cartesian closed and hence provides a basis suitable for enriched category theory.   In particular one has the notions of $\Cat$-enriched category -- hence 2-category -- and of $\Cat$-enriched monads -- hence $2$-monads.  The appendage ``2-" will always refer to strict $\Cat$-enriched concepts.\\
Given a 2-monad $T=(T,\eta, \mu)$ on a 2-category $\C$ one has the Eilenberg-Moore 2-category $\TAlgs$ of algebras.  In $\TAlgs$ everything is completely strict. There are the usual \emph{strict} algebras $\f A=(A,a)$ satisfying $a \circ Ta = a \circ \mu_{A}$ and $a \circ \eta_{A}=1$.   The \emph{strict} morphisms $f:\f A \to \f B$ of $\TAlgs$ satisfy the usual equation $b \circ Tf = f \circ a$ on the nose, whilst the 2-cells $\alpha:f \Rightarrow g \in \TAlgs(\f A,\f B)$ satisfy $b \circ T\alpha = \alpha \circ a$.\\
$\TAlgs$ is just as well behaved as its $\Set$-enriched counterpart. Important facts for us are the following ones.
\begin{itemize}
\item The usual (free, forgetful)-adjunction lifts to a 2-adjunction $F \dashv U$ where $U:\TAlgs \to \C$ is the evident forgetful 2-functor. 
\item Suppose that $\C$ is a locally presentable 2-category: one, like $\Cat$, that is cocomplete in the sense of enriched category theory \cite{Kelly1982Basic} and whose underlying category is locally presentable \cite{Adamek1994Locally}. If $T$ is accessible --  preserves $\lambda$-filtered colimits for some regular cardinal $\lambda$ -- then $\TAlgs$ is also locally presentable.  
\end{itemize}
There are accessible 2-monads $T$ on $\Cat$ whose strict algebras are categories with $\f D$-limits, permutative categories, symmetric monoidal categories and so on.  In particular the examples of skew closed 2-categories from Section ~\ref{section:structuredCats} reside on 2-categories of the form $\TAlgs$ for $T$ an accessible 2-monad on $\Cat$.\\
So far we have discussed strict aspects of two-dimensional monad theory. Though there are several possibilities, the only weak structures of interest here are \emph{pseudomorphisms} of \emph{strict} $T$-algebras.  A pseudomorphism $\f f:\f A \rightsquigarrow \f B$ consists of a morphism $f:A \to B$ and invertible 2-cell $\overline{f}:b \circ Tf \cong f \circ a$ satisfying two coherence conditions \cite{Blackwell1989Two-dimensional}.  These are the morphisms of the 2-category $\TAlg$ into which $\TAlgs$ includes via an identity on objects 2-functor $\iota:\TAlgs \to \TAlg$.  The inclusion commutes with the forgetful 2-functors
\begin{equation}
\cd{\TAlgs \ar[dr]_{U} \ar[rr]^{\iota} & & \TAlg \ar[dl]^{U} \\
& \C}
\end{equation}
to the base.  Pseudomorphisms of $T$-algebras capture functors preserving categorical structure up to isomorphism.  For example, in the case that $T$ is the 2-monad for categories with $\f D$-limits or permutative categories we obtain the 2-categories $\DLim$ and $\Perm$ as $\TAlg$.\\
An important tool in the study of pseudomorphisms are \emph{pseudomorphism classifiers}.  If $T$ is a reasonable 2-monad -- for instance, an accessible 2-monad on $\Cat$ -- then by Theorem 3.3 of \cite{Blackwell1989Two-dimensional} the inclusion $\iota:\TAlgs \to \TAlg$ has a left 2-adjoint $Q$.  We call $Q\f A$ the pseudomorphism classifier of $\f A$ since each pseudomorphism $\f f:\f A \rightsquigarrow \f B$ factors uniquely through the unit $\f q_{\f A}:\f A \rightsquigarrow Q\f A$ as a strict morphism $Q\f A \to \f B$.  The counit $p_{\f A}:Q\f A \to \f A$ is a strict map with homotopy theoretic content -- see Section~\ref{section:h2monads} below.

\subsection{From pseudo-commutative 2-monads to monoidal skew closed 2-categories}
Given a 2-monad $T$ on $\Cat$ we have, in particular, the corresponding strengths $t:T(A \times B) \to A \times TB$ and costrengths $T(A \times B) \to TA \times B$ and can enquire as to whether $T$ is commutative.  For those structures -- such as categories with finite products or symmetric monoidal categories  -- that involve an aspect of weakness in their definitions the relevant diagram ~\eqref{eq:commutative} rarely commutes on the nose, but often commutes up to natural isomorphism.  This leads to the notion of a \emph{pseudo-commutative 2-monad} $T$ which is a 2-monad $T$ equipped with invertible 2-cells
$$
\xy
(0,0)*+{TA \times TB}="00";(30,0)*+{T(TA \times B)}="10";(60,0)*+{T^{2}(A \times B)}="20";
(0,-20)*+{T(A \times TB)}="01";(30,-20)*+{T^{2}(A \times B)}="11";(60,-20)*+{T(A \times B)}="21";
{\ar^{t} "00";"10"};
{\ar^{Tt^{\star}} "10";"20"};
{\ar^{\mu} "20";"21"};
{\ar_{t^{\star}} "00";"01"};
{\ar_{Tt} "01";"11"};
{\ar_{\mu} "11";"21"};
{\ar@{=>}^{\alpha_{A,B}}(30,-5)*+{};(30,-15)*+{}};
\endxy$$
subject to axioms (see Definition 5 of \cite{Hyland2002Pseudo}) asserting the equality of composite 2-cells built from the above ones.  If $\alpha$ commutes with the symmetry isomorphism -- in the sense that $\alpha_{B,A}=Tc_{A,B} \circ \alpha_{A,B} \circ c_{TB,TA}$ -- then $T$ is said to be a \emph{symmetric} pseudo-commutative 2-monad.\\
The 2-monad for categories with $\f D$-limits is symmetric pseudo-commutative \cite{Lopez-Franco2011Pseudo} as are the 2-monads for permutative and symmetric monoidal categories \cite{Hyland2002Pseudo}.  An example of a pseudo-commutative 2-monad which is not symmetric is the 2-monad for braided strict monoidal categories \cite{Corner2013Operads}.
\subsubsection{The 2-multicategory of algebras}
For $T$ pseudo-commutative one can define $T$-multilinear maps.  A $T$-multilinear map $\f f:(\f A_{1}, \ldots ,\f A_{n}) \to \f B$ consists of a functor $f:A_{1} \times \ldots A_{n} \to B$ together with a family of invertible 2-cells $f_{i}$:
\begin{equation*}
\xy
(0,0)*+{A_{1} \times \ldots \times TA_{i}\times  \ldots \times  A_{n}}="00";(50,0)*+{T(A_{1} \times \ldots \times A_{n})}="10";(90,0)*+{TB}="20";
(0,-20)*+{A_{1} \times \ldots \times A_{i}\times  \ldots \times  A_{n}}="01";(90,-20)*+{B}="21";
{\ar^{t} "00";"10"};
{\ar^<<<<<<{Tf} "10";"20"};
{\ar^{c} "20";"21"};
{\ar_{1 \times \ldots \times a_{i} \times \ldots \times 1} "00";"01"};
{\ar^{f} "01";"21"};
{\ar@{=>}^{f_{i}}(45,-5)*+{};(45,-15)*+{}};
\endxy
\end{equation*}
satisfying indexed versions of the pseudomorphism equations, and a compatibility condition involving the pseudo-commutativity.  A nullary map $(-) \to \f B$ is defined to be an object of the category $B$.\\
There are transformations of multilinear maps and these are the morphisms of a category $\TALG(\f A_{1},\f A_{2} \ldots \f A_{n};\f B)$.  Proposition 18 of \emph{ibid.} shows that these are the hom-categories of a 2-multicategory of $T$-algebras $\TALG$ and that, moreover, if $T$ is symmetric pseudo-commutative then $\TALG$ is a symmetric 2-multicategory. $\TAlg$ is itself recovered as the 2-category of unary maps.\\
Of course we can speak of multimaps $(\f A_{1}, \ldots ,\f A_{n}) \to \f B$ which are strict in $\f A_{i}$: those for which the natural transformation $f_{i}$ depicted above is an identity.  Note that this agrees with the formulation given in Definition~\ref{Thm:strict}.
\begin{Theorem}[Hyland-Power \cite{Hyland2002Pseudo}]\label{Thm:HpMulti}
The 2-multicategory $\TALG$ is closed.  Moreover a multimap $(\f A_{1},\f A_{2} \ldots \f A_{n},\f B) \to \f C$ is strict in $\f A_{i}$ just when the corresponding map $(\f A_{1},\f A_{2} \ldots \f A_{n}) \to [\f B,\f C]$ is so.
\end{Theorem}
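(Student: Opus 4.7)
The plan is to construct the internal hom $[\f B,\f C]$ and the evaluation multimap explicitly, then verify the universal property by currying, with the strictness claim falling out immediately from the construction.

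First, I would take the underlying category of $[\f B,\f C]$ to be $\TAlg(\f B,\f C)$, the category of pseudomorphisms $\f B \rightsquigarrow \f C$ and algebra $2$-cells.  The $T$-algebra structure $\xi \colon T\TAlg(\f B,\f C) \to \TAlg(\f B,\f C)$ is defined pointwise: for $x \in T\TAlg(\f B,\f C)$, the pseudomorphism $\xi(x)\colon \f B \rightsquigarrow \f C$ has underlying functor obtained by postcomposing with $c \colon TC \to C$, and its pseudomorphism constraint is assembled from the component constraints $\overline{\f f_i}$ by using the pseudo-commutativity $2$-cell $\alpha$ to interchange the two copies of $T$ that arise when evaluating at $Tb$.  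I would then verify the strict algebra laws $\xi \circ \eta = 1$ and $\xi \circ T\xi = \xi \circ \mu$, which reduce to the strict algebra axioms of $\f C$ together with the coherence axioms for $\alpha$, and confirm that $\xi$ is $2$-functorial.

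Second, I would define the evaluation multimap $e \colon ([\f B,\f C],\f B) \to \f C$ with underlying functor the ordinary evaluation $\TAlg(\f B,\f C) \times B \to C$.  Since $\xi$ is pointwise, the constraint $e_1$ in the first variable is the identity, so $e$ is by construction strict in its first variable; the constraint $e_2$ in the second variable at a pseudomorphism $\f f$ is its own structure $2$-cell $\overline{\f f}$.  The cubical identity between $e_1$ and $e_2$ then collapses to the pseudomorphism coherence of the individual $\f f$.

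Third, for the universal property, given $\f F \colon (\f A_1,\ldots,\f A_n,\f B) \to \f C$ I would define its transpose $\widehat{\f F}\colon(\f A_1,\ldots,\f A_n) \to [\f B,\f C]$ by $\widehat{\f F}(a_1,\ldots,a_n) := \f F(a_1,\ldots,a_n,-)$, viewed as a pseudomorphism via the $(n{+}1)$-st constraint $F_{n+1}$.  The constraint $\widehat{F}_i$ is induced directly from $F_i$; showing that its components are themselves $T$-algebra $2$-cells in $[\f B,\f C]$ uses exactly the cubical identity coupling $F_i$ and $F_{n+1}$ in the definition of multilinear map.  The inverse assignment $\f G \mapsto e \circ (\f G, 1)$ is essentially tautological, and both extend functorially to transformations, yielding the required isomorphism of hom-categories $\TALG(\f A_1,\ldots,\f A_n;[\f B,\f C]) \cong \TALG(\f A_1,\ldots,\f A_n,\f B;\f C)$.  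The strictness claim is then immediate by inspection of the recipe for $\widehat{\f F}$: the constraint $\widehat{F}_i$ is an identity precisely when $F_i$ is.

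The main obstacle is the cascade of coherence verifications needed to confirm that $\xi$ really is a strict $T$-algebra structure and that $e$ genuinely defines a multimap satisfying all the required cubical identities.  These are the calculations Hyland and Power describe as tiresome, and each rests essentially on the pseudo-commutativity $2$-cell $\alpha$ and its axioms, which control all the interchanges of $T$'s that appear when distributing the pointwise algebra structure across pseudomorphisms.  Once those verifications are in place the universal property and strictness assertion are both formal.
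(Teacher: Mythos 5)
Your outline is essentially sound, but note that the paper does not actually prove this theorem: it is imported wholesale from \cite{Hyland2002Pseudo} (their Theorem 11 and Proposition 18), and the remarks following Theorem~\ref{Theorem:SkewAlgebras} merely record which parts of that paper supply which components. Measured against Hyland and Power's own argument, your reconstruction is faithful in its main lines --- the underlying category of $[\f B,\f C]$ is indeed $\TAlg(\f B,\f C)$, evaluation is strict in the hom variable, and the currying recipe together with the observation that $\widehat{F}_{i}$ is an identity precisely when $F_{i}$ is gives the strictness clause --- but it differs in one structural choice. You build the algebra structure $\xi$ on $\TAlg(\f B,\f C)$ by hand, pointwise, and must then verify strict associativity, unitality and $2$-functoriality directly, including at the level of the constraints $\overline{\xi(x)}$, which is where the coherence axioms for $\alpha$ do their work. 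Hyland and Power instead exhibit $[\f B,\f C]$ as a $2$-categorical limit in $\TAlgs$ --- an iso-inserter followed by two equifiers, i.e.\ a descent object built from cotensor algebras --- created by the forgetful $2$-functor; the strict algebra laws and the $2$-functoriality of $[-,-]$ then come for free, with the pseudo-commutativity entering only in assembling the diagram being limited over. Your direct construction is more concrete and makes the role of $\alpha$ visible at each step, but it concentrates in one place exactly the ``tiresome'' coherence verifications that the limit presentation is designed to absorb; as written, those verifications (that $\overline{\xi(x)}$ is well defined and coherent, and that the constraints produced by $\xi\circ T\xi$ and $\xi\circ\mu$ agree on the nose) constitute essentially all of the mathematical content and are asserted rather than carried out, so you should either supply them or fall back on the descent-object construction as the paper does.
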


\subsubsection{The skew closed structure}\label{section:PseudoSkew}
By definition $\TALG(-;\f A)= A$.  Since we have a natural isomorphism $\TAlgs(F1,\f A) \cong \Cat(1,A) \cong A$ and a suitable closed 2-multicategory $\TALG$ we can apply Theorem~\ref{Thm:multiToClosed2} to obtain a skew closed structure on $\TAlgs$.

\begin{Theorem}\label{Theorem:SkewAlgebras}
Let $T$ be a pseudo-commutative 2-monad on $\Cat$.  
\begin{enumerate}
\item
Then $(\TAlg,[-,-],L)$ is a semi-closed 2-category.  Moreover $[-,-]$ and $L$ restrict to $\TAlgs$ where they form part of a skew closed 2-category \newline $(\TAlgs,[-,-],F1,L,i,j)$.
\item If $T$ is symmetric then $(\TAlg,[-,-],L)$ is symmetric semi-closed and \newline $(\TAlgs,[-,-],F1,L,i,j)$ is symmetric skew closed.
\end{enumerate}
\end{Theorem}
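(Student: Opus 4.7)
The plan is to apply Theorem~\ref{Thm:multiToClosed2} to the symmetric 2-multicategory $\TALG$ constructed by Hyland and Power, with the role of $\C_{s}$ played by $\TAlgs$ and the role of $\C$ by $\TAlg$. The inclusion $\iota\colon\TAlgs\to\TAlg$ is locally full: a 2-cell $\alpha\colon f\To g$ between strict morphisms satisfies the pseudomorphism 2-cell condition trivially on either side, so nothing is lost at the level of 2-cells. Thus the hypothesis on $\C_{s}$ is met. The 2-multicategory $\TALG$ is closed by the first part of Theorem~\ref{Thm:HpMulti}, while the compatibility of strictness with transposition -- condition (1) of Theorem~\ref{Thm:multiToClosed2} -- is precisely the second assertion of Theorem~\ref{Thm:HpMulti}.

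It remains to exhibit a suitable nullary multimap. The forgetful 2-functor $U\colon\TAlgs\to\Cat$ admits a left 2-adjoint $F$ (see Section~\ref{section:2-monads}), and so we have a 2-natural isomorphism $\TAlgs(F1,\f A)\cong\Cat(1,UA)=A$. Since $\TALG(-;\f A)=A$ by definition, the unit component $\eta_{1}\colon 1\to UF1$ picks out an object $u$ of $F1$, which we regard as a nullary multimap $u\colon(-)\to F1$. Precomposition with $u$ is the isomorphism $\TAlgs(F1,\f A)\xrightarrow{\cong}\TALG(-;\f A)$ above: this verifies condition (2) of Theorem~\ref{Thm:multiToClosed2}. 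Invoking that theorem yields the semi-closed 2-category $(\TAlg,[-,-],L)$ and the skew closed 2-category $(\TAlgs,[-,-],F1,L,i,j)$, proving part~(1).

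For part~(2), if $T$ is a symmetric pseudo-commutative 2-monad then, by Proposition~18 of~\cite{Hyland2002Pseudo} already cited above, $\TALG$ carries the structure of a symmetric 2-multicategory. Applying the symmetric half of Theorem~\ref{Thm:multiToClosed2} then gives the symmetric semi-closed structure on $\TAlg$ and the symmetric skew closed 2-category structure on $\TAlgs$ with symmetry $s\colon[\f A,[\f B,\f C]]\cong[\f B,[\f A,\f C]]$ induced by the $S_{2}$-action on binary multimaps. No additional verification is required beyond what Theorem~\ref{Thm:multiToClosed2} provides.

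Because all the substantive 2-multicategorical work -- the construction of $\TALG$, its closedness, the strictness criterion, and the symmetry -- has been packaged into Theorem~\ref{Thm:HpMulti}, and the passage from a closed 2-multicategory with nullary classifier to a skew closed 2-category has been packaged into Theorem~\ref{Thm:multiToClosed2}, the only point that requires any genuine verification is the identification of the nullary map classifier with $F1$ together with the unit $\eta_{1}$. The main obstacle would be any subtle mismatch between the 2-dimensional universal property of $F1$ in $\TAlgs$ and the nullary representability condition in $\TALG$, but these coincide via the standard 2-adjunction $F\dashv U$, so no real difficulty arises.
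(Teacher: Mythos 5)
Your proposal is correct and follows exactly the paper's route: the paper's entire ``proof'' is the remark preceding the theorem, namely that $\TALG(-;\f A)=A\cong\Cat(1,A)\cong\TAlgs(F1,\f A)$ together with the closedness and strictness statements of Theorem~\ref{Thm:HpMulti} allow one to invoke Theorem~\ref{Thm:multiToClosed2}, with the symmetric case handled by Proposition~18 of \cite{Hyland2002Pseudo} in the same way. Your write-up simply spells out a few details (local fullness of $\iota$, the identification of the nullary classifier $u$ with $\eta_{1}$) that the paper leaves implicit.
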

The skew closed 2-category $(\TAlgs,[-,-],F1,L,i,j)$ has components as constructed in Section~\ref{section:multi}.  Let us record, for later use, some further information about these components.
\begin{enumerate}
\item The underlying category of $[\f A, \f B]$ is just $\TAlg(\f A, \f B)$.  More generally
$U \circ [-,-]=\TAlg(-,-):\TAlg^{op} \times \TAlg \to \Cat$.
\item The underlying functor of $L:[\f A,\f B] \to [[\f C,\f A],[\f C,\f B]]$ is given by $[\f C,-]_{\f A,\f B}:\TAlg(\f A,\f B) \to \TAlg([\f C,\f A],[\f C,\f B]).$
\item The underlying functor of $i:[F1,\f A] \to \f A$ is the composite
\begin{equation*}
\cd{\TAlg(\f {F1},\f A) \ar[rr]^{U_{\f {F1},\f A}} && \Cat(T1,A) \ar[rr]^{\Cat(\eta_{1},A)} && \Cat(1,A) \ar[rr]^{ev_{\bullet}} &&  A}
\end{equation*}
whose last component is the evaluation isomorphism.  
\item $j:F1 \to [\f A,\f A]$ is the transpose of the functor $\hat{1}:1 \to \TAlg(\f A,\f A)$ selecting the identity on $\f A$.
\end{enumerate}
(1) follows from the construction of the hom algebra $[\f A, \f B]$ in \cite{Hyland2002Pseudo} as a 2-categorical limit in $\TAlg$ created by $U:\TAlg \to \Cat$.\begin{footnote}{ This construction is accomplished in three stages by firstly forming an iso-inserter and then a pair of equifiers and amounts to the construction of a descent object.}\end{footnote}  Theorem 11 of \emph{ibid.} gives a full description of the isomorphisms $\TALG(\f A_{1}, \ldots, \f A_{n},\f B;\f C) \cong \TALG(\f A_{1} \ldots \f A_{n};[\f B,\f C])$.  From this, it follows that the evaluation multimap $ev:([\f A,\f B],\f A) \to \f B$ has underlying functor $\TAlg(\f A,\f B) \times \f A \to \f B$ acting by application, which is what is required for (3).  (2) follows from the analysis, given in Proposition 21 of \emph{ibid}, of how the same adjointness isomorphisms behave with respect to underlying maps.  (4) is by definition.

\subsubsection{The monoidal skew closed structure on $\TAlgs$}
We now describe left 2-adjoints to the 2-functors $[\f A,-]:\TAlgs \to \TAlgs$.  For this let us further suppose that $T$ is an accessible 2-monad.  We must show that each $\f B$ admits a reflection $\f B \otimes \f A$ along $[\f A,-]$.  Since $\TAlgs$ is cocomplete the class of algebras admitting such a reflection is closed under colimits; because each algebra is a coequaliser of frees it therefore suffices to show that each free algebra admits a reflection.  With this is mind observe that the triangle
\begin{equation}\label{eq:triangle}
\cd{\TAlgs \ar[dr]_{\TAlg(\f A,\iota-)} \ar[rr]^{[\f A,-]} && \TAlgs \ar[dl]^{U} \\
& \Cat}
\end{equation}
commutes.  Because $T$ is accessible we have the 2-adjunction $Q \dashv \iota$ and corresponding isomorphism $\TAlgs(Q\f A,-) \cong \TAlg(\f A,\iota-)$.  Now the representable $\TAlgs(Q\f A,-)$ has a left adjoint $-.Q\f A$ given by taking copowers.  It follows that at $C \in \Cat$ the reflection $FC \otimes \f A$ is given by $C.Q\f A$.   We conclude:
\begin{Proposition}
If $T$ is an accessible pseudo-commutative 2-monad on $\Cat$ then each $[\f A,-]:\TAlgs \to \TAlgs$ has a left 2-adjoint $- \otimes \f A$.  In particular $(\TAlgs, \otimes, [-,-], F1)$ is a monoidal skew closed 2-category.
\end{Proposition}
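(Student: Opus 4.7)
The proof follows the three-step plan sketched just before the statement: first reduce to free algebras, then produce an explicit reflection of each free algebra via the pseudomorphism classifier, and finally invoke Street's correspondence.

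For the reduction I would invoke the standard fact that, given any 2-functor $G\colon \D \to \C$, the full sub-2-category of objects of $\C$ admitting a reflection along $G$ is closed under colimits; the reflections then assemble into a partial left 2-adjoint defined there. Since $T$ is accessible and $\Cat$ is locally presentable, $\TAlgs$ is itself locally presentable (Section~\ref{section:2-monads}), hence cocomplete, and every strict $T$-algebra is a (reflexive) coequaliser of free algebras $FC$. It therefore suffices to produce a reflection of $FC$ along $[\f A,-]$ for each $C \in \Cat$.

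For the explicit construction I would chain the 2-natural isomorphisms
\begin{align*}
\TAlgs(FC,[\f A,\f B]) &\cong \Cat(C,U[\f A,\f B]) \cong \Cat(C,\TAlg(\f A,\f B)) \\
&\cong \Cat(C,\TAlgs(Q\f A,\f B)) \cong \TAlgs(C \cdot Q\f A,\f B),
\end{align*}
where the first isomorphism is the free/forgetful 2-adjunction $F \dashv U$, the second is the commuting triangle \eqref{eq:triangle} together with item (1) following Theorem~\ref{Theorem:SkewAlgebras}, the third is the pseudomorphism classifier 2-adjunction $Q \dashv \iota$ (available by accessibility), and the fourth is the $\Cat$-copower in $\TAlgs$. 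Setting $FC \otimes \f A := C \cdot Q\f A$ produces the reflection. Combining this with the first step then gives, for each $\f A$, a left 2-adjoint $- \otimes \f A \dashv [\f A,-]$; 2-functoriality of $\otimes$ in its second variable follows by taking mates of the 2-natural transformations $[\f f,-]\colon [\f A',-] \To [\f A,-]$ induced by morphisms $\f f\colon \f A \to \f A'$ in $\TAlgs$.

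Finally, to obtain the monoidal skew closed 2-category structure I would invoke Street's correspondence (Section~\ref{section:correspondence}) in its evident $\Cat$-enriched form, which upgrades the skew closed 2-category of Theorem~\ref{Theorem:SkewAlgebras} -- now equipped with the tensor-hom adjunctions just constructed -- to a monoidal skew closed structure $(\TAlgs,\otimes,[-,-],F1)$. No step presents a genuine obstacle; the entire argument is a conceptual assembly of the accessibility of $T$, the pseudomorphism classifier adjunction, and the skew closed/monoidal correspondence, with the one mildly nontrivial point being the appearance of $Q\f A$ in place of $\f A$ in the formula for the tensor, which is forced by the fact that $U[\f A,\f B] = \TAlg(\f A,\f B)$ lives over $\TAlg$ rather than $\TAlgs$.
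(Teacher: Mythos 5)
Your proposal is correct and follows essentially the same route as the paper: reduce to free algebras via closure of the reflectable objects under colimits, then compute the reflection of $FC$ as the copower $C\cdot Q\f A$ using the chain $\TAlgs(FC,[\f A,\f B])\cong\Cat(C,\TAlg(\f A,\f B))\cong\Cat(C,\TAlgs(Q\f A,\f B))\cong\TAlgs(C\cdot Q\f A,\f B)$, and finally invoke Street's correspondence. The only differences are presentational (you spell out the 2-functoriality of $\otimes$ via mates, which the paper leaves implicit).
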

\subsection{From $\TAlgs$ as a monoidal skew closed 2-category to $\TAlg$ as a monoidal bicategory}
Our next goal is to show that $(\TAlgs, \otimes, [-,-], F1)$ is homotopy monoidal.  In order to do so requires understanding the Quillen model structure on $\TAlgs$ and its relationship with pseudomorphisms.  We summarise the key points below and refer to the original source \cite{Lack2007Homotopy-theoretic} for further details.
\subsubsection{Homotopy theoretic aspects of 2-monads}\label{section:h2monads}
Thought of as a mere category, $\Cat$ admits a Quillen model structure in which the weak equivalences are the equivalences of categories.  The cofibrations are the injective on objects functors and the fibrations are the isofibrations: functors with the isomorphism lifting property.  It follows that all objects are cofibrant and fibrant.\\
Equipped with the cartesian closed structure, $\Cat$ is a monoidal model category \cite{Hovey1999Model}.  Therefore one can speak of model 2-categories, of which $\Cat$ is the leading example.  It was shown in Theorem 4.5 of \cite{Lack2007Homotopy-theoretic} that for an accessible 2-monad $T$ on $\Cat$ the model structure lifts along $U:\TAlgs \to \Cat$ to a model 2-category structure on $\TAlgs$:  a morphism of $\TAlgs$ is a weak equivalence or fibration just when its image under $U$ is one.  It follows immediately that the adjunction $F \dashv U:\TAlgs \leftrightarrows \Cat$ is a Quillen adjunction.\\
Since $F$ preserves cofibrations each free algebra is cofibrant.  In fact, the cofibrant objects are the \emph{flexible algebras} of \cite{Blackwell1989Two-dimensional} and were studied long before the connection with model categories was made in \cite{Lack2007Homotopy-theoretic}.  Another source of cofibrant algebras comes from pseudomorphism classifiers: each $Q\f A$ is cofibrant.  In fact the counit $p_{\f A}:Q\f A \to \f A$ of the adjunction $Q \dashv \iota:\TAlgs \leftrightarrows \TAlg$ is a trivial fibration in $\TAlgs$; thus $Q\f A$ is a cofibrant replacement of $\f A$.\\
Theorem 4.7 of  \cite{Blackwell1989Two-dimensional} ensures that if $\f A$ is flexible then, for all $\f B$, the fully faithful inclusion $$\iota_{\f A,\f B}:\TAlgs(\f A,\f B) \to \TAlg(\f A,\f B)$$ is essentially surjective on objects: that is, an \emph{equivalence of categories}.  This important fact can also be deduced from the model 2-category structure: the inclusion $\iota_{\f A,\f B}$ is isomorphic to $\TAlgs(p_{\f A},\f B):\TAlgs(\f A,\f B) \to \TAlgs(Q\f A, \f B)$ which is an equivalence since $p_{\f A}:Q\f A \to \f A$ is a weak equivalence of cofibrant objects.\\
Finally we note that a parallel pair of algebra morphisms $f,g:\f A \rightrightarrows \f B$ are right homotopic just when they are isomorphic in $\TAlgs(\f A,\f B)$.  This follows from the fact that for each algebra $\f B$ the power algebra $[I,\f B]$ is a path object where $I$ is the walking isomorphism.  In particular, if $A$ is cofibrant then $f$ and $g$ are homotopic just when they are isomorphic.
\subsubsection{Homotopical behaviour of the skew structure}
\begin{Theorem}
Let $T$ be an accessible pseudo-commutative 2-monad on $\Cat$.
\begin{enumerate}
\item
Then the monoidal skew closed 2-category $(\TAlgs, \otimes, [-,-], F1)$ satisfies Axiom $MC$ and $(\TAlgs, \otimes, F1)$ is homotopy monoidal.  
\item If $T$ is symmetric then $(\TAlgs, \otimes, F1)$ is homotopy symmetric monoidal.
\end{enumerate}
\end{Theorem}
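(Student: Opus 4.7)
The plan is to first verify Axiom MC for $(\TAlgs,\otimes,[-,-],F1)$ and then apply the homotopical Eilenberg-Kelly theorem (Theorem~\ref{Theorem:hEK}) to obtain part~(1); part~(2) follows by invoking the symmetry from Theorem~\ref{Theorem:SkewAlgebras}(2) together with Proposition~\ref{prop:derivedSymmetry} and Theorem~\ref{Theorem:ClassicalSymmetry}(2).

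\emph{Axiom MC.} The unit $F1$ is cofibrant because it is free. For cofibrant $\f A$ the 2-functor $[\f A,-]\colon\TAlgs\to\TAlgs$ is right Quillen: since $U$ creates fibrations and trivial fibrations, it suffices that $U[\f A,-]=\TAlg(\f A,-)$ preserve (trivial) isofibrations of categories. The 2-adjunction $Q\dashv\iota$ of Section~\ref{section:h2monads} supplies a natural isomorphism $\TAlg(\f A,-)\cong\TAlgs(Q\f A,-)$, and $Q\f A$ is always cofibrant; the required preservation then comes from the fact that $\TAlgs$ is a model $\Cat$-category, which implies that $\TAlgs(\f X,-)$ is right Quillen whenever $\f X$ is cofibrant. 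Proposition~\ref{prop:AxiomGood} now supplies the monoidal half of the axiom; in particular $\f A\otimes\f B$ is cofibrant whenever $\f A$ and $\f B$ are.

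\emph{Verifying the hypotheses of Theorem~\ref{Theorem:hEK}.} All objects of $\TAlgs$ are fibrant. Using the description of the skew closed structure recorded after Theorem~\ref{Theorem:SkewAlgebras} together with the isomorphism $\TAlgs(F1,\f X)\cong UX$, the map $v\colon\TAlgs(\f A,\f B)\to\TAlgs(F1,[\f A,\f B])$ identifies with the inclusion $\iota_{\f A,\f B}\colon\TAlgs(\f A,\f B)\to\TAlg(\f A,\f B)$. For flexible $\f A$ this is an equivalence of categories, and since path objects in $\TAlgs$ are of the form $[I,-]$ (with $I$ the free-walking isomorphism) homotopy classes coincide with isomorphism classes, establishing condition~(1). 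Likewise the underlying functor of $i\colon[F1,\f A]\to\f A$ factors as $\TAlg(F1,\f A)\simeq\TAlgs(F1,\f A)\cong A$, using flexibility of $F1$, so $i$ is an equivalence of categories and (2) holds. For condition~(3), the strict tensor-hom isomorphism $\phi\colon\TAlgs(\f A\otimes\f B,\f C)\cong\TAlgs(\f A,[\f B,\f C])$ fits into a naturality square with the two $\iota$-inclusions and the underlying functor $Ut\colon\TAlg(\f A\otimes\f B,\f C)\to\TAlg(\f A,[\f B,\f C])$; commutativity of the square reduces to the identity $[\f B,f]\circ u=\phi(f)$, which is the defining property of $u$. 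The top map is an isomorphism and both verticals are equivalences (by flexibility of $\f A$ and of $\f A\otimes\f B$), so two-out-of-three forces $Ut$, hence $t$, to be a weak equivalence.

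\emph{Symmetric case.} When $T$ is symmetric, Theorem~\ref{Theorem:SkewAlgebras}(2) endows $(\TAlgs,[-,-],F1)$ with a symmetry $s$ satisfying $S1$--$S4$. Combined with homotopy closedness from part~(1), Proposition~\ref{prop:derivedSymmetry} produces a symmetric closed structure $(Ho(\TAlgs),[-,-]_{r},I,s_{r})$, and Theorem~\ref{Theorem:ClassicalSymmetry}(2) then upgrades the derived monoidal structure $(Ho(\TAlgs),\otimes_{l},I)$ to a symmetric monoidal one.

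\emph{Main obstacle.} The most delicate point is condition~(3): the map $t$ lives between internal homs and is built from $L$ and $u$, so its weak-equivalence status is not immediate from the strict tensor-hom adjointness. The bridge is the Blackwell-Kelly-Power equivalence $\iota\colon\TAlgs(\f A,-)\simeq\TAlg(\f A,-)$ for flexible $\f A$, and this only closes the argument because Axiom MC guarantees that tensoring preserves cofibrancy; pinning down the commutativity of the naturality square and this cofibrancy propagation are the essential content of the proof.
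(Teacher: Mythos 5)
Your overall route is the paper's: verify Axiom MC, identify $v$ with the inclusion $\iota_{\f A,\f B}:\TAlgs(\f A,\f B)\to\TAlg(\f A,\f B)$, treat $i$ via the same inclusion and the evaluation isomorphism, run a two-out-of-three argument against the strict adjointness isomorphism to handle $t$, and finish the symmetric case with Theorem~\ref{Theorem:SkewAlgebras}, Proposition~\ref{prop:derivedSymmetry} and Theorem~\ref{Theorem:ClassicalSymmetry}. Your verification of conditions (1)--(3) of Theorem~\ref{Theorem:hEK} matches the paper's proof step for step.

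There is, however, a genuine gap in your verification of Axiom MC. Proposition~\ref{prop:AxiomGood} cannot ``supply the monoidal half of the axiom'' from right-Quillen-ness of $[\f A,-]$ alone: each of its two equivalent conditions consists of \emph{two} clauses, and you have checked only one of them. The missing clause is that for fibrant $\f C$ the functor $[-,\f C]$ preserves weak equivalences between cofibrant objects (equivalently, on the monoidal side, that $\f A\otimes -$ preserves weak equivalences between cofibrant objects for cofibrant $\f A$); this is an independent hypothesis, not a consequence of the other clause, and it needs its own argument. The paper proves the stronger statement that $[-,\f C]$ preserves \emph{all} weak equivalences, using $U\circ[-,\f C]\cong\TAlg(\iota-,\f C)\cong\TAlgs(Q\iota-,\f C)$ together with the facts that $Q\iota$ sends any weak equivalence to a weak equivalence of cofibrant objects and that $\TAlgs$ is a model 2-category in which every object is fibrant. (Your cofibrancy claim for $\f A\otimes\f B$ is unaffected, since it only needs $-\otimes\f B$ to be left Quillen.) Until this clause is supplied, Axiom MC is not established and Theorem~\ref{Theorem:hEK} cannot be invoked.
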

\begin{proof}
We observed above that each free algebra is cofibrant.  Therefore the unit $F1$ is cofibrant.  We now verify Axiom MC in its closed form: in fact we establish the stronger result that for all $\f A$ the 2-functor $[\f A,-]$ is right Quillen and that $[-,\f A]$ preserves all weak equivalences.  For the first part consider the equality $U \circ [\f A,-]=\TAlgs(\f A,\iota -)$ of \eqref{eq:triangle}.  Since $U$ reflects weak equivalences and fibrations it suffices to show that $\TAlg(\f A,\iota -):\TAlgs \to \Cat$ is right Quillen.  Now  $\TAlg(\f A,\iota -) \cong \TAlgs(Q\f A,-):\TAlgs \to \Cat$ and this last 2-functor is right Quillen since $Q\f A$ is cofibrant and $\TAlgs$ a model 2-category.\\
For the second part we use the commutativity $U \circ [-,\f A] \cong \TAlg(\iota -,\f A)$.  Arguing as before it suffices to show that $\TAlg(\iota -,\f A):\TAlgs \to \Cat$ preserves all weak equivalences or, equally, that the isomorphic $\TAlgs(Q\iota -,\f A)$ does so.  Now if $f:\f B \to \f C$ is a weak equivalence then $Q\iota f$ is a weak equivalence of cofibrant objects. As $\f A$, like all objects, is fibrant and $\TAlgs$ a model 2-category the functor $\TAlgs(Q\iota f,\f A)$ is an equivalence.\\
We now apply Theorem~\ref{Theorem:hEK} to establish that $\TAlgs$ is homotopy monoidal.  To verify the three conditions requires only the information on the underlying functors of $[-,-]$, $L$ and $i$ given in Section~\ref{section:PseudoSkew}.  Firstly we must show that the underlying function of  $$v_{\f A,\f B}:\TAlgs(\f A,\f B) \to \TAlgs(F1, [\f A,\f B])$$ induces a bijection on homotopy classes of maps for cofibrant $\f A$.  Since morphisms with cofibrant domain are homotopic just when isomorphic it will suffice to show that $v_{\f A,\f B}$ is an equivalence of categories.  To this end consider the composite:
$$\xy
(0,0)*+{\TAlgs(\f A,\f B)}="00"; (37,0)*+{\TAlgs(F1,[\f A,\f B])}="10";(78,0)*+{\Cat(1,\TAlg(\f A,\f B))}="20";(112,0)*+{\TAlg(\f A,\f B)}="30";
{\ar^<<<<<{v_{\f A,\f B}} "00";"10"};
{\ar^<<<<{\phi} "10";"20"};
{\ar^<<<<{ev_{\bullet}} "20";"30"};
\endxy$$
in which $\phi$ is the adjointness isomorphism -- recall that $U \circ [-,-] = \TAlg(-,-)$ -- and in which $ev_{\bullet}$ is the evaluation isomorphism.  It suffices to show that the composite is an equivalence.   $v_{\f A,\f B}$ sends $f:\f A \to \f B$ to $[\f A,\f f] \circ j:F1 \to [\f A,\f A] \to [\f A,\f B]$, whose image under $\phi$ is the functor $\TAlg(A,f) \circ \hat{1}_{\f A}:1 \to \TAlg(\f A,\f A) \to \TAlg(\f A,\f B)$.  Evaluating at $\bullet$ thus returns $f$ viewed as a pseudomap.  The action on 2-cells is similar and we conclude that the composite is the inclusion $\iota_{\f A,\f B}:\TAlgs(\f A,\f B) \to \TAlg(\f A,\f B)$.  As per Section~\ref{section:h2monads} this is an equivalence since $\f A$ is cofibrant.\\
Secondly we show that $$i_{\f A}:[F1, \f A] \to \f A$$ is a weak equivalence for all $\f A$: that its underlying functor $i_{\f A}:\TAlg(F1, \f A) \to A$ is an equivalence of categories.  Since $F1$ is cofibrant this is equally to show that the composite  $$i_{\f A} \circ \iota_{F1,\f A}:\TAlgs(F1,\f A) \to \TAlg(F1,\f A) \to A$$ is an equivalence.  An easy calculation shows that this is equally the composite 
$ev_{\bullet} \circ \phi:\TAlgs(F1,\f A) \to \Cat(1,A) \to A$
of the canonical adjunction and evaluation isomorphisms.  Hence $i_{\f A}$ is an equivalence for all $\f A$.\\
Let $u:\f A \to [\f B, \f A \otimes \f B]$ denote the unit of the adjunction $- \otimes \f B \dashv [\f B,-]$.  We are to show that the morphism $t_{\f A, \f B,\f C}$ given by the composite $$[u,1] \circ L:[\f A \otimes \f B,\f C] \to [[\f B, \f A \otimes \f B], [\f B, \f C]] \to [\f A,[\f B,\f C]]$$
is a weak equivalence for cofibrant $\f A$ and $\f B$.   Now the underlying functor of this composite is just the top row below.
\begin{equation*}
\cd{\TAlg(\f A \otimes \f B,\f C) \ar[r]^<<<<{[\f B,-]} & \TAlg([\f B,\f A \otimes \f B],[\f B,\f C]) \ar[rr]^{\TAlg(u,1)} && \TAlg(\f A,[\f B,\f C])\\
\TAlgs(\f A \otimes \f B,\f C)\ar[u]^{\iota} \ar[r]^<<<<{[\f B,-]} & \TAlgs([\f B,\f A \otimes \f B],[\f B,\f C]) \ar[u]^{\iota}\ar[rr]^{\TAlgs(u,1)} && \TAlgs(\f A,[\f B,\f C]) \ar[u]^{\iota}
}
\end{equation*}
In this diagram the left square commutes since $[\f B,-]$ restricts from $\TAlg$ to $\TAlgs$ and the right square since $u$ is a strict algebra map.  The outer vertical arrows are equivalences since both $\f A \otimes \f B$ and $\f A$ are cofibrant: the former using Axiom MC and the latter by assumption.  The bottom row is the adjointness isomorphism so that the top row is an equivalence by two from three.\\
Finally if $T$ is symmetric then, by Theorem~\ref{Theorem:SkewAlgebras}, the skew closed 2-category $(\TAlgs,[-,-],F1)$ is symmetric skew closed.  It now follows from Theorem~\ref{Theorem:hEKs} that $(\TAlgs,\otimes,F1)$ is homotopy symmetric monoidal.
\end{proof}

\subsubsection{The monoidal bicategory \TAlg}
A monoidal bicategory is a bicategory $\C$ equipped with a tensor product $\C \times \C \rightsquigarrow \C$ and unit $I$ together with equivalences $\alpha:(A \otimes B) \otimes C \to A \otimes (B \otimes C)$, $l:I \otimes A \to A$ and $r:A \to A \otimes I$ pseudonatural in each variable, and satisfying higher dimensional variants of the axioms for a monoidal category \cite{Gordon1995Coherence}.  Note that here we mean equivalences in the \emph{2-categorical or bicategorical sense}, as opposed to weak equivalences.\\  
In particular, each skew monoidal 2-category in which the components $\alpha$, $l$ and $r$ are equivalences provides an example of a monoidal bicategory.  The skew monoidal 2-category $(\TAlgs,\otimes,F1)$ is \emph{not} itself a monoidal bicategory.\begin{footnote}{ In fact $l:F1 \otimes \f A  \to \f A$ is an equivalence just when $\f A$ is equivalent to a flexible algebra.  Such algebras are called semiflexible \cite{Blackwell1989Two-dimensional}.}\end{footnote}
However $(\TAlgs,\otimes,F1)$ satisfies Axiom MC and hence, by Proposition~\ref{prop:axiomImplication}, it satisfies Axiom M.  Therefore the skew monoidal structure restricts to the full sub 2-category $\TAlgsc$ of cofibrant objects.  Since $(\TAlgs,\otimes,F1)$  is homotopy monoidal each component $\alpha$, $l$ and $r$ is a weak equivalence of cofibrant objects.  Since all objects are fibrant such weak equivalences are homotopy equivalences: thus equivalences in the 2-categorical sense.  We conclude:
\begin{Proposition}
The skew monoidal structure  $(\TAlgs,\otimes,F1)$ restricts to a skew monoidal 2-category $(\TAlgsc,\otimes,F1)$ which is a monoidal bicategory. 
\end{Proposition}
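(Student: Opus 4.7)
The plan is to verify the two parts of the proposition in order: first that the skew monoidal structure restricts to $\TAlgsc$, then that on this restriction the associator and unitors are equivalences in the 2-categorical sense, which gives a monoidal bicategory.

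For the restriction, I would begin by noting that $F1$, as a free algebra, is cofibrant; this was recalled in Section~\ref{section:h2monads}. By the preceding theorem $(\TAlgs,\otimes,[-,-],F1)$ satisfies Axiom MC, and by Proposition~\ref{prop:axiomImplication} this implies Axiom M. In particular $\otimes$ sends pairs of cofibrant objects to cofibrant objects, so the bifunctor restricts to $\TAlgsc \times \TAlgsc \to \TAlgsc$. Since the constraint maps $\alpha$, $l$ and $r$ and all five skew monoidal axioms are already present in $(\TAlgs,\otimes,F1)$, they restrict without further work to yield the skew monoidal 2-category $(\TAlgsc,\otimes,F1)$.

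The main content is then the promotion from skew monoidal 2-category to monoidal bicategory, for which one must check that the components of $\alpha$, $l$ and $r$ become equivalences in the 2-categorical sense once restricted to cofibrant objects. The preceding theorem tells us that $(\TAlgs,\otimes,F1)$ is homotopy monoidal, so by Proposition~\ref{prop:hmonoidal} each component of $\alpha$, $l$ and $r$ is a weak equivalence of cofibrant objects. Here I use the lifted model 2-category structure of \cite{Lack2007Homotopy-theoretic} on $\TAlgs$, in which all objects are fibrant (as inherited from $\Cat$).

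The key step is therefore to promote a weak equivalence between bifibrant objects in $\TAlgs$ to a 2-categorical equivalence. As recalled in Section~\ref{section:h2monads}, for each algebra $\f B$ the cotensor $[I,\f B]$ (with $I$ the walking isomorphism) is a path object, and consequently two parallel maps $f,g:\f A \rightrightarrows \f B$ with $\f A$ cofibrant are right homotopic precisely when they are isomorphic in $\TAlgs(\f A,\f B)$. Given a weak equivalence $w:\f A \to \f B$ between bifibrant objects one can thus extract from any model-categorical homotopy inverse (which exists by Ken Brown's lemma together with factorisation) a strict morphism $w^{\prime}:\f B \to \f A$ together with 2-isomorphisms $w^{\prime}w \cong 1$ and $ww^{\prime}\cong 1$ in $\TAlgs$; that is, a 2-categorical equivalence. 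Applying this to $\alpha$, $l$ and $r$ at cofibrant arguments yields the required equivalences, and the coherence axioms for the monoidal bicategory then follow from the skew monoidal axioms satisfied on the nose. The only nontrivial step in all of this is the identification of model-theoretic homotopy equivalence with 2-categorical equivalence, which is controlled by the explicit path object $[I,-]$.
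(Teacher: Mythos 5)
Your proposal is correct and follows essentially the same route as the paper: Axiom MC implies Axiom M so the structure restricts to the cofibrant objects, homotopy monoidality makes $\alpha$, $l$ and $r$ weak equivalences between bifibrant objects, and these are then 2-categorical equivalences because right homotopy coincides with 2-isomorphism via the path object $[I,-]$. The only slip is attributing the existence of a homotopy inverse to Ken Brown's lemma -- the relevant fact is the Whitehead theorem for model categories (a weak equivalence between bifibrant objects is a homotopy equivalence, Theorem 1.2.10 of \cite{Hovey1999Model}) -- but this does not affect the argument.
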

In fact $\TAlgsc$ is biequivalent to the 2-category $\TAlg$ of algebras and pseudomorphisms.  
\begin{Lemma}
The 2-adjunction $Q \dashv \iota:\TAlgs \leftrightarrows \TAlg$ restricts to a 2-adjunction $Q \dashv \iota:\TAlgsc 
\leftrightarrows \TAlg$ whose unit and counit are pointwise equivalences.  In particular, the composite inclusion $\iota:\TAlgsc \to \TAlg$ is a biequivalence.
\end{Lemma}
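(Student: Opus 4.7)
The plan is to assemble facts already recorded in the excerpt: the adjunction restricts because $Q$ always produces a cofibrant algebra, then the unit and counit are seen to be pointwise 2-equivalences, and finally the biequivalence statement falls out from a general adjunction argument combined with the local equivalence $\TAlgs(\f A,\f B)\simeq\TAlg(\f A,\f B)$ for flexible $\f A$ already recalled in Section~\ref{section:h2monads}.

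First I would observe the restriction. Since the pseudomorphism classifier $Q\f A$ is always flexible, hence cofibrant, the left 2-adjoint $Q:\TAlg\to\TAlgs$ factors through the full inclusion $\TAlgsc\hookrightarrow\TAlgs$. This at once yields the restricted 2-adjunction $Q\dashv\iota:\TAlgsc\leftrightarrows\TAlg$ with the same unit and counit as before.

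Next I would handle the counit. For $\f A\in\TAlgsc$ the counit component $p_{\f A}:Q\iota\f A\to\f A$ is a trivial fibration in $\TAlgs$, as recorded in Section~\ref{section:h2monads}. Both $Q\iota\f A$ and $\f A$ are cofibrant, and every object of $\TAlgs$ is fibrant, so $p_{\f A}$ is a weak equivalence between bifibrant objects of a model 2-category and therefore a 2-equivalence in $\TAlgs$ --- this is exactly the same passage from weak equivalence to 2-categorical equivalence used in the proposition immediately preceding the lemma. Since $\iota:\TAlgs\to\TAlg$ is the identity on hom-2-cells (and in particular locally fully faithful), it preserves 2-equivalences, so $\iota(p_{\f A})$ is an equivalence in $\TAlg$. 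For the unit, the triangle identity $\iota(p_{\f A})\circ q_{\iota\f A}=1_{\iota\f A}$ in $\TAlg$ exhibits the pseudomap $q_{\iota\f A}$ as a section of a 2-equivalence. A short bicategorical argument---if $f$ is an equivalence with pseudo-inverse $f^{-1}$ and $fg\cong 1$, then $gf\cong f^{-1}(fg)f\cong f^{-1}f\cong 1$---shows $q_{\iota\f A}$ is itself a 2-equivalence.

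For the ``in particular'' part I would argue as follows. Essential surjectivity of $\iota:\TAlgsc\to\TAlg$ is witnessed by the unit: any $\f A\in\TAlg$ is equivalent, via $q_{\f A}$, to the cofibrant $Q\f A\in\TAlgsc$. Local fully faithfulness up to equivalence reduces to showing that $\iota_{\f A,\f B}:\TAlgsc(\f A,\f B)=\TAlgs(\f A,\f B)\to\TAlg(\f A,\f B)$ is an equivalence of categories for cofibrant $\f A$; this is precisely Theorem 4.7 of \cite{Blackwell1989Two-dimensional} in the guise recalled in Section~\ref{section:h2monads}. I do not expect any serious obstacle: every needed ingredient---the flexibility of $Q\f A$, $p_{\f A}$ being a trivial fibration, weak equivalences between bifibrant algebras being 2-equivalences, and the local equivalence $\TAlgs(\f A,\f B)\simeq\TAlg(\f A,\f B)$ for cofibrant $\f A$---has already been assembled in the excerpt, and the only genuine work is the 2-out-of-3 step for the unit, which is a couple of lines.
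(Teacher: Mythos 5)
Your restriction step and your treatment of the counit are fine, and the counit argument is a legitimate replacement for the paper's citation of Theorem 4.4 of \cite{Blackwell1989Two-dimensional}: it is exactly the Whitehead-type passage from weak equivalence between bifibrant objects to $2$-equivalence already used in the proposition preceding the lemma. The gap is in the unit. The unit of the restricted adjunction has a component $q_{\f A}:\f A \rightsquigarrow Q\f A$ at \emph{every} object $\f A$ of $\TAlg$, not only at the cofibrant ones, and your essential-surjectivity argument for the ``in particular'' clause explicitly invokes $q_{\f A}$ at arbitrary $\f A$. But your two-out-of-three step only applies where $\iota(p_{\f A})$ is known to be an equivalence in $\TAlg$, and you derive that from $p_{\f A}$ being an equivalence in $\TAlgs$, which you prove only under the hypothesis that $\f A$ is cofibrant. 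For non-cofibrant $\f A$ that intermediate claim is genuinely false in general: $p_{\f A}:Q\f A\to\f A$ is an equivalence in $\TAlgs$ precisely when $\f A$ is semiflexible (compare the paper's footnote about $l:F1\otimes\f A\to\f A$), so a weak equivalence of $\TAlgs$ with non-cofibrant codomain need not be a $2$-equivalence there, and your route to ``$\iota(p_{\f A})$ is an equivalence in $\TAlg$'' breaks down exactly at the objects you need for essential surjectivity.

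The repair is to obtain the equivalence directly in $\TAlg$, where it holds with no flexibility hypothesis. This is what the paper does: Theorem 4.2 of \cite{Blackwell1989Two-dimensional} constructs, for every $\f A$, an invertible $2$-cell $q_{\f A}\circ p_{\f A}\cong 1$ in $\TAlg$ (using the pseudomorphism structure of $q_{\f A}$), so that $p_{\f A}$ and $q_{\f A}$ are mutually pseudo-inverse equivalences in $\TAlg$ for \emph{all} $\f A$. Alternatively, staying within your model-categorical framework, you could observe that for every $\f X$ the representable $\TAlg(\f X,-)\cong\TAlgs(Q\f X,-)$ sends the weak equivalence $p_{\f A}$ (a map between fibrant objects tested against the cofibrant $Q\f X$) to an equivalence of categories, and then apply the bicategorical Yoneda lemma to conclude that $\iota(p_{\f A})$ is an equivalence in $\TAlg$; your two-out-of-three argument then yields the unit at every object. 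Either repair closes the gap; as written, the proposal does not establish the unit components, nor essential surjectivity of $\iota:\TAlgsc\to\TAlg$, at non-flexible algebras.
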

\begin{proof}
Because each $Q\f A$ is cofibrant/flexible the adjunction restricts.  The unit $q:\f A \rightsquigarrow Q\f A$ is an equivalence by Theorem 4.2 of \cite{Blackwell1989Two-dimensional}.   Since $\f A$ is flexible the counit $p_{\f A}:Q\f A \to \f A$ is an equivalence in $\TAlgs$ by Theorem 4.4 of \emph{ibid}.
\end{proof}
Just as monoidal structure can be transported along an adjoint equivalence of categories, so the structure of a monoidal bicategory may be transported along an adjoint biequivalence.  And we obtain the following result: see Theorem 14 of \cite{Hyland2002Pseudo}.  The  present argument has the advantage of dealing solely with the strict concepts of $\Cat$-enriched category theory until the last possible moment.
\begin{Theorem}
For $T$ an accessible pseudo-commutative 2-monad on $\Cat$ the 2-category $\TAlg$ admits the structure of a monoidal bicategory.
\end{Theorem}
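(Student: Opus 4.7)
The proof strategy is precisely the one outlined in the paragraph preceding the statement: transport the monoidal bicategory structure from $\TAlgsc$ to $\TAlg$ along an adjoint biequivalence. All the necessary inputs have been assembled. The preceding proposition provides the monoidal bicategory $(\TAlgsc,\otimes,F1)$, while the preceding lemma provides the adjoint biequivalence $Q \dashv \iota : \TAlgsc \leftrightarrows \TAlg$ whose unit $q$ and counit $p$ are pointwise equivalences in the $2$-categorical sense.

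Concretely, I would define the transported structure on $\TAlg$ as follows. The tensor product is $\f A \otimes' \f B := Q\f A \otimes Q\f B$, which is an object of $\TAlgsc$ and hence of $\TAlg$, and the unit is $F1$, which is already cofibrant. Pseudofunctoriality of $\otimes'$ on pseudomorphisms is inherited from that of $\otimes$ together with the $2$-functoriality of $Q : \TAlg \to \TAlgsc$. For the associator at $(\f A, \f B, \f C)$ one takes the associator of $\TAlgsc$ at $(Q\f A, Q\f B, Q\f C)$ and conjugates by appropriate applications of the counit $p$ and its pseudo-inverse, so as to absorb the missing applications of $Q$; the result is a pseudo-natural equivalence $(\f A \otimes' \f B) \otimes' \f C \simeq \f A \otimes' (\f B \otimes' \f C)$. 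The left and right unitors are produced analogously from $l$ and $r$ on $\TAlgsc$.

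The main technical obstacle is verifying the Gordon-Power-Street coherence data: the pentagonator, the three unit modifications (left, right and middle), and the three coherence equations they satisfy. I would handle this by appealing to the general principle that algebraic structure expressible at the level of tricategories transports along an adjoint biequivalence of $2$-categories, a consequence of the coherence theory of tricategories developed in Gurski's work. Alternatively, a direct verification is possible: after inserting the invertible unit and counit of $Q \dashv \iota$ at each stage of the relevant pasting diagrams and cancelling them against their pseudo-inverses, each coherence equation for $(\TAlg,\otimes',F1)$ reduces to the corresponding equation for $(\TAlgsc,\otimes,F1)$, which holds by the preceding proposition. Either way $(\TAlg, \otimes', F1)$ acquires the structure of a monoidal bicategory, establishing the claim.
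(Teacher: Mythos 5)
Your proposal is correct and follows exactly the route the paper takes: the paper also obtains the result by transporting the monoidal bicategory structure on $\TAlgsc$ (from the preceding Proposition) along the adjoint biequivalence $Q \dashv \iota$ (from the preceding Lemma), invoking the general principle that monoidal bicategory structure transports along adjoint biequivalences. The paper gives even less detail than you do, simply citing this transport principle, so your explicit description of the transported tensor and constraints is a harmless elaboration of the same argument.
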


\section{Bicategories}\label{section:homotopicalBicats}
We now return to the skew closed category $(\Bicat_{s},Hom,F1)$ of Section~\ref{section:bicategories} and show that it forms part of a monoidal skew closed category that is homotopy symmetric monoidal. A similar, but simpler, analysis yields the corresponding result for the skew structure on $\twocat_{s}$ discussed in Section~\ref{section:bicategories} -- this is omitted.

\subsection{Preliminaries on $\Bicat_{s}$}
To begin with, it will be helpful to discuss some generalities concerning homomorphism classifiers and the algebraic nature of $\Bicat_{s}$.\\
To this end, let us recall that the category $\CatGph$ of $\Cat$-enriched graphs is naturally a \emph{2-category} -- called $\f C\f G$ in \cite{Lack20082-nerves}.  $\f C\f G$ is locally presentable as a 2-category: that is, cocomplete as a 2-category and its underlying category $\CatGph$ is locally presentable.  Section 4 of \emph{ibid.} describes a filtered colimit preserving 2-monad $T$ on $\f C\f G$ whose strict algebras are the bicategories, and whose strict morphisms and pseudomorphisms are the strict homomorphisms and homomorphisms respectively.  The algebra 2-cells are called \emph{icons} \cite{Lack2010Icons}.  We write $\Icon_{s}$ and $\Icon_{p}$ for the corresponding extensions of $\Bicat_{s}$ and $\Bicat$ to 2-categories with icons as 2-cells.  It follows from \cite{Blackwell1989Two-dimensional} that the inclusion $\iota:\Icon_{s} \to \Icon_{p}$ has a left 2-adjoint $Q$: this assigns to a bicategory $A$ its \emph{homomorphism classifier} $QA$.\\
As mentioned $\CatGph$ is locally presentable.  Since $T$ preserves filtered colimits it follows that the category of algebras $\Bicat_{s}$ is locally presentable too, and that the forgetful right adjoint $U:\Bicat_{s} \to \CatGph$ preserve limits and filtered colimits.   Now the three functors from $\CatGph$ to $\Set$ sending a $\Cat$-graph to its set of (0/1/2)-cells respectively are represented by finitely presentable $\Cat$-graphs.  It follows that the composite of each of these with $U$ -- the functors $$(-)_{0}, (-)_{1}, (-)_{2}: \Bicat_{s} \to \Set$$ sending a bicategory to its set of (0/1/2)-cells -- preserves limits and filtered colimits. Now a functor between locally presentable categories has a left adjoint just when it preserves limits and is accessible: preserves $\lambda$-filtered colimits for some regular cardinal $\lambda$.  See, for instance, Theorem 1.66 of \cite{Adamek1994Locally}.  It follows that each of the above three functors has a left adjoint  -- we used the adjoint $F$ to $(-)_{0}$ to construct the unit $F1$ in Section~\ref{section:bicategories}.  

\subsection{The monoidal skew closed structure on $\Bicat_{s}$}
Our goal now is to show that $Hom(A,-):\Bicat_{s} \to \Bicat_{s}$ has a left adjoint for each $A$.  We will establish this by showing that $Hom(A,-)$ preserves limits and is accessible.  As pointed out above, the functors $(-)_{0}, (-)_{1}, (-)_{2}:\Bicat_{s} \to \Set$ preserve limits and filtered colimits.  Since they jointly reflect isomorphisms they also jointly reflect limits and filtered colimits.  Accordingly it will be enough to show that the three functors $$Hom(A,-)_{0}, Hom(A,-)_{1}, Hom(A,-)_{2}:\Bicat_{s} \to \Set$$ preserve limits and are accessible.  We argue case by case.
\begin{enumerate}
\item $Hom(A,B)_{0}$ is the set of homomorphisms from $A$ to $B$.  Hence $Hom(A,-)_{0}$ is naturally isomorphic to $\Bicat_{s}(QA,-)$ where $Q$ is the homomorphism classifier.  Like any representable functor $Hom(A,-)_{0}$ preserves limits and is accessible.
\item $Hom(A,B)_{1}$ is the set of pseudonatural transformations between homomorphisms. Let $Cyl(B)$ denote the following bicategory -- first constructed, in the lax case, in \cite{Benabou1967Introduction} .  The objects of $Cyl(B)$ are the morphisms of $B$ whilst morphisms $(r,s,\theta):f \to g$ are diagrams as below left
\begin{equation}\label{eq:Square}
\xy
(0,0)*+{a}="00";(15,0)*+{b}="10";
 (0,-15)*+{c}="01";(15,-15)*+{d}="11";
{\ar^{r} "00"; "10"}; 
{\ar_{f} "00"; "01"}; 
{\ar_{s} "01"; "11"}; 
{\ar^{g} "10"; "11"}; 
{\ar@{=>}^{\theta}(5,-7)*{};(10,-7)*{}}; 
\endxy
\hspace{2cm}
\xy
(0,0)*+{a}="00";(15,0)*+{b}="10";
 (0,-15)*+{c}="01";(15,-15)*+{d}="11";
{\ar^{r} "00"; "10"}; 
{\ar@/_1pc/_{f} "00"; "01"}; 
{\ar@/^1pc/^{f^{\prime}} "00"; "01"}; 
{\ar_{s} "01"; "11"}; 
{\ar@/^1pc/^{g^{\prime}} "10"; "11"}; 
{\ar@{=>}^{\alpha}(-2,-7)*{};(2,-7)*{}}; 
{\ar@{=>}^{\theta^{\prime}}(9,-7)*{};(14,-7)*{}}; 
\endxy
\hspace{0.1cm}
\xy
(0,-8)*+{=};
\endxy
\hspace{0.1cm}
\xy
(0,0)*+{a}="00";(15,0)*+{b}="10";
 (0,-15)*+{c}="01";(15,-15)*+{d}="11";
{\ar^{r} "00"; "10"}; 
{\ar@/_1pc/_{f} "00"; "01"}; 
{\ar_{s} "01"; "11"}; 
{\ar@/_1pc/_{g} "10"; "11"}; 
{\ar@/^1pc/^{g^{\prime}} "10"; "11"}; 
{\ar@{=>}^{\theta}(1,-7)*{};(6,-7)*{}}; 
{\ar@{=>}^{\beta}(13,-7)*{};(17,-7)*{}}; 
\endxy
\end{equation}
in which $\theta$ is invertible.  2-cells of $Cyl(B)$ consist of pairs of 2-cells $(\alpha,\beta)$ satisfying the equality displayed above right.  Note that here are strict projection homomorphisms $d,c:Cyl(B) \rightrightarrows B$ which, on objects, respectively select the domain and codomain of an arrow.\\
It is straightforward to see that we have a natural isomorphism of functors $Hom(A,-)_{1} \cong \Bicat(A,Cyl(-))$.  Combining this with $\Bicat(A,-) \cong \Bicat_{s}(QA,-)$  gives an isomorphism $Hom(A,-)_{1} \cong \Bicat_{s}(QA,Cyl(-))$.  Since this is the composite $\Bicat_{s}(QA,-) \circ Cyl(-):\Bicat_{s} \to \Bicat_{s} \to \Set$ whose second component is representable, it will suffice to show that $Cyl(-)$ preserves limits and is accessible.\\
For this, arguing as before, it is enough to show that each of $Cyl(-)_{0}$, $Cyl(-)_{1}$ and $Cyl(-)_{2}$ preserves limits and is accessible.  Certainly we have $Cyl(B)_{0} \cong B_{1}$ naturally in $B$ and $(-)_{1}$ preserves limits and filtered colimits.  We construct $Cyl(-)_{1}$ as a finite limit in four stages.  These stages correspond to the sets constructed below:
\begin{IEEEeqnarray*}{llu}
Opp(B_{2})=\{(x,y): x,y \in B_{2},  sx=ty,  tx=sy\}\\
Comp(B_{1})=\{(a,b):a,b \in B_{1},ta=sb\}\\
Iso(B_{2})=\{(x,y) \in Opp(B_{2}): y \circ x = i_{ty}, x \circ y = i_{tx}\}\\
Cyl(B)_{1}=\{(a,b,c,d,x,y):(x,y) \in Iso(B_{2}), \\
\qquad \qquad \qquad (a,b),(c,d) \in Comp(B_{1}),   sx = b \circ a, tx=d \circ c\}
\end{IEEEeqnarray*}
the first three of which, in turn, define the sets of pairs of 2-cells pointing in the opposite direction, of composable pairs of 1-cells, and of invertible 2-cells.  Each stage corresponds to the finite limit in $\CAT(\Bicat_{s},\Set)$ below.
\begin{equation*}
\cd{
Opp(B_{2}) \ar[r] & (B_{2})^{2} \ar@/^1.5ex/[r]^{(sx,tx)} \ar@/_1.5ex/[r]_{(ty,sy)} & (B_{1})^{2}
}
\end{equation*}
\begin{equation*}
\cd{Comp(B_{1}) \ar[r] \ar[d] & B_{1} \ar[d]_{s} & Iso(B_{2}) \ar[r] \ar[d] & Opp(B_{2})  \ar[d]_{(y \circ x,x \circ y)} & Cyl(B)_{1} \ar[r] \ar[d] & Iso(B_{2}) \ar[d]_{(sx,tx)} \\
B_{1} \ar[r]^{t} & B_{0} & (B_{1})^{2} \ar[r]^{i^{2}} & (B_{2})^{2} & Comp(B_{1})^{2} \ar[r]^<<<<<{(b \circ a,d \circ c)} & (B_{1})^{2}}
\end{equation*}
Now each of the functors $(-)_{0}$,$(-)_{1}$ and $(-)_{2}$ preserve finite limits and filtered colimits.  Since finite limits commute with limits and filtered colimits in $\Set$ it follows that each constructed functor, and in particular, $Cyl(-)_{1}$ preserves limits and filtered colimits.  Another pullback followed by an equaliser constructs $Cyl(-)_{2}$ and shows it to have the same preservation properties: we leave this case to the reader.
\item Finally observe that we can express $Hom(A,B)_{2}$ as the equaliser of the two functions $$Hom(A,Cyl(B))_{1} \rightrightarrows \Bicat(A,B)^{2}$$
 sending an element $\alpha:f \Rightarrow g$ of $Hom(A,Cyl(B))_{1}$ to the pair $(df,cf)$ and $(dg,cg)$ respectively.  This is natural in $B$.  Therefore $Hom(A,-)_{2}$ is a finite limit of functors, each of which has already been shown to preserve limits and be accessible.  Since finite limits in $\Set$ commute with limits and with $\lambda$-filtered colimits for each regular cardinal $\lambda$ it follows that $Hom(A,-)_{2}$ preserves limits and is itself accessible. 
\end{enumerate}
We conclude:
\begin{Proposition}
For each bicategory $A$ the functor $Hom(A,-):\Bicat_{s} \to \Bicat_{s}$ has a left adjoint $- \otimes A$.  In particular we obtain a monoidal skew closed category $(\Bicat_{s},\otimes, Hom,F1)$.
\end{Proposition}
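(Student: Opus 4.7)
The plan is to apply the special adjoint functor theorem for locally presentable categories: any limit-preserving accessible functor between locally presentable categories admits a left adjoint. Since $\Bicat_{s}$ has already been identified as locally presentable, it suffices to show that $Hom(A,-):\Bicat_{s} \to \Bicat_{s}$ preserves limits and is accessible. This is the strategy explicitly announced above the proposition, and my plan is simply to confirm that the case-by-case analysis just completed does indeed deliver this.

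To reduce the check on $Hom(A,-)$ to something tractable, I would exploit the three ``cell'' functors $(-)_{0}, (-)_{1}, (-)_{2}:\Bicat_{s} \to \Set$. These preserve limits and filtered colimits (as composites $\Bicat_{s} \to \CatGph \to \Set$ of two such functors, the second being representable by a finitely presentable $\Cat$-graph), and they jointly reflect isomorphisms, hence jointly reflect limits and filtered colimits. So it is enough to verify that each composite $Hom(A,-)_{i}:\Bicat_{s} \to \Set$ for $i = 0,1,2$ preserves limits and is accessible. Case $i=0$ is immediate from $Hom(A,-)_{0} \cong \Bicat_{s}(QA,-)$. Case $i=1$ follows by writing $Hom(A,-)_{1} \cong \Bicat_{s}(QA,Cyl(-))$ and observing that the three cell-functors of $Cyl(-)$ are built from the $(-)_{j}$ by finite limits, which commute both with limits and with filtered colimits in $\Set$. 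Case $i=2$ follows from the equaliser presentation of $Hom(A,-)_{2}$ in terms of $Hom(A,Cyl(-))_{1}$ and $\Bicat(A,-)$, again by finite-limit stability.

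Having obtained left adjoints $-\otimes A \dashv Hom(A,-)$ for every $A$, I assemble them into a bifunctor $\otimes:\Bicat_{s} \times \Bicat_{s} \to \Bicat_{s}$ in the standard way, producing natural isomorphisms $\phi:\Bicat_{s}(B \otimes A,C) \cong \Bicat_{s}(B,Hom(A,C))$. Section~\ref{section:bicategories} already provides a skew closed structure $(\Bicat_{s},Hom,F1,L,i,j)$, so Street's correspondence recalled in Section~\ref{section:correspondence} lifts this uniquely, via the $\phi$, to a monoidal skew closed structure $(\Bicat_{s},\otimes,Hom,F1)$, with associator and unit constraints determined by \eqref{eq:leftunit}, \eqref{eq:rightunit} and \eqref{eq:ass}.

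The substantive obstacle is essentially the preservation-and-accessibility verification for $Hom(A,-)_{1}$ and $Hom(A,-)_{2}$; once the cylinder bicategory $Cyl(B)$ is introduced and its cell-functors are recognised as iterated finite limits of the $(-)_{j}$, the argument is mechanical. Beyond that point the adjoint functor theorem and Street's correspondence make the proposition essentially automatic, so no further genuinely new work is required.
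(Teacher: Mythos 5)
Your proposal is correct and follows essentially the same route as the paper: reduce the existence of the left adjoint to limit-preservation and accessibility of $Hom(A,-)$ via the jointly isomorphism-reflecting cell functors $(-)_{0},(-)_{1},(-)_{2}$, handle the three cases using the homomorphism classifier $QA$ and the cylinder construction $Cyl(-)$ built from finite limits, and then invoke Street's correspondence to obtain the monoidal skew closed structure.
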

\subsection{Homotopical behaviour of the skew structure}
We turn to the homotopical aspects of the skew structure. 
\subsubsection{The model structure on $\Bicat_{s}$}
A 1-cell $f:X \to Y$ in a bicategory $A$ is said to be an \emph{equivalence} if there exists $g:Y \to X$ and isomorphisms $1_{X} \cong gf$ and $1_{Y} \cong fg$.  Now a homomorphism of bicategories $F:A \rightsquigarrow B$ is said to be a biequivalence if it is essentially surjective up to equivalence (given $Y \in B$ there exists $X \in A$ and an equivalence $Y \to FX$) and locally an equivalence: each functor $F_{X,Y}:A(X,Y) \to B(FX,FY)$ is an equivalence of categories.\\
The relevant model structure on $\Bicat_{s}$ was constructed in \cite{Lack2004A-quillen}. The weak equivalences are those strict homomorphisms that are biequivalences.  A strict homomorphism  $F:A \to B$ is said to be a fibration if it has the following two properties (1) if $f:Y \to FX$ is an equivalence then there exists an equivalence $f^{\star}:Y^{\star} \to X$ with $Ff^{\star}=f$ and (2) each $F_{X,Y}:A(X,Y) \to B(FX,FY)$ is an isofibration of categories.  We note that all objects are fibrant.\\
The only knowledge that we require of the cofibrant objects is that each homomorphism classifier $QA$ is cofibrant.  To see this observe that if $f:A \to B$ is a trivial fibration then there exists a homomorphism $g:B \rightsquigarrow A$ with $f \circ g=1$.  Since the inclusion $\iota:\Bicat_{s} \to \Bicat$ sends each trivial fibration to a split epimorphism, and since split epis can be lifted through any object, an adjointness argument applied to $Q \dashv \iota$ shows that each $QA$ is cofibrant.  By Theorem 4.2 of \cite{Blackwell1989Two-dimensional} the counit $p_{A}:QA \to A$ is a surjective equivalence -- equivalence plus split epi -- in the 2-category $\Icon_{p}$.  Therefore $p_{A}$ is a trivial fibration and so exhibits $QA$ as a \emph{cofibrant replacement} of $A$.\\
The right homotopy relation on $\Bicat_{s}(A,B)$ is \emph{equivalence} in the bicategory $Hom(A,B)$.  Where needed, we will use the term \emph{pseudonatural equivalence} for clarity.  We note that a morphism $\eta:F \to G \in Hom(A,B)$ is an equivalence just when each component $\eta_{X}:FX \to GX$ is an equivalence in $B$.  That pseudonatural equivalence coincides with right homotopy follows from the fact, used in \emph{ibid.}, that the full sub-bicategory $PB$ of $Cyl(B)$, with objects the equivalences, is a path object for $B$.  In particular, if $A$ is a cofibrant bicategory then $F,G:A \rightrightarrows B$ are homotopic just when they are equivalent in $Hom(A,B)$.
\subsubsection{Homotopy monoidal structure}
Finally, we are in a position to prove the main theorem of this section.
\begin{Theorem}
The monoidal skew closed structure $(\Bicat_{s},\otimes,Hom,F1)$ satisfies Axiom MC and is homotopy symmetric monoidal.
\end{Theorem}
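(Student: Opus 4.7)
The plan is to verify Axiom MC and then invoke the symmetric homotopical Eilenberg--Kelly theorem, Theorem~\ref{Theorem:hEKs}. Since $(\Bicat_{s}, Hom, F1, s)$ was shown to be symmetric skew closed in Section~\ref{section:bicategories}, it will suffice to establish Axiom MC together with the two conditions of Proposition~\ref{prop:hclosed2} that ensure homotopy closedness. Cofibrancy of $F1$ is immediate: both $F1$ and $Q1$ corepresent $(-)_{0}\colon \Bicat_{s}\to\Set$ (the former by construction, the latter via $\Bicat_{s}(Q1,B)\cong\Bicat(1,B)=B_{0}$), so $F1\cong Q1$, and every homomorphism classifier is cofibrant.

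For Axiom MC in its closed form, preservation of biequivalences by $Hom(A,-)$ and by $Hom(-,B)$ follows from a quasi-inverse argument: a biequivalence $f$ admits a (generally non-strict) homomorphism quasi-inverse $g$ with $fg\cong 1$ and $gf\cong 1$, and post- or precomposition with $g$ supplies a quasi-inverse in $\Bicat$ to the induced map of hom-bicategories, which is therefore itself a biequivalence. Preservation of fibrations by $Hom(A,-)$ is a more intricate componentwise verification from the characterisation of fibrations in \cite{Lack2004A-quillen}: given a fibration $f\colon X\to Y$ and a pseudonatural equivalence $\eta\colon F\To f\circ G$ in $Hom(A,Y)$, each component $\eta_{a}$ lifts via the equivalence-lifting property of $f$ to an equivalence $\eta_{a}^{*}\colon F^{*}(a)\to G(a)$; these organise, with coherent connecting 2-cells, into a pseudofunctor $F^{*}$ and a pseudonatural equivalence $\eta^{*}\colon F^{*}\To G$ strictly lifting $\eta$, and the local isofibration condition on $f$ transfers in an analogous componentwise manner to invertible modifications.

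For condition (2) of Proposition~\ref{prop:hclosed2}, freeness of $F1$ on a single object means that a pseudofunctor $F1\rightsquigarrow A$, pseudonatural transformation, or modification is determined up to unique coherent isomorphism by its value at $\bullet$, whence the evaluation map $i\colon Hom(F1,A)\to A$ is a biequivalence. For condition (1), the identification $\Bicat_{s}(F1,Hom(A,B))\cong Hom(A,B)_{0}=\Bicat(A,B)$ recasts $v$ as the inclusion $\Bicat_{s}(A,B)\hookrightarrow\Bicat(A,B)$; homotopy in $\Bicat_{s}$ is pseudonatural equivalence, so we must show that for cofibrant $A$ every homomorphism $A\rightsquigarrow B$ is pseudonaturally equivalent to a strict one. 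By cofibrancy of $A$, the trivial fibration $p_{A}\colon QA\to A$ splits strictly by some $s\colon A\to QA$; applying the preservation already established, $Hom(A,p_{A})$ is a biequivalence, so the two sections $s$ and $q_{A}$ of $p_{A}$ are pseudonaturally equivalent. Given $f\colon A\rightsquigarrow B$, letting $\widetilde f\colon QA\to B$ be the strict homomorphism with $\widetilde f\circ q_{A}=f$, the strict composite $\widetilde f\circ s$ is then pseudonaturally equivalent to $f$; injectivity on homotopy classes is immediate. Applying Theorem~\ref{Theorem:hEKs} then finishes the argument, and the main obstacle I anticipate is precisely the delicate coherent lifting required for the fibration-preservation half of Axiom MC.
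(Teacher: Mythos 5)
Your overall strategy coincides with the paper's: verify Axiom MC (with the componentwise equivalence-lifting construction for $Hom(C,F)$ as the technical core), verify the two conditions of Proposition~\ref{prop:hclosed2}, and invoke Theorem~\ref{Theorem:hEKs}. Where you genuinely diverge is in condition (1). The paper uses axiom (C2) to exhibit $Hom(j,1)\circ L$ as a one-sided inverse of the biequivalence $i$, deduces by two-out-of-three that its underlying function $\Bicat(A,B)\to\Bicat(F1,Hom(A,B))$ is a bijection on pseudonatural equivalence classes, and then transfers this to $v$ across the inclusions $\iota_{X,Y}\cong\Bicat_{s}(p_{X},1)$. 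You instead recast $v$ directly as the inclusion $\Bicat_{s}(A,B)\hookrightarrow\Bicat(A,B)$ and prove essential surjectivity up to pseudonatural equivalence by hand: a strict section $s$ of $p_{A}$ (from cofibrancy) and the unit $q_{A}$ both map to $1_{A}$ under the biequivalence $Hom(A,p_{A})$, hence are equivalent, and postcomposing with the strict classifying map $\widetilde f$ shows $f\simeq\widetilde f\circ s$. This is correct and arguably more self-contained -- it re-proves the flexibility statement rather than quoting it -- at the cost of needing (and you should make explicit) that the identification $\Bicat_{s}(F1,Hom(A,B))\cong\Bicat(A,B)$ carries the right homotopy relation to pseudonatural equivalence in $Hom(A,B)$, which follows from $i\colon Hom(F1,X)\to X$ being a biequivalence.

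There is, however, one step that fails: the claim $F1\cong Q1$. The homomorphism classifier $Q1$ of the terminal bicategory satisfies $\Bicat_{s}(Q1,B)\cong\Bicat(1,B)$, and $\Bicat(1,B)$ is \emph{not} $B_{0}$: a homomorphism $1\rightsquigarrow B$ carries the extra data of an endo-1-cell $f=P(1_{\bullet})$ on $Pb$ together with an isomorphism $1_{Pb}\cong f$, so whenever some $B(b,b)$ contains an object isomorphic to but distinct from $1_{b}$ there are several homomorphisms over the single object $b$. Thus $Q1$ and $F1$ corepresent different functors and are not isomorphic (they are merely biequivalent), and your cofibrancy argument for the unit does not go through as written. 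The conclusion is still true, and the fix is cheap: $F\dashv(-)_{0}$, the functor $(-)_{0}$ sends trivial fibrations to surjections, and surjections lift against the one-point set, so by adjointness $F1$ lifts against all trivial fibrations. With that repair, and with the two-stage correction in the fibration-lifting argument spelled out (one first obtains $H^{\star}$ with $FH^{\star}$ agreeing with $H$ only up to an invertible icon, and must then use the local isofibration property to rectify it to a strict lift), your proof is sound.
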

\begin{proof}
Firstly we show that the unit $F1$  is cofibrant.  Recall that $F$ is left adjoint to $(-)_{0}:\Bicat_{s} \to \Set$.  Since $(-)_{0}$ sends trivial fibrations to surjective functions, and since surjective functions can be lifted through $1$, it follows by adjointness that $F1$ is cofibrant.\\
In order to verify the remainder of Axiom MC we use the well known fact, see for example \cite{Street1980Fibrations}, that a homomorphism $F:A \rightsquigarrow B$ is a biequivalence if and only if there exists $G:B \rightsquigarrow A$ and equivalences $1_{A} \to GF$ and $1_{B} \to FG$.
A consequence is that if $F:A \to B$ is a biequivalence then so is $Hom(C,F)$ and $Hom(F,D)$ for all $C$ and $D$.\\
To verify Axiom MC, it remains to show that if $C$ is cofibrant and $F$ a fibration, then $Hom(C,F)$ is a fibration: in fact, we will show that this is true for all $C$.
To see that $Hom(C,F):Hom(C,A) \to Hom(C,B)$ is locally an isofibration, consider $\alpha:G \to H \in Hom(C,A)$ and $\theta:\beta \cong F\alpha$.  Then each component $\theta_{X}$ is invertible in $B$ and so lifts along $F$ as depicted below.
$$
\xy
(0,0)*+{GX}="00"; (30,0)*+{HX}="10"; 
{\ar@/^1.2pc/^{\beta^{\star}_{X}} "00";"10"};
{\ar@/_1.2pc/_{\alpha_{X}} "00";"10"};
{\ar@{=>}^{\theta^{\star}_{X}}(15,4)*+{};(15,-4)*+{}};
\endxy
\xy
(0,0)*+{}="00"; (10,0)*+{}="10"; 
{\ar@{|->}^{F} "00";"10"};
\endxy
\xy
(0,0)*+{FGX}="00"; (30,0)*+{FHX}="10"; 
{\ar@/^1.2pc/^{\beta_{X}} "00";"10"};
{\ar@/_1.2pc/_{F\alpha_{X}} "00";"10"};
{\ar@{=>}^{\theta_{X}}(15,4)*+{};(15,-4)*+{}};
\endxy
$$

The components $\beta^{\star}_{X}:GX \to HX$ admit a unique extension to a pseudonatural transformation $\beta^{\star}$ such that $\theta^{\star}:\beta^{\star} \to \alpha$ is  a modification: at $f:X \to Y$ the 2-cell ${\beta^{\star}}_{f}$ is given by:
$$\xy
(0,0)*+{Hf \circ \beta^{\star}_{X}}="00"; (30,0)*+{Hf \circ \alpha_{X}}="10"; (60,0)*+{\alpha_{Y} \circ Gf}="20";  (90,0)*+{\beta^{*}_{Y}\circ Gf}="30"; 
{\ar@{=>}^{Hf \circ \theta^{\star}_{X}} "00";"10"};
{\ar@{=>}^{\alpha_{f}} "10";"20"};
{\ar@{=>}^{(\theta^{\star}_{Y})^{-1} \circ Gf} "20";"30"};
\endxy$$
Then $F\beta^{\star}=\beta$ and we conclude that $Hom(C,F)$ is locally an isofibration.\\
It remains to show that $Hom(C,F)$ has the equivalence lifting property.  So consider $G:C \to A$ and an equivalence $\alpha:H \to FG \in Hom(C,B)$: a pseudonatural transformation with each component $\alpha_{X}:HX \to FGX$ an equivalence in $B$.  Since $F$ is a fibration there exists an equivalence $\beta_{X}:H^{\star}X \to GX \in A$ with $F\beta_{X}=\alpha_{X}$.  Each such equivalence forms part of an adjoint equivalence $(\eta_{x},\beta_{X} \dashv \rho_{x},\epsilon_{x})$ and at $f:X \to Y$ we define $H^{\star}(f):H^{\star}X \to H^{\star}Y$ as the conjugate
\begin{equation*}
\cd{
H^{\star}X \ar[r]^{\beta_{X}} & GX \ar[r]^{Gf} & GY \ar[r]^{\rho_{Y}} & H^{\star}Y
}
\end{equation*}
in which we take, as a matter of convention, this to mean $(\rho_{Y} \circ Gf) \circ \beta_{X}$.  With the evident extension to 2-cells $H^{\star}$ becomes a homomorphism.  Moreover the morphisms $\beta_{X}$ naturally extend to an equivalence $\beta:H^{\star} \to G \in Hom(C,A)$.\\
Although $FH^{\star}X=HX$ for all $X$ it is not necessarily the case that $Hf=FH^{\star}f$.  Rather, we only have invertible 2-cells $\phi_{f}:Hf \cong FH^{\star}f$ corresponding to the pasting diagram below.
$$\xy
(0,0)*+{HX}="00"; (25,10)*+{HY}="11";(25,-10)*+{FGX}="1-1"; (50,0)*+{FGY}="20";  (75,0)*+{HY}="30"; 
{\ar^{Hf} "00";"11"};
{\ar^{\alpha_{Y}} "11";"20"};
{\ar_{FGf} "1-1";"20"};
{\ar_{\alpha_{X}} "00";"1-1"};
{\ar_{F\rho_{Y}} "20";"30"};
{\ar@/^1.5pc/^{1_{Y}} "11";"30"};
{\ar@{=>}^{(\alpha_{f})^{-1}}(25,4)*+{};(25,-4)*+{}};
{\ar@{=>}^{F\eta_{Y}}(50,8)*+{};(50,3)*+{}};
\endxy$$
Indeed $\phi:H \cong FH^{\star}$ is an invertible icon in the sense of \cite{Lack2010Icons}.  Since $F$ is locally an isofibration these lift to invertible 2-cells $\phi^{\star}(f):H^{\star \star}(f) \cong H^{\star}f$.  Moreover $H^{\star \star}$ becomes a homomorphism, unique such that the above 2-cells yield an invertible icon $\phi^{\star}:H^{\star \star} \cong H^{\star}$.  Composing $\phi^{\star}:H^{\star \star} \cong H^{\star}$ and $\beta:H^{\star} \to G$ gives the sought after lifted equivalence.  This completes the verification of Axiom MC.\\
From Section~\ref{section:bicategories} we know that $(\Bicat_{s},Hom,F1)$ forms  a symmetric skew closed category.  According to Theorem~\ref{Theorem:hEKs} the skew monoidal $(\Bicat_{s},\otimes,F1)$ will form part of a homotopy symmetric monoidal category so long as $(\Bicat_{s},Hom,F1)$ is homotopy closed.\\
Firstly we show that $i=ev_{\bullet}:Hom(F1,A) \to A$ is a biequivalence for each $A$.  As pointed out in Section~\ref{section:bicategories} $F1$ has a single object and each parallel pair of 1-cells is connected by a unique invertible 2-cell.  Therefore the map $!:F1 \to 1$ is a biequivalence.  Hence $Hom(!,A):Hom(1,A) \to Hom(F1,A)$ is a biequivalence whereby it suffices to show that the composite $ev_{\bullet} \circ Hom(!,A)$ is a biequivalence.  This is just $ev_{\bullet}:Hom(1,A) \to A$.  It is straightforward, albeit tedious, to verify that this last map is a biequivalence directly.  For a quick proof we can use the fact that for each bicategory $A$ there is a strict 2-category $st(A)$ and biequivalence $p:A \rightsquigarrow st(A)$.  Since evaluation is natural in all homomorphisms the square below left commutes
\begin{equation*}
\cd{Hom(1,A) \ar[d]_{ev_{\bullet}} \ar@{~>}[rr]^{Hom(1,p)} && Hom(1,st(A)) \ar[d]_{ev_{\bullet}} & Ps(1,st(A)) \ar[l]_{\iota} \ar[dl]^{ev_{\bullet}} \\
A \ar@{~>}[rr]_{p} && st(A)}
\end{equation*}
and since both horizontal arrows are biequivalences it suffices to show that $ev_{\bullet}:Hom(1,st(A)) \to st(A)$ is a biequivalence.  Now let $Ps(1,st(A)) \to Hom(1,st(A))$ be the full sub 2-category containing the 2-functors.  It is easy to see that $\iota$ is essentially surjective up to equivalence -- $1$ \emph{is} a cofibrant 2-category! -- and hence a biequivalence.  Therefore we need only show that the composite $ev_{\bullet}:Ps(1,st(A)) \to st(A)$ is a biequivalence.  It is an isomorphism.\\
Finally we show that the function $v:\Bicat_{s}(A,B) \to \Bicat_{s}(F1,Hom(A,B))$ given by the composite
$$\xy
(0,0)*+{\Bicat_{s}(A,B)}="00"; (49,0)*+{\Bicat_{s}(Hom(A,A),Hom(A,B))}="10";(106,0)*+{\Bicat_{s}(F1,Hom(A,B))}="20";
{\ar^<<<<<{Hom(A,-)} "00";"10"};
{\ar^<<<<{\Bicat_{s}(j,1)} "10";"20"};
\endxy$$
is a bijection on homotopy classes of maps for each cofibrant $A$.  Firstly consider the strict homomorphism
\begin{equation*}
\cd{ Hom(A,B) \ar[r]^<<<<{L} & Hom(Hom(A,A),Hom(A,B)) \ar[r]^<<<<{Hom(j,1)} & Hom(F1,Hom(A,B))}
\end{equation*}
of bicategories.  By (C2) it composes with $i:Hom(F1,Hom(A,B)) \to Hom(A,B)$ to give the identity.  Since this last map is a biequivalence so too is $Hom(j,1) \circ L$ by two from three.  It follows that its underlying function $\Bicat(j,1) \circ Hom(A,-):\Bicat(A,B) \to \Bicat(F1,Hom(A,B))$ induces a bijection on equivalence classes of objects: pseudonatural equivalence classes of homomorphisms.\\
Now we have a commutative diagram
\begin{equation*}
\cd{\Bicat_{s}(A,B) \ar[d]_{\iota} \ar[rrrrr]^{ \Bicat_{s}(j,1) \circ Hom(A,-) } &&&&& \Bicat_{s}(F1,Hom(A,B)) \ar[d]^{\iota} \\
\Bicat(A,B) \ar[rrrrr]^{ \Bicat(j,1) \circ Hom(A,-)} &&&&& \Bicat(F1,Hom(A,B))}
\end{equation*}
in which the vertical functions are the inclusions.  Each of the four functions is well defined on pseudonatural equivalence classes: it follows, by two from three, that the top function will determine a bijection on pseudonatural equivalence classes if the two vertical inclusions do so.  More generally, if $X$ is a cofibrant bicategory the inclusion $\iota_{X,Y}:\Bicat_{s}(X,Y) \to \Bicat(X,Y)$ induces a bijection on pseudonatural equivalence classes.  For we can identify this inclusion, up to isomorphism, with the function $$\Bicat_{s}(p_{X},1):\Bicat_{s}(X,Y) \to \Bicat_{s}(QX,Y)$$
where $p_{X}:QX \to X$ is the counit of the adjunction $Q\dashv \iota$.  Since $p_{X}:QX \to X$ exhibits $QX$ as a cofibrant replacement of $X$, and so is a weak equivalence between cofibrant objects, it follows -- see, for instance, Proposition 1.2.5 of \cite{Hovey1999Model} -- that $\Bicat_{s}(p_{X},1)$ induces a bijection on homotopy classes, that is, pseudonatural equivalence classes, of morphisms.\\
From Section~\ref{section:bicategories} we know that $(\Bicat_{s},Hom,F1)$ forms a symmetric skew closed category.  Since it is homotopy closed we conclude from Theorem~\ref{Theorem:hEKs} that the skew monoidal $(\Bicat_{s},\otimes,F1)$ is homotopy symmetric monoidal.
\end{proof}

\section{Appendix}
\subsection{Proof of Theorem~\ref{Theorem:TotalDerived}.}
The isomorphism $$\phi^{d}:Ho(\C)(A \otimes_{l} B,C) \cong Ho(\C)(A,[B,C]_{r})$$ given by the composite 
\begin{equation*}
\def\objectstyle{\scriptstyle}
\def\labelstyle{\scriptstyle}
\cd{
Ho(\C)(QA\otimes QB,C) \ar[r]^<<<<<{(1,q)} & Ho(\C)(QA\otimes QB,RC) \ar[r]^{\phi} & Ho(\C)(QA,[QB,RC]) \ar[r]^{(p,1)^{-1}} & Ho(\C)(A,[QB,RC])}
\end{equation*}
is natural in each variable in $Ho(\C)$ since each component is natural in $\C$.\\
Now the left and right derived structures have components $(Ho(\C),\otimes_{l},I,\alpha_{l},l_{l},r_{l})$ and $(Ho(\C),[-,-]_{r},I,L_{r},i_{r},j_{r})$ respectively.  We must prove that these components are related by the equations ~\eqref{eq:leftunit}, \eqref{eq:rightunit} and \eqref{eq:ass} of Section~\ref{section:correspondence}.\\
For \eqref{eq:leftunit} we must show that the diagram
\begin{equation}\label{eq:vtriangle}
\cd{
Ho(\C)(A,B) \ar[r]^<<<<{(l_{l},1)} \ar[dr]_{v_{r}} & Ho(\C)(QI \otimes QA,B) \ar[d]^{\phi^{d}} \\
& Ho(\C)(I,[QA,RB])
}
\end{equation}
commutes for all $A$ and $B$. By naturality it suffices to verify commutativity in the case that $A$ is cofibrant and $B$ is fibrant.  By definition $v_{r}$ is the composite
\begin{equation*}
\def\objectstyle{\scriptstyle}
\def\labelstyle{\scriptstyle}
\cd{
Ho(\C)(A,B) \ar[r]^{Ho([QA,R-])} & Ho(\C)([QA,RA],[QA,RB]) \ar[r]^{([p,q],1)} & Ho(\C)([A,A],[QA,RB]) \ar[r]^{(j,1)} & Ho(\C)(I,[QA,RB]) 
}
\end{equation*}
Since $A$ is cofibrant and $B$ fibrant we can identify $Ho(\C)(A,B)$ with the set of homotopy classes $[f]:A \to B$ of morphisms $f:A \to B$; then $v_{r}([f])$ is the homotopy class of
\begin{equation*}
\cd{
I \ar[r]^{j} & [A,A] \ar[r]^{[p,q]} & [QA,RA] \ar[r]^{[1,Rf]} & [QA,RB]
}
\end{equation*}
which, by naturality of $p$ and $q$, coincides with the homotopy class of
\begin{equation*}\label{eq:shortpath}
\cd{
I \ar[r]^{j} & [A,A] \ar[r]^{[1,f]} & [A,B] \ar[r]^{[p,q]} & [QA,RB] & .
}
\end{equation*}
Therefore the shorter path in the diagram below is $v_{r}$.  The longer path below is, by definition, the longer path of the triangle 
\eqref{eq:vtriangle}.  Accordingly we must show that the following diagram commutes.
\begin{equation}\label{eq:vtriangleH}
\def\objectstyle{\scriptstyle}
\def\labelstyle{\scriptstyle}
\cd{
Ho(\C)(A,B) \ar[r]^{(p,1)} \ar[dr]_{(p,q)} \ar[dd]_{v} & Ho(\C)(QA,B) \ar[r]^{(l,1)} \ar[d]^{(1,q)} & Ho(\C)(I \otimes QA,B) \ar[d]_{(1,q)} \ar[r]^{(p \otimes 1,1)} & Ho(\C)(QI \otimes QA,B) \ar[d]^{(1,q)}\\
& Ho(\C)(QA,RB)\ar[r]^{(l,1)} \ar[dr]_{v} & Ho(\C)(I \otimes QA,RB) \ar[r]^{(p \otimes 1,1)} \ar[d]^{\phi} & Ho(\C)(QI \otimes QA,RB) \ar[d]^{\phi}\\
Ho(\C)(I,[A,B]) \ar[rr]^{(1,[p,q])} && Ho(\C)(I,[QA,RB]) \ar[r]^{(p,1)} \ar[dr]_{1} & Ho(\C)(QI,[QA,RB]) \ar[d]^{(p^{-1},1)}\\
& & & Ho(\C)(I,[QA,RB]
}
\end{equation}
Each object above is of the form $Ho(\C)(X,Y)$ for $X$ cofibrant and $Y$ fibrant and we view each $Ho(\C)(X,Y)$ as the set of homotopy classes of maps from $X$ to $Y$.  The morphisms are of two kinds.  Firstly there are those of the form $Ho(\C)(f,1)$ or $Ho(\C)(1,f)$ for $f$ a morphism of $\C$.  Such morphisms respect the homotopy relation and we view them as acting on homotopy classes.  The other morphisms are of the form $v$ or $\phi$ and, because $A$ is cofibrant and $B$ fibrant, each occurence is well defined on homotopy classes.  Accordingly, to verify that the above diagram commutes it suffices to verify that each sub-diagram commutes. 
Now apart from the commutative triangle on the bottom right, each sub-diagram of \eqref{eq:vtriangleH} consists of a  diagram involving the hom-sets of $\C$, but with components viewed as acting on homotopy classes.  Since in $\C$ itself these sub-diagrams commute, by naturality or \eqref{eq:leftunit}, they certainly commute when viewed as acting on homotopy classes.  Therefore  \eqref{eq:vtriangleH} commutes.\\
According to ~\eqref{eq:rightunit} we must show that 
\begin{equation}\label{eq:runit2}
\cd{Ho(\C)(QA \otimes QI,B) \ar[d]_{\phi^{d}} \ar[dr]^{(r_{l},1)} \\
Ho(\C)(A,[QI,RB]) \ar[r]_<<<<{(1,i_{r})} & Ho(\C)(A,B)
}
\end{equation}
commutes for each $A$ and $B$.  Note that in $Ho(\C)$ the morphism $1 \otimes e:QA \otimes I \to QA \otimes QI$ is inverse to $1 \otimes p:QA \otimes QI \to QA \otimes I$.  Accordingly we can rewrite $r_{l}$ as 
\begin{equation*}
\cd{
A \ar[r]^{p^{-1}} & QA \ar[r]^{r} & QA \otimes I \ar[r]^{1 \otimes e} & QA \otimes QI}
\end{equation*} 
by substituting $1 \otimes e$ for $(1 \otimes p)^{-1}$.  The following diagram then establishes the commutativity of \eqref{eq:runit2}.
\begin{equation*}
\def\objectstyle{\scriptscriptstyle}
\def\labelstyle{\scriptscriptstyle}
\cd{
Ho(\C)(QA \otimes QI,B) \ar[r]^{(1 \otimes e,1)} \ar[d]_{(1,q)} & Ho(\C)(QA \otimes I,B) \ar[d]_{(1,q)} \ar[dr]^{(r,1)}\\
Ho(\C)(QA \otimes QI,RB) \ar[d]_{\phi} \ar[r]^{(1 \otimes e,1)} & Ho(\C)(QA \otimes I,RB)  \ar[dr]^{(r,1)} \ar[d]_{\phi} & Ho(\C)(QA,B) \ar[dr]^{(p^{-1},1)} \ar[d]_{(1,q)} \\
Ho(\C)(QA,[QI,RB]) \ar[d]_{(p^{-1},1)} \ar[r]^{(1,[e,1])} & Ho(\C)(QA,[I,RB]) \ar[d]_{(p^{-1},1)} \ar[r]_{(1,i)} \ar[r] & Ho(\C)(QA,RB)  \ar[dr]_{(p^{-1},1)} & Ho(\C)(A,B) \ar[d]_{(1,q)} \ar[dr]^{1} \\
Ho(\C)(A,[QI,RB]) \ar[r]_{(1,[e,1])} & Ho(\C)(A,[I,RB])  \ar[rr]_{(1,i)} && Ho(\C)(A,RB) \ar[r]_{(1,q^{-1})} & Ho(\C)(A,B)
}
\end{equation*}
Next we calculate that $t^{d}:[Q(QA \otimes QB),RC] \to [QA,R[QB,RC]]$ as constructed in~\eqref{eq:t} has value:
\begin{equation*}
\def\objectstyle{\scriptstyle}
\def\labelstyle{\scriptstyle}
\cd{
[Q(QA \otimes QB),RC] \ar[r]^<<<<<{[p,1]^{-1}} & [QA \otimes QB,RC] \ar[r]^{t} & [QA,[QB,RC]] \ar[r]^{[1,q]} & [QA,R[QB,RC]] .}
\end{equation*}
This calculation is given overleaf by the commutative diagram ~\eqref{eq:CalculateT}.  All sub-diagrams of ~\eqref{eq:CalculateT} commute in a routine manner.  Apart from basic naturalities we use the defining equation $t = [u,1] \circ L$ of \eqref{eq:t}, the equation $[1,p] \circ k = p$ of ~\eqref{eq:k0} and naturality of $k$ as in ~\eqref{eq:k4}.  Furthermore, on the bottom right corner, we use that the morphisms $[Qp_{A},1], [p_{QA},1]:[QA,R[QB,RC]] \rightrightarrows [[QQA,R[QB,RC]]$ coincide in $Ho(\C)$.  To see that this is so we argue as in the proof of Lemma ~\ref{Theorem:LemmaK}.  Namely, $p_{QA}$ and $Qp_{A}$ are left homotopic because they are coequalised by the trivial fibration $p_{A}$ in $\C$, and, since $R[QB,RC]$ is fibrant, the desired equality follows.\\
Finally, we use the calculation of $t^{d}$ to prove that the diagram
\begin{equation*}
\cd{
Ho(\C)(QA \otimes Q(QB \otimes QC),D) \ar[dd]_{\phi^{d}} \ar[rr]^{Ho(\C)(\alpha_{l},1)} && Ho(\C)(Q(QA \otimes QB) \otimes QC,D) \ar[d]^{\phi^{d}}\\
&& Ho(\C)(QA \otimes QB,[QC,RD]) \ar[d]^{\phi^{d}}\\
Ho(\C)(A,[QB \otimes QC,RD]) \ar[rr]_{Ho(\C)(1,t^{d})} && Ho(\C)(A,[QB,R[QC,RD]])
}
\end{equation*}
instantiating ~\eqref{eq:ass} commutes for all $A$, $B$, $C$ and $D$.  This is established overleaf in the large, but straightforward, commutative diagram \eqref{eq:LargeAss} whose only non-trivial step is an application of ~\eqref{eq:ass} in $\C$ itself.

\thispagestyle{empty}
\begin{landscape}
\begin{equation}\label{eq:CalculateT}
\def\objectstyle{\scriptscriptstyle}
\def\labelstyle{\scriptscriptstyle}
\cd{
[Q(QA \otimes QB),RC] \ar[ddr]_{1} \ar[dddddd]_{[p,1]^{-1}} \ar[r]^{[Qq,1]^{-1}} & [QR(QA \otimes QB),RC] \ar[r]^{L} \ar[dd]^{[Qq,1]} & [[QB,QR(QA \otimes QB)],[QB,RC]] \ar[dd]^{[[1,Qq],1]} \ar[r]^{[k,1]} &  [Q[QB,R(QA \otimes QB)],[QB,RC]] \ar[dd]^{[Q[1,q],1]} \ar[r]^{[1,q]} & [Q[QB,R(QA \otimes QB)],R[QB,RC]] \ar[dd]^{[Q[1,q],1]}\\\\
&  [Q(QA \otimes QB),RC] \ar[r]^{L} & [[QB,Q(QA \otimes QB)],[QB,RC]] \ar[r]^{[k,1]} &  [Q[QB,QA \otimes QB],[QB,RC]] \ar[r]^{[1,q]}  \ar[dd]^{[Qu,1]} & [Q[QB,QA \otimes QB],R[QB,RC]] \ar[dd]^{[Qu,1]}\\\\
&& [[QB,QA \otimes QB],[QB,RC]] \ar[dd]^{[u,1]} \ar[uu]_{[[1,p],1]} \ar[uur]^{[p,1]} &  [QQA,[QB,RC]] \ar[r]^{[1,q]} & [QQA,R[QB,RC]]\\\\
 [QA \otimes QB,RC] \ar[uuuur]^{[p,1]} \ar[uurr]^{L} \ar[rr]_{t} &&  [QA,[QB,RC]] \ar[uur]^{[p,1]} \ar[rr]_{[1,q]}  && [QA,R[QB,RC]] \ar[uu]|{[p_{QA},1]=[Qp_{A},1]}\\\\
}
\end{equation}
\end{landscape}

\thispagestyle{empty}
\begin{landscape}
\begin{equation}\label{eq:LargeAss}
\def\objectstyle{\scriptstyle}
\def\labelstyle{\scriptstyle}
\cd{Ho(\C)(QA \otimes Q(QB \otimes QC),D)\ar[dd]_{(1,q)} \ar[r]^{((1 \otimes p)^{-1},1)} & Ho(\C)(QA \otimes(QB \otimes QC),D)  \ar[dd]_{(1,q)} \ar[r]^{(\alpha,1)} & Ho(\C)((QA \otimes QB) \otimes QC,D)\ar[d]^{(1,q)} \ar[r]^{(p \otimes 1,1)} & Ho(\C)(Q(QA \otimes QB) \otimes QC,D) \ar[d]^{(1,q)} \\
& & Ho(\C)((QA \otimes QB) \otimes QC,RD) \ar[dd]_{\phi}  \ar[r]^{(p \otimes 1,1)}  & Ho(\C)(Q(QA \otimes QB) \otimes QC,RD) \ar[d]^{\phi}\\
Ho(\C)(QA \otimes Q(QB \otimes QC),RD) \ar[dd]_{\phi} \ar[r]^{((1 \otimes p)^{-1},1)} & Ho(\C)(QA \otimes (QB \otimes QC),RD) \ar[dd]_{\phi} \ar[ur]^{(\alpha,1)} & & Ho(\C)(Q(QA \otimes QB),[QC,RD]) \ar[d]^{(p^{-1},1)} \\
&& Ho(\C)(QA \otimes QB,[QC,RD]) \ar[dd]_{\phi} \ar[ur]^{(p,1)} \ar[r]^{1} \ar[dr]_{(1,q)} & Ho(\C)(QA \otimes QB,[QC,RD]) \ar[d]^{(1,q)}\\
Ho(\C)(QA,[Q(QB \otimes QC),RD])  \ar[r]^{(1,[p,1]^{-1})} \ar[dd]_{(p^{-1},1)} &  Ho(\C)(QA,[QB \otimes QC,RD])\ar[dd]_{(p^{-1},1)} \ar[dr]_{(1,t)} & & Ho(\C)(QA \otimes QB,R[QC,RD]) \ar[d]^{\phi} \\
& & Ho(\C)(QA,[QB,[QC,RD]]) \ar[r]^{(1,[1,q])} \ar[d]^{(p^{-1},1)} & Ho(\C)(QA,[QB,R[QC,RD]]) \ar[d]^{(p^{-1},1)}\\
Ho(\C)(A,[Q(QB \otimes QC),RD]) \ar[r]_{(1,[p,1]^{-1})}  & Ho(\C)(A,[QB \otimes QC,RD]) \ar[r]_{(1,t)} & Ho(\C)(A,[QB,[QC,RD]]) \ar[r]_{(1,[1,q])} & Ho(\C)(A,[QB,R[QC,RD]])
}
\end{equation}
\end{landscape}

\end{document}